\newcommand{\yo}{\text{\usefont{U}{min}{m}{n}\symbol{'210}}}
\DeclareFontFamily{U}{min}{}
\DeclareFontShape{U}{min}{m}{n}{<-> udmj30}{}
\setlist[enumerate,1]{label={(\arabic*)},itemsep=\parskip} 
\setlist[itemize,1]{itemsep=\parskip} 
\newlist{thmlist}{enumerate}{2}
\setlist[thmlist,1]{label={\em(\roman*)},ref={(\roman*)},%
  itemsep=\parskip,leftmargin=*,align=left}
\setlist[thmlist,2]{label={\em(\alph*)},ref={(\alph*)},%
  itemsep=\parskip,leftmargin=*,align=left,topsep=0.1cm}
\newlist{remlist}{enumerate}{2}
\setlist[remlist,1]{label={(\roman*)},ref={(\roman*)},itemsep=\parskip,%
  leftmargin=*,align=left}
\setlist[remlist,2]{label={(\alph*)},ref={(\alph*)},itemsep=\parskip,%
  leftmargin=*,align=left,topsep=0.1cm}
\let\c@equation\c@subsubsection
\newtheorem{cor}[subsubsection]{Corollary}
\newtheorem{lem}[subsubsection]{Lemma}
\newtheorem{prop}[subsubsection]{Proposition}
\newtheorem{conj}[subsubsection]{Conjecture}
\newtheorem{thm}[subsubsection]{Theorem}
\newtheorem*{claim*}{Claim}
\theoremstyle{definition}
\newtheorem{defn}[subsubsection]{Definition}
\newtheorem{quest}[subsubsection]{Question}
\newtheorem{rem}[subsubsection]{Remark}
\newtheorem{constr}[subsubsection]{Construction}
\newtheorem{exam}[subsubsection]{Example}
\renewcommand{\eqref}[1]{(\ref{#1})}
\newcommand{\nc}{\newcommand}
\nc{\renc}{\renewcommand}
\nc{\ssec}{\subsection}
\nc{\sssec}{\subsubsection}
\nc{\on}{\operatorname}
\nc{\term}[1]{#1\xspace}
\DeclareMathSymbol{A}{\mathalpha}{operators}{`A}
\DeclareMathSymbol{B}{\mathalpha}{operators}{`B}
\DeclareMathSymbol{C}{\mathalpha}{operators}{`C}
\DeclareMathSymbol{D}{\mathalpha}{operators}{`D}
\DeclareMathSymbol{E}{\mathalpha}{operators}{`E}
\DeclareMathSymbol{F}{\mathalpha}{operators}{`F}
\DeclareMathSymbol{G}{\mathalpha}{operators}{`G}
\DeclareMathSymbol{H}{\mathalpha}{operators}{`H}
\DeclareMathSymbol{I}{\mathalpha}{operators}{`I}
\DeclareMathSymbol{J}{\mathalpha}{operators}{`J}
\DeclareMathSymbol{K}{\mathalpha}{operators}{`K}
\DeclareMathSymbol{L}{\mathalpha}{operators}{`L}
\DeclareMathSymbol{M}{\mathalpha}{operators}{`M}
\DeclareMathSymbol{N}{\mathalpha}{operators}{`N}
\DeclareMathSymbol{O}{\mathalpha}{operators}{`O}
\DeclareMathSymbol{P}{\mathalpha}{operators}{`P}
\DeclareMathSymbol{Q}{\mathalpha}{operators}{`Q}
\DeclareMathSymbol{R}{\mathalpha}{operators}{`R}
\DeclareMathSymbol{S}{\mathalpha}{operators}{`S}
\DeclareMathSymbol{T}{\mathalpha}{operators}{`T}
\DeclareMathSymbol{U}{\mathalpha}{operators}{`U}
\DeclareMathSymbol{V}{\mathalpha}{operators}{`V}
\DeclareMathSymbol{W}{\mathalpha}{operators}{`W}
\DeclareMathSymbol{X}{\mathalpha}{operators}{`X}
\DeclareMathSymbol{Y}{\mathalpha}{operators}{`Y}
\DeclareMathSymbol{Z}{\mathalpha}{operators}{`Z}
\nc{\sA}{\ensuremath{\mathcal{A}}\xspace}
\nc{\sB}{\ensuremath{\mathcal{B}}\xspace}
\nc{\sC}{\ensuremath{\mathcal{C}}\xspace}
\nc{\sD}{\ensuremath{\mathcal{D}}\xspace}
\nc{\sE}{\ensuremath{\mathcal{E}}\xspace}
\nc{\sF}{\ensuremath{\mathcal{F}}\xspace}
\nc{\sG}{\ensuremath{\mathcal{G}}\xspace}
\nc{\sH}{\ensuremath{\mathcal{H}}\xspace}
\nc{\sI}{\ensuremath{\mathcal{I}}\xspace}
\nc{\sJ}{\ensuremath{\mathcal{J}}\xspace}
\nc{\sK}{\ensuremath{\mathcal{K}}\xspace}
\nc{\sL}{\ensuremath{\mathcal{L}}\xspace}
\nc{\sM}{\ensuremath{\mathcal{M}}\xspace}
\nc{\sN}{\ensuremath{\mathcal{N}}\xspace}
\nc{\sO}{\ensuremath{\mathcal{O}}\xspace}
\nc{\sP}{\ensuremath{\mathcal{P}}\xspace}
\nc{\sQ}{\ensuremath{\mathcal{Q}}\xspace}
\nc{\sR}{\ensuremath{\mathcal{R}}\xspace}
\nc{\sS}{\ensuremath{\mathcal{S}}\xspace}
\nc{\sT}{\ensuremath{\mathcal{T}}\xspace}
\nc{\sU}{\ensuremath{\mathcal{U}}\xspace}
\nc{\sV}{\ensuremath{\mathcal{V}}\xspace}
\nc{\sW}{\ensuremath{\mathcal{W}}\xspace}
\nc{\sX}{\ensuremath{\mathcal{X}}\xspace}
\nc{\sY}{\ensuremath{\mathcal{Y}}\xspace}
\nc{\sZ}{\ensuremath{\mathcal{Z}}\xspace}
\nc{\bA}{\ensuremath{\mathbf{A}}\xspace}
\nc{\bB}{\ensuremath{\mathbf{B}}\xspace}
\nc{\bC}{\ensuremath{\mathbf{C}}\xspace}
\nc{\bD}{\ensuremath{\mathbf{D}}\xspace}
\nc{\bE}{\ensuremath{\mathbf{E}}\xspace}
\nc{\bF}{\ensuremath{\mathbf{F}}\xspace}
\nc{\bG}{\ensuremath{\mathbf{G}}\xspace}
\nc{\bH}{\ensuremath{\mathbf{H}}\xspace}
\nc{\bI}{\ensuremath{\mathbf{I}}\xspace}
\nc{\bJ}{\ensuremath{\mathbf{J}}\xspace}
\nc{\bK}{\ensuremath{\mathbf{K}}\xspace}
\nc{\bL}{\ensuremath{\mathbf{L}}\xspace}
\nc{\bM}{\ensuremath{\mathbf{M}}\xspace}
\nc{\bN}{\ensuremath{\mathbf{N}}\xspace}
\nc{\bO}{\ensuremath{\mathbf{O}}\xspace}
\nc{\bP}{\ensuremath{\mathbf{P}}\xspace}
\nc{\bQ}{\ensuremath{\mathbf{Q}}\xspace}
\nc{\bR}{\ensuremath{\mathbf{R}}\xspace}
\nc{\bS}{\ensuremath{\mathbf{S}}\xspace}
\nc{\bT}{\ensuremath{\mathbf{T}}\xspace}
\nc{\bU}{\ensuremath{\mathbf{U}}\xspace}
\nc{\bV}{\ensuremath{\mathbf{V}}\xspace}
\nc{\bW}{\ensuremath{\mathbf{W}}\xspace}
\nc{\bX}{\ensuremath{\mathbf{X}}\xspace}
\nc{\bY}{\ensuremath{\mathbf{Y}}\xspace}
\nc{\bZ}{\ensuremath{\mathbf{Z}}\xspace}
\nc{\dA}{\ensuremath{\mathds{A}}\xspace}
\nc{\dB}{\ensuremath{\mathds{B}}\xspace}
\nc{\dC}{\ensuremath{\mathds{C}}\xspace}
\nc{\dD}{\ensuremath{\mathds{D}}\xspace}
\nc{\dE}{\ensuremath{\mathds{E}}\xspace}
\nc{\dF}{\ensuremath{\mathds{F}}\xspace}
\nc{\dG}{\ensuremath{\mathds{G}}\xspace}
\nc{\dH}{\ensuremath{\mathds{H}}\xspace}
\nc{\dI}{\ensuremath{\mathds{I}}\xspace}
\nc{\dJ}{\ensuremath{\mathds{J}}\xspace}
\nc{\dK}{\ensuremath{\mathds{K}}\xspace}
\nc{\dL}{\ensuremath{\mathds{L}}\xspace}
\nc{\dM}{\ensuremath{\mathds{M}}\xspace}
\nc{\dN}{\ensuremath{\mathds{N}}\xspace}
\nc{\dO}{\ensuremath{\mathds{O}}\xspace}
\nc{\dP}{\ensuremath{\mathds{P}}\xspace}
\nc{\dQ}{\ensuremath{\mathds{Q}}\xspace}
\nc{\dR}{\ensuremath{\mathds{R}}\xspace}
\nc{\dS}{\ensuremath{\mathds{S}}\xspace}
\nc{\dT}{\ensuremath{\mathds{T}}\xspace}
\nc{\dU}{\ensuremath{\mathds{U}}\xspace}
\nc{\dV}{\ensuremath{\mathds{V}}\xspace}
\nc{\dW}{\ensuremath{\mathds{W}}\xspace}
\nc{\dX}{\ensuremath{\mathds{X}}\xspace}
\nc{\dY}{\ensuremath{\mathds{Y}}\xspace}
\nc{\dZ}{\ensuremath{\mathds{Z}}\xspace}
\nc{\bbA}{\ensuremath{\mathbb{A}}\xspace}
\nc{\bbB}{\ensuremath{\mathbb{B}}\xspace}
\nc{\bbC}{\ensuremath{\mathbb{C}}\xspace}
\nc{\bbD}{\ensuremath{\mathbb{D}}\xspace}
\nc{\bbE}{\ensuremath{\mathbb{E}}\xspace}
\nc{\bbF}{\ensuremath{\mathbb{F}}\xspace}
\nc{\bbG}{\ensuremath{\mathbb{G}}\xspace}
\nc{\bbH}{\ensuremath{\mathbb{H}}\xspace}
\nc{\bbI}{\ensuremath{\mathbb{I}}\xspace}
\nc{\bbJ}{\ensuremath{\mathbb{J}}\xspace}
\nc{\bbK}{\ensuremath{\mathbb{K}}\xspace}
\nc{\bbL}{\ensuremath{\mathbb{L}}\xspace}
\nc{\bbM}{\ensuremath{\mathbb{M}}\xspace}
\nc{\bbN}{\ensuremath{\mathbb{N}}\xspace}
\nc{\bbO}{\ensuremath{\mathbb{O}}\xspace}
\nc{\bbP}{\ensuremath{\mathbb{P}}\xspace}
\nc{\bbQ}{\ensuremath{\mathbb{Q}}\xspace}
\nc{\bbR}{\ensuremath{\mathbb{R}}\xspace}
\nc{\bbS}{\ensuremath{\mathbb{S}}\xspace}
\nc{\bbT}{\ensuremath{\mathbb{T}}\xspace}
\nc{\bbU}{\ensuremath{\mathbb{U}}\xspace}
\nc{\bbV}{\ensuremath{\mathbb{V}}\xspace}
\nc{\bbW}{\ensuremath{\mathbb{W}}\xspace}
\nc{\bbX}{\ensuremath{\mathbb{X}}\xspace}
\nc{\bbY}{\ensuremath{\mathbb{Y}}\xspace}
\nc{\bbZ}{\ensuremath{\mathbb{Z}}\xspace}
\nc{\mrm}[1]{\ensuremath{\mathrm{#1}}\xspace}
\nc{\mbf}[1]{\ensuremath{\mathbf{#1}}\xspace}
\nc{\mcal}[1]{\ensuremath{\mathcal{#1}}\xspace}
\nc{\msc}[1]{\ensuremath{\mathscr{#1}}\xspace}
\renc{\bar}[1]{\overline{#1}}
\nc{\sub}{\subset}
\nc{\too}{\longrightarrow}
\nc{\hook}{\hookrightarrow}
\nc*{\hooklongrightarrow}{\ensuremath{\lhook\joinrel\relbar\joinrel\rightarrow}}
\nc{\hooklong}{\hooklongrightarrow}
\nc{\twoheadlongrightarrow}{\relbar\joinrel\twoheadrightarrow}
\nc{\shiso}{\approx}
\nc{\isoto}{\xrightarrow{\sim}}
\nc{\isofrom}{\xleftarrow{\sim}}
\renc{\ge}{\geqslant}
\renc{\le}{\leqslant}
\renc{\geq}{\geqslant}
\renc{\leq}{\leqslant}
\nc{\id}{\mathrm{id}}
\DeclareMathOperator{\Ker}{\mathrm{Ker}}
\DeclareMathOperator{\Coker}{\mathrm{Coker}}
\DeclareMathOperator{\Hom}{\mathrm{Hom}}
\nc{\uHom}{\underline{\smash{\Hom}}}
\DeclareMathOperator{\Maps}{\mathrm{Maps}}
\DeclareMathOperator{\End}{\mathrm{End}}
\nc{\Pre}{\mathrm{PSh}{}}
\nc{\Shv}{\mathrm{Shv}{}}
\nc{\uEnd}{\underline{\smash{\End}}}
\renc{\lim}{\operatorname*{lim}}
\nc{\colim}{\operatorname*{colim}}
\nc{\Cofib}{\on{Cofib}}
\nc{\Fib}{\on{Fib}}
\nc{\initial}{\varnothing}
\nc{\op}{\mathrm{op}}
\renc{\coprod}{\sqcup}
\nc{\bDelta}{\mbf{\Delta}}
\nc{\DM}{\mbf{DM}}
\nc{\eff}{\mathrm{eff}}
\nc{\veff}{\mathrm{veff}}
\nc{\cyc}{{\mrm{cyc}}}
\nc{\corr}{{\on{corr}}}
\nc{\ft}{\mrm{ft}}
\nc{\flf}{\mrm{flf}}
\nc{\fet}{{\mrm{f\acute et}}}
\nc{\fsyn}{{\mrm{fsyn}}}
\nc{\syn}{{\mrm{syn}}}
\nc{\lci}{{\mrm{lci}}}
\nc{\Perf}{\mbf{Perf}}
\nc{\perf}{\mrm{perf}}
\nc{\oblv}{\mrm{oblv}}
\nc{\exact}{\on{exact}}
\nc{\F}{{\on{F}}}
\nc{\clopen}{{\mrm{clopen}}}
\nc{\B}{\mrm{B}}
\nc{\D}{\mrm{D}}
\nc{\Fin}{\on{Fin}}
\nc{\fin}{\mrm{fin}}
\nc{\Cut}{\on{Cut}}
\nc{\Cart}{\on{Cart}}
\nc{\pairs}{\mathsf{pairs}}
\nc{\Pairs}{\mathrm{Pair}}
\nc{\Trip}{\mathrm{Trip}}
\nc{\Lab}{\mathrm{Lab}}
\nc{\SL}{\mathrm{SL}}
\nc{\coCart}{\mathrm{coCart}}
\nc{\RKE}{\mathrm{RKE}}
\nc{\strict}{\mathrm{strict}}
\nc{\Emb}{\mathrm{Emb}}
\nc{\Split}{\mathrm{Split}}
\nc{\Set}{\mathrm{Set}}
\nc{\sSets}{\mathrm{sSets}}
\nc{\pb}{\mathrm{pb}}
\nc{\fib}{\mathrm{fib}}
\nc{\diff}{\mrm{diff}}
\nc{\gp}{\mrm{gp}}
\nc{\map}{\mrm{map}}
\nc{\mgp}{\mrm{mot-gp}}
\nc{\FSyn}{\mrm{FSyn}}
\nc{\FEt}{\mrm{FEt}}
\nc{\Spc}{\mrm{Spc}}
\nc{\Ob}{\mrm{Ob}}
\nc{\Spt}{\mrm{Spt}}
\nc{\T}{\bT}
\nc{\suspinf}{\Sigma^\infty}
\nc{\h}{\mrm{h}}
\nc{\uhom}{\underline{\mathrm{Hom}}}
\nc{\umap}{\underline{\mathrm{Maps}}}
\renc{\H}{\bH}
\nc{\Einfty}{{\sE_\infty}}
\nc{\Eone}{{\sE_1}}
\nc{\Stab}{\mrm{Stab}}
\nc{\lax}{{\mrm{lax}}}
\nc{\cocart}{{\mrm{cocart}}}
\nc{\Sch}{\on{Sch}}
\nc{\Fr}{\on{Fr}}
\nc{\A}{\mathbf{A}}
\nc{\N}{\mathbf{N}}
\nc{\Z}{\mathbf{Z}}
\nc{\Q}{\mathbf{Q}}
\nc{\Oo}{\mathcal{O}} 
\nc{\Fscr}{\mathcal{F}}
\nc{\Gscr}{\mathcal{G}}
\nc{\Ll}{\mathcal{L}} 
\nc{\Mm}{\mathcal{M}} 
\nc{\mm}{\mathrm{m}} 
\nc{\K}{\mrm{K}} 
\nc{\W}{\mrm{W}} 
\nc{\red}{{\on{red}}}
\nc{\Voev}{{\on{Voev}}}
\nc{\Corr}{\mrm{Corr}}
\nc{\Span}{\mathbf{Corr}}
\nc{\Gap}{\mrm{Gap}}
\nc{\Corrfr}{\Corr^{\fr}}
\nc{\Corrvfr}{\Corr^{\Vfr}}
\nc{\Spec}{\on{Spec}}
\nc{\Sm}{\on{Sm}}
\nc{\Gm}{\mathbf{G}_{\on{m}}}
\renc{\P}{\bP}
\nc{\nis}{\mathrm{nis}}
\nc{\zar}{\mathrm{zar}}
\nc{\et}{\mathrm{\acute et}}
\nc{\all}{\mathrm{all}}
\nc{\fold}{\mathrm{fold}}
\nc{\Fun}{\mathrm{Fun}}
\nc{\Nat}{\mathrm{Nat}}
\nc{\Ho}{\mathrm{Ho}}
\nc{\Segal}{\mathrm{Segal}}
\nc{\Mon}{\mrm{Mon}{}}
\nc{\Ab}{\mrm{Ab}}
\nc{\Sh}{\on{Sh}}
\nc{\M}{\mrm{M}}
\nc{\Lhtp}{L_{\A^1}}
\nc{\Lmot}{L_{\mrm{mot}}}
\nc{\mot}{\mrm{mot}}
\nc{\SH}{\mbf{SH}}
\nc{\RR}{\mbf{R}}
\nc{\CC}{\mbf{C}}
\nc{\Mod}{\mbf{Mod}}
\nc{\QCoh}{\mbf{QCoh}}
\nc{\MonUnit}{\mbf{1}}
\nc{\tr}{\on{tr}}
\nc{\cotr}{\mrm{cotr}}
\nc{\vop}{\mrm{vop}}
\nc{\fr}{{\on{fr}}}
\nc{\Ar}{\mrm{Ar}}
\nc{\Vfr}{\on{Vfr}}
\nc{\frdiff}{{\on{frdiff}}}
\nc{\frGys}{\on{frGys}}
\nc{\SHfr}{\SH^{\fr}}
\nc{\SHfrdiff}{\SH^{\frdiff}}
\nc{\SHfrGys}{\SH^{\frGys}}
\nc{\InftyCat}{(\mathrm{\infty,1)\textnormal{-}Cat}}
\nc{\TriCat}{\mathrm{TriCat}}
\nc{\oneCat}{\mathrm{1\textnormal{-}Cat}}
\nc{\Cat}{\mathrm{Cat}}
\nc{\Th}{\on{Th}}
\nc{\CMon}{\mrm{CMon}{}}
\nc{\CAlg}{\mrm{CAlg}{}}
\nc{\MGL}{\mrm{MGL}}
\nc{\Seg}{\mrm{Seg}{}}
\nc{\GW}{\mrm{GW}{}}
\nc{\Tw}{\mrm{Tw}}
\nc{\sslash}{/\mkern-6mu/}
\nc{\PrL}{\mrm{Pr}^\mrm{L}}
\nc{\PrR}{\mrm{Pr}^\mrm{R}}
\nc{\pr}{\mrm{pr}}
\let\phi\varphi
\nc\efr{\mrm{efr}}
\nc\nfr{\mrm{nfr}}
\nc\dfr{\mrm{fr}}
\nc\tfr{\mrm{tfr}}
\nc\Vect{\mrm{Vect}}
\nc\sVect{\mrm{sVect}}
\nc{\fix}{\mrm{fix}}
\nc{\ho}{\mrm{h}}
\nc\Mfd{\mrm{Mfd}}
\nc{\PSh}{\mrm{PSh}}
\nc{\hzmw}{H \tilde\Z{}}
\nc{\Cor}{\mrm{Cor}{}}
\nc{\cormw}{\mrm{\widetilde{Cor}}{}}
\nc{\Chw}{\mrm{\widetilde{CH}}{}}
\nc{\Ex}{\mrm{Ex}}
\nc{\BM}{\mrm{BM}}
\nc{\Pic}{\mrm{Pic}}
\nc{\Br}{\mrm{Br}}
\nc{\pur}{\mathfrak p}
\nc{\angles}[1]{\langle #1\rangle}
\nc{\inv}[1]{[\tfrac{1}{#1}]}
\nc{\pinv}{\inv{p}}
\nc{\cinv}{\inv{p}}
\nc{\Sph}{\on{Sph}}
\nc{\KGL}{\mrm{KGL}}
\nc{\KH}{\mrm{KH}}
\nc{\Flag}{\mrm{Flag}}
\nc{\Pro}{\mrm{Pro}}
\nc{\Frac}{\mrm{Frac}}
\nc{\arc}{\mrm{arc}}
\nc{\rarc}{\mrm{rarc}}
\nc{\cdarc}{\mrm{cdarc}}
\nc{\vv}{\mrm{v}}
\nc{\rv}{\mrm{rv}}
\nc{\cdv}{\mrm{cdv}}
\nc{\hh}{\mrm{h}}
\nc{\cdh}{\mrm{cdh}}
\nc{\rh}{\mathrm{rh}}
\nc{\Et}{\mathrm{Et}}
\nc{\Nis}{\mathrm{Nis}}
\nc{\Zar}{\mathrm{Zar}}
\nc{\cdp}{\mathrm{cdp}}
\nc{\RZ}{\mathrm{RZ}}
\nc{\qcqs}{\mathrm{qcqs}}
\nc{\aff}{\mathrm{aff}}
\nc{\cl}{\mathrm{cl}}
\nc{\Val}{\mathrm{Val}}
\nc{\GFin}{\mathrm{GFin}{}}
\nc{\Proj}{\mathrm{Proj}}
\nc{\inftyCat}{\term{$\infty$-category}}
\nc{\inftyCats}{\term{$\infty$-categories}}
\nc{\inftyOneCat}{\term{$(\infty,1)$-category}}
\nc{\inftyOneCats}{\term{$(\infty,1)$-categories}}
\nc{\inftyGrpd}{\term{$\infty$-groupoid}}
\nc{\inftyGrpds}{\term{$\infty$-groupoids}}
\nc{\inftyTop}{\term{$\infty$-topos}}
\nc{\inftyTops}{\term{$\infty$-toposes}}
\nc{\inftyTwoCat}{\term{$(\infty,2)$-category}}
\nc{\inftyTwoCats}{\term{$(\infty,2)$-categories}}
\title{On nilpotent extensions of $\infty$-categories and the cyclotomic trace}
\author[E. Elmanto]{Elden Elmanto}
\address{Department of Mathematics\\
Harvard University\\
1 Oxford St.\
Cambridge, MA 02138\\
USA}
\email{\href{mailto:elmanto@math.harvard.edu}{elmanto@math.harvard.edu}}
\urladdr{\url{https://www.eldenelmanto.com/}}
\author[V. Sosnilo]{Vladimir Sosnilo}
\address{Laboratory ``Modern Algebra and Applications''\\
St. Petersburg State University\\
14th line, 29B\\
199178 Saint Petersburg\\
Russia}
\email{\href{mailto:vsosnilo@gmail.com}{vsosnilo@gmail.com}}
\begin{document}

\bibliographystyle{alphamod}

\begin{abstract} We do three things in this paper: (1)  study the analog of localization sequences (in the sense of algebraic $K$-theory of stable $\infty$-categories) for additive $\infty$-categories, (2) define the notion of nilpotent extensions for suitable $\infty$-categories and furnish interesting examples such as categorical square-zero extensions, and (3) use (1) and (2) to extend the Dundas-Goodwillie-McCarthy theorem for stable $\infty$-categories which are not monogenically generated (such as the stable $\infty$-category of Voevodsky's motives or the stable $\infty$-category of perfect complexes on some algebraic stacks). The key input in our paper is Bondarko's notion of weight structures which provides a ``ring-with-many-objects" analog of a connective $\mathbb{E}_1$-ring spectrum. As applications, we prove cdh descent results for truncating invariants of stacks extending the work of Hoyois-Krishna for homotopy $K$-theory, and establish new cases of Blanc's lattice conjecture.
\end{abstract}

\maketitle

\tableofcontents

\section{Introduction}

Every application of trace methods in algebraic $K$-theory goes through the celebrated Dundas-Goodwillie-McCarthy (DGM) theorem \cite{goodwillie, dundas, mccarthy, dgm-local}:

\begin{thm}[Dundas-Goodwillie-McCarthy]\label{thm:dgm} Let $A \rightarrow B$ be a morphism of connective $\mathbb{E}_1$-ring spectra such that the map $\pi_0(A) \rightarrow \pi_0(B)$ is surjective and the kernel is a nilpotent ideal of $\pi_0(A)$. Then the trace map from algebraic $K$-theory to topological cyclic homology ($TC$)
\[
\mrm{tr}:K \rightarrow TC
\]
induces a cartesian square
\[
\begin{tikzcd}
K(A) \ar{d} \ar{r} & TC(A) \ar{d}\\
K(B) \ar{r} & TC(B).
\end{tikzcd}
\]
\end{thm}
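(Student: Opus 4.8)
The plan is to follow the classical reduction-plus-Goodwillie-calculus strategy. Reading the hypothesis in its standard form, we may assume $\pi_0(A)\to\pi_0(B)$ is surjective with nilpotent kernel $I$, say $I^m=0$. Filtering $I$ by its powers $I\supseteq I^2\supseteq\cdots\supseteq I^m=0$ and lifting to connective $\mathbb{E}_1$-rings yields a tower $A=A_0\to A_1\to\cdots\to A_m=B$ in which each $A_j\to A_{j+1}$ is a square-zero extension, i.e.\ sits in a pullback exhibiting $A_j\simeq A_{j+1}\times_{A_{j+1}\oplus M_j[1]}A_{j+1}$ for a connective $A_{j+1}$-bimodule $M_j$. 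Since the square in the theorem is natural in the ring and a vertical stack of cartesian squares is cartesian, it suffices to treat one square-zero extension; by a further standard reduction we may take it split. Thus, fixing a connective $\mathbb{E}_1$-ring $R$, we must show the cyclotomic trace induces a cartesian square on $K$ and $TC$ of $R\oplus M\to R$ for every connective $R$-bimodule $M$. Writing $\widetilde K(R;M):=\mathrm{fib}\big(K(R\oplus M)\to K(R)\big)$ and $\widetilde{TC}(R;M):=\mathrm{fib}\big(TC(R\oplus M)\to TC(R)\big)$, viewed as functors of $M$ with a natural transformation $\widetilde{\mathrm{tr}}$ between them, this amounts to $\widetilde{\mathrm{tr}}$ being an equivalence on all connective $M$; both functors preserve filtered colimits and vanish at $M=0$, so Goodwillie calculus in the variable $M$ is available.

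The heart is the comparison of linearizations. By the Dundas--McCarthy identification of stable $K$-theory with topological Hochschild homology, the first derivative $D_1\widetilde K(R;-)$ is naturally $THH(R;-)$ (up to a suspension, depending on conventions), an exact and connectivity-preserving functor of $M$. Using the Nikolaus--Scholze formula for $TC$ as the fiber of $\mathrm{can}-\varphi\colon THH(R\oplus M)^{hS^1}\to THH(R\oplus M)^{tC_p}$, one checks that in this relative setting the $S^1$-homotopy fixed points and the Tate construction do not alter the first derivative, so $D_1\widetilde{TC}(R;-)$ is again $THH(R;-)$ and $D_1\widetilde{\mathrm{tr}}$ is the identity. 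Replaying the same computation with $R$ replaced by $R\oplus M$ shows that the fiber $F(R;M):=\mathrm{fib}(\widetilde{\mathrm{tr}})$ has vanishing derivative at every point.

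Finally one must pass from ``all derivatives vanish'' to ``$F\equiv 0$'', which is where the analyticity estimates enter. Goodwillie's theorem makes $\widetilde K(R;-)$ $1$-analytic --- essentially, $\widetilde K(R;M)$ is roughly $2n$-connected when $M$ is $n$-connected --- and one needs a matching bound for $\widetilde{TC}(R;-)$. On $THH$ this is immediate (it preserves connectivity), but controlling the homotopy fixed points and Tate/fixed-point constructions that assemble $TC$ is the genuine technical core; this is precisely the point at which McCarthy's original argument works $p$-adically and Dundas' theorem provides the gluing to the integral statement (modern accounts instead bound the relevant Tate constructions for cyclotomic spectra of connective rings directly). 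Granting this, both $\widetilde K(R;-)$ and $\widetilde{TC}(R;-)$ agree with their Taylor towers on connective inputs, those towers are matched stage by stage through $\widetilde{\mathrm{tr}}$, so $\widetilde{\mathrm{tr}}$ is an equivalence, and composing the resulting cartesian squares back up the tower of the first paragraph gives the theorem. I expect the two real obstacles to be (i) the identification of $D_1K$ with $THH$, and (ii) the connectivity control of relative $TC$ together with the verification that every comparison above is compatible with the cyclotomic trace; the $\infty$-categorical bookkeeping required to run calculus on $K$, $TC$, and $THH$ of connective $\mathbb{E}_1$-rings is by now routine.
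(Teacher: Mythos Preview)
The paper does not prove this theorem. It is stated in the introduction as the celebrated result of Dundas, Goodwillie, and McCarthy, with citations to the original sources, and is then used as a black box --- most notably at the end of the proof of Theorem~\ref{thm:main-add}, where it is invoked for the $\mathbb{E}_1$-rings $\mrm{end}_{\sA^{\mrm{big}}}(G)$ and $\mrm{end}_{\sA^{\mrm{big}}/\sA}(G)$. So there is no proof in the paper to compare your proposal against; the paper's contribution is to \emph{assume} this theorem and leverage it, via weight structures and the additive localization theory of Section~\ref{sec:additive}, to obtain the categorical generalizations in Theorems~\ref{thm:main-add} and~\ref{thm:dgm-weights}.

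For what it is worth, your sketch is recognizably the classical strategy, but one reduction is not correct as written: you cannot in general ``lift'' the $I$-adic filtration of $\pi_0$ to a tower $A=A_0\to\cdots\to A_m=B$ of connective $\mathbb{E}_1$-rings with square-zero steps. That filtration lives on the \emph{discrete} rings $\pi_0 A\to\pi_0 B$; the passage from $A$ to $\pi_0 A$ (and from $B$ to $\pi_0 B$) is handled separately, via Postnikov towers together with the analyticity estimates you invoke later. The subsequent claim ``by a further standard reduction we may take it split'' is also not a reduction in the usual sense: one does not reduce a non-split square-zero extension to a split one prior to the calculus argument, but rather establishes analyticity directly and then computes the derivatives, which are indeed expressed through split extensions. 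Your closing paragraph correctly identifies where the real work lies, namely the Dundas--McCarthy identification of $D_1 K$ with $THH$ and the connectivity control on relative $TC$; those are precisely the inputs the paper is content to import from the cited literature.
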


The goal of this paper is to extend Theorem~\ref{thm:dgm} to a more general setting. To motivate our results, let us recall that $TC$ and $K$-theory are both instances of {\bf localizing invariants}: functors from the $\infty$-category of small stable idempotent complete $\infty$-categories and exact functors to the $\infty$-category of spectra:
\[
\on{E}: \mbf{Cat}_{\infty}^{\perf} \rightarrow \Spt,
\]
which convert exact sequences\footnote{An exact sequence in $\mbf{Cat}_{\infty}^{\perf}$ is a sequence $\sA \rightarrow \sB \rightarrow \sC$ whose composite is zero and the induced functor $\sB/\sA \rightarrow \sC$ is an equivalence after idempotent completion.}  in $\mbf{Cat}_{\infty}^{\perf}$ to cofiber sequences in spectra. In particular, both invariants are defined, more generally, on $\mbf{Cat}_{\infty}^{\perf}$ rather than just on ring spectra. We are motivated by the following general question:

\begin{quest}\label{quest:dgm} Fix $f: \sA \rightarrow \sB,$ an exact functor of stable $\infty$-categories. Is the induced square:
\[
\begin{tikzcd}
K(\sA) \ar{d} \ar{r} & TC(\sA) \ar{d}\\
K(\sB) \ar{r} & TC(\sB).
\end{tikzcd}
\]
cartesian? Equivalently, setting
\[
K^{\mrm{inv}}:= \mrm{Fib}(K \xrightarrow{\mrm{tr}} TC),
\]
is $f$ converted to an equivalence after taking $K^{\mrm{inv}}$?
\end{quest}

Theorem~\ref{thm:dgm} provides a large collection of functors for which the answer to Question~\ref{quest:dgm} is positive. Namely, functors of the form
\[
-\otimes_A B:\Perf_A \rightarrow \Perf_B,
\]
induced by a map $A \rightarrow B$ satisfying the hypotheses of Theorem~\ref{thm:dgm}. Here $\Perf_A \subset \mbf{RMod}_A$ is the subcategory of perfect objects in the $\infty$-category of right $A$-modules.

Since we only know positive answers to Question~\ref{quest:dgm} when the domain and codomain of $f$ are of the above form, some constraints are placed on these categories. Indeed, if $\sC \in \mbf{Cat}_{\infty}^{\perf}$, we pass to the $\infty$-category of ind-objects to obtain a presentable, stable $\infty$-category $\mrm{Ind}(\sC)$. In this situation, an important theorem due to Schwede and Shipley \cite{SchwedeShipley2003} (we use its formulation from \cite[Theorem 7.1.2.1]{HA}) asserts that $\mrm{Ind}(\sC)$ is of the form $\mbf{RMod}_R$ (and thus, taking compact objects, gives an equivalence $\sC \simeq \Perf_R$)
as soon as there exists an object $X \in \sC$ which satisfies the following condition:
\begin{itemize}
\item if $Y \in \mrm{Ind}(\sC)$ is an object such that $\mrm{Ext}_{\sC}^n(X, Y) = 0$ for all $n \in \bbZ$ then $Y \simeq 0$.
\end{itemize}
The $\mathbb{E}_1$-ring spectrum $R$ is then obtained as the endomorphism spectrum of the object $X$. In particular, Theorem~\ref{thm:dgm} is, thus far, only applicable to categories with a single compact generator.
%
%
%
%
%
%
%
On the other hand, there are at least a couple of scenarios of interest in algebraic geometry, where this is not the case.

\begin{enumerate}
\item Let $X$ be a quotient stack $[\Spec R/G]$ where $G$ is a reasonably ``good" group scheme (e.g. embeddable, linearly reductive group) and $R$ a commutative ring with an action of $G$. We are interested in the $\infty$-category of perfect complexes on $X$ which, in the ``good" situations above, coincides with the subcategory of compact objects of the derived $\infty$-category of quasicoherent sheaves:
\[
\Perf_{X} \simeq (\bD_{\mrm{qc}, X})^{\omega}.
\]
In this case, the derived $\infty$-category of quasicoherent sheaves on $X$ admits generators given by
$G$-equivariant bundles in the image of the base change functor
\[
f^*:\bD_{\mrm{qc}, BG} \rightarrow \bD_{\mrm{qc}, X}.
\]
Therefore, $\Perf_{X}$ is generated as a stable subcategory of $\bD_{\mrm{qc}, X}$ closed under retracts, by a \emph{set} of $G$-vector bundles on $\Spec R$ but not necessarily by any single vector bundle. We refer the reader to Section~\ref{ssec:stacks-weights} for a review.
\item Let $R$ be a ring and $k$ be a field. We have Voevodsky's $\infty$-category of motives with coefficients in $R$, denoted by $\DM(\Spec k; R)$, which contains the subcategory of {\bf geometric motives}
\[
\DM_{\mrm{gm}}(\Spec k, R) \subset \DM(\Spec k, R).
\]
The latter is generated as a stable subcategory closed under retracts, by objects
\[
\{ M(X)(q): X\text{ is a smooth $k$-scheme}, q \in \bbZ \}.
\]
The $\infty$-category $\DM_{\mrm{gm}}(\Spec k, R)$ coincides with the compact objects of $ \DM(\Spec k, R)$ and the set of objects above are compact generators of $\DM(\Spec k, R)$. We refer the reader to Section~\ref{sec:motivic} for a review. This is also the case for other ``motivic categories" such as the relative versions of $\DM$, modules over various motivic ring spectra or constructible $\ell$-adic sheaves. 
\end{enumerate}

%
%

In these cases, the $K$-theory and $TC$ are not the $K$-theory and $TC$ of an $\mathbb{E}_1$-ring spectrum and thus trace methods and Theorem~\ref{thm:dgm} do not directly apply. Our primary goal in this paper is to improve this situation. To state our main result, recall the following definition from \cite[Definition 3.1]{Land_2019}:

\begin{defn}\label{def:truncating} Let $\on{E}: \mbf{Cat}_{\infty}^{\perf} \rightarrow \Spt$ be a localizing invariant, and set $\on{E}(R) := \on{E}(\Perf_R)$. Then $E$ is said to be {\bf truncating} if for all connective $\mathbb{E}_1$-ring spectra $R$, the canonical map $R \rightarrow \pi_0(R)$ induces an equivalence:
\[
\on{E}(R) \xrightarrow{\simeq} \on{E}(\pi_0(R)).
\]
\end{defn}

Thanks to Theorem~\ref{thm:dgm}, the fiber of the cyclotomic trace
\[
K^{\mrm{inv}}:= \mrm{Fib}(K \xrightarrow{\mrm{tr}} TC),
\]
is an example of a truncating invariant and is of primary interest in this paper. Our main result produces a large collection of functors in $\mbf{Cat}_{\infty}^{\perf}$ which are taken to equivalences after applying any truncating invariant and thus provides positive answers to Question~\ref{quest:dgm}. The conditions we put on these functors depend on an auxiliary choice of weight structures (in the sense of Bondarko) on both the domain and the target, and these functors are called {\bf nilpotent extensions of boundedly weighted stable $\infty$-categories} (see Definition~\ref{def:nilp-stab} for details). We display these functors as:
\[
f:(\sA, w) \rightarrow (\sB, w').
\]

\begin{thm}[Theorem~\ref{thm:dgm-weights}] \label{thm:main} Let $f:(\sA, w) \rightarrow (\sB, w')$ be a nilpotent extension of boundedly weighted stable $\infty$-categories and $E$ be a truncating invariant. Then the induced map
\[
\on{E}(\sA) \rightarrow \on{E}(\sB),
\]
is an equivalence.
\end{thm}

As already mentioned, $K^{\mrm{inv}}$ is a truncating invariant. Therefore, we obtain the following corollary, which is a generalization of Theorem~\ref{thm:dgm}:

\begin{cor}\label{cor:multdgm} Let $f:(\sA, w) \rightarrow (\sB, w)$ be a nilpotent extension of boundedly weighted stable $\infty$-categories. Then the diagram
\[
\begin{tikzcd}
K(\sA) \ar{d} \ar{r} & TC(\sA) \ar{d}\\
K(\sB) \ar{r} & TC(\sB)
\end{tikzcd}
\]
is a cartesian square.
\end{cor}

Let us now elaborate on what Corollary~\ref{cor:multdgm} has to say about the two scenarios we have
explained above.

%
%

%

\begin{enumerate}
\item If $E$ is a localizing invariant and $R$ is a connective $\bbE_{\infty}$-ring with a $G$-action, we set
\[
\on{E}^G(R) := \on{E}(\Perf_{[\Spec R/G]}),
\]
which is a version of $G$-equivariant $E$-theory. To sharpen our discussions, we fix a base discrete commutative ring $k$ and we let $G$ be a linearly reductive, embeddable group scheme over $k$; these notions regarding groups schemes will be reviewed in Section~\ref{ssec:stacks-weights}. If $R$ is an $\bbE_{\infty}$-$k$-algebra (for example, a commutative, discrete $k$-algebra) with a $G$-action, then the $\infty$-category $\Perf_{[\Spec R/G]}$ is equipped with a canonical weight structure. Any $G$-equivariant\footnote{In the sense that $G$ acts on $\Spec R$ and $\Spec S$ as spectral $k$-schemes and the map $\Spec S \rightarrow \Spec R$ is $G$-equivariant.} map of connective $\bbE_{\infty}$-$k$-algebras $R \rightarrow S$, whose underlying map is a nilpotent extension in the sense that the kernel of
\[
\pi_0(R) \rightarrow \pi_0(S)
\]
is a nilpotent ideal, induces a functor
\[
\Perf_{[\Spec R/G]} \rightarrow \Perf_{[\Spec S/G]},
\]
which is an example of a nilpotent extension of boundedly weighted stable $\infty$-categories. Corollary~\ref{cor:multdgm} then yields:


\begin{cor}[See Example~\ref{exam:nilextension-of-stacks}]\label{cor:stacks} As above, the square  \[
  \begin{tikzcd}
  K^G(R) \ar{d} \ar{r} & TC^G(R) \ar{d}\\
  K^G(S) \ar{r} & TC^G(S)
  \end{tikzcd}
  \]
  is cartesian.
\end{cor}

\item Let $k$ be a field of exponential characteristic $e$ and $R$ a ring of coefficients. Under the assumption that $e$ is invertible in the ring $R$ (e.g. $R = \bbZ[\tfrac{1}{e}]$), the subcategory of geometric motives is generated as a stable subcategory closed under retracts, by the smaller collection
\[
\{ M(X)(q): X\text{ is a smooth, \emph{proper} $k$-scheme}, q \in \bbZ \}.
\]
In this situation, $\DM_{\mrm{gm}}(k, R)$ admits a bounded weight structure and admits a {\bf weight complex functor}
\[
 \DM_{\mrm{gm}}(k, R) \rightarrow K^b(\mbf{Chow}(k,R)),
\]
where the target, the category of bounded complexes of classical Chow motives with coefficients in $R$, $\mbf{Chow}(k,R)$, is also equipped with a canonical weight structure. This functor is also an example of a nilpotent extension of boundedly weighted stable $\infty$-categories. Corollary~\ref{cor:multdgm} then yields:

\begin{cor}[See Example~\ref{exam:chow-weight-complex}]  \label{cor:mot}
  Let $k$ be a field of exponential characteristic $e$ and $R$ a ring of coefficients such that $e$ is invertible in $R$. Then the square
  \[
  \begin{tikzcd}
  K(\DM_{\mrm{gm}}(k,R)) \ar{d} \ar{r} & TC(\DM_{\mrm{gm}}(k,R)) \ar{d}\\
  K(K^b(\mbf{Chow}(k,R))) \ar{r} & TC(K^b(\mbf{Chow}(k,R)))
  \end{tikzcd}
  \]
  is cartesian.
\end{cor}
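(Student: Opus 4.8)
The plan is to deduce the statement from \thmref{thm:main} by exhibiting the weight complex functor as a nilpotent extension of boundedly weighted $\infty$-categories; this is the content of \examref{exam:chow-weight-complex}. First I would invoke Bondarko's Chow weight structure \cite{bondarko-weights}: $\DM_{\mrm{gm}}(k)[1/c]$ carries a bounded weight structure $w$ whose heart is the idempotent-complete additive category $\mbf{Chow}(k)[1/c]$ of Chow motives with $\mathbf{Z}[1/c]$-coefficients. In characteristic $0$ this is Bondarko's original construction; in characteristic $p > 0$ one inverts the exponential characteristic $c = p$ and uses de Jong's alterations in place of resolution of singularities (Bondarko, Kelly), which is exactly why the statement is phrased with $[1/c]$. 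Since $\DM_{\mrm{gm}}(k)[1/c]$ is idempotent-complete its weight heart is too, so both $(\DM_{\mrm{gm}}(k)[1/c], w)$ and $\mbf{Chow}(k)[1/c]$ — the latter treated as a boundedly weighted $\infty$-category concentrated in weight zero — lie inside the framework of the paper; the $K$-theory and $TC$ appearing in the statement are those of the additive category $\mbf{Chow}(k)[1/c]$, equivalently of its bounded homotopy $\infty$-category, the two agreeing by Gillet--Waldhausen and, for $TC$, by Morita invariance of topological Hochschild homology.

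Next I would take the comparison map to be the enhanced weight complex functor $\DM_{\mrm{gm}}(k)[1/c] \to \mbf{Chow}(k)[1/c]$ (with values in the bounded homotopy $\infty$-category of the heart). It is weight-exact by construction and restricts to the identity on hearts; since the induced functor on hearts is then an equivalence, with trivial kernel, it is a nilpotent extension of boundedly weighted $\infty$-categories in the sense of the paper — the analogue of the truncation $\mathcal{R} \to \pi_0\mathcal{R}$ of a connective $\mathbb{E}_1$-ring, for which the kernel on $\pi_0$ also vanishes. \thmref{thm:main} then applies verbatim and produces the asserted cartesian square; the same theorem also shows that every truncating invariant $E$ in the sense of \cite{Land_2019} (for instance the fibre of the cyclotomic trace) sends this functor to an equivalence.

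The step I expect to be the real obstacle is not this formal deduction but the foundational input that the weight complex functor can be upgraded from Bondarko's construction — which a priori is only well-defined after passing to a quotient — to an honest functor of stable $\infty$-categories compatible with the weight structures, so that it may be fed into \thmref{thm:main}. This rests on the Chow weight structure on $\DM_{\mrm{gm}}(k)[1/c]$ admitting a compatible enhancement, which is available because the category is generated, under shifts, cones, and retracts, by the motives $M(X)$ of smooth projective $k$-schemes together with their Tate twists, all of which lie in the heart. A secondary, bookkeeping-level point is to confirm that the positive-characteristic constructions of the Chow weight structure and of $\mbf{Chow}(k)[1/c]$ over a possibly imperfect base field really do yield a bounded weight structure with the stated heart; this is the one place where the hypothesis on the exponential characteristic is genuinely needed.
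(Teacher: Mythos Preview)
Your overall strategy matches the paper's: exhibit the weight complex functor as a nilpotent extension of boundedly weighted $\infty$-categories and invoke \thmref{thm:dgm-weights}. However, there is a genuine conceptual slip in your identification of the heart. The weight heart of $\DM_{\mrm{gm}}(k)[1/c]$ is \emph{not} the classical $1$-category $\mbf{Chow}(k)[1/c]$ but the additive $\infty$-category $\mbf{Chow}_{\infty}(k)[1/c]$ of \defref{def:chow}, whose mapping spectra have nontrivial higher homotopy given by higher Chow groups (see the computation following \defref{def:chow} and \lemref{lem:connective}). Consequently the weight complex functor does \emph{not} restrict to the identity on hearts: on hearts it is the truncation $\mbf{Chow}_{\infty}(k)[1/c] \to h\mbf{Chow}_{\infty}(k)[1/c] = \mbf{Chow}(k)[1/c]$, exactly the nilpotent extension of \examref{exam:hocat}. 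Your own analogy with $\mathcal{R} \to \pi_0\mathcal{R}$ is the correct picture; but note that if the heart map really were an equivalence, then by \thmref{thm:vova} the entire weight complex functor would be an equivalence and the corollary would be vacuous. The content of the result lies precisely in the gap between $\mbf{Chow}_{\infty}$ and $\mbf{Chow}$.

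Your concern about ``upgrading'' Bondarko's weight complex functor to an honest exact functor of stable $\infty$-categories is legitimate but is dissolved by the paper's machinery rather than confronted head-on. By \thmref{thm:vova} one has $\DM_{\mrm{gm}}(k)[1/c] \simeq (\mbf{Chow}_{\infty}(k)[1/c])^{\fin}$, and the weight complex functor is then simply $(-)^{\fin}$ applied to the additive functor $\mbf{Chow}_{\infty}(k)[1/c] \to h\mbf{Chow}_{\infty}(k)[1/c]$; no separate lifting argument is needed. With this correction and this observation your argument goes through and coincides with the paper's.
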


\end{enumerate}

\sssec{Other applications} In \cite{Hoyois_2019}, Hoyois and Krishna proved that homotopy $K$-theory satisfies cdh descent for a wide class of stacks using equivariant motivic homotopy theory. This generalizes the result of Cisinski in the case of schemes \cite{Cisinski}, which also appealed to motivic homotopy theory. In \cite[Corollary A.5]{Land_2019}, a proof of cdh descent for homotopy $K$-theory was obtained, in the context of schemes, using vastly different methods. They proved that, more generally, any truncating invariant satisfies cdh descent on schemes \cite[Theorem E]{Land_2019}.
Their ideas were further developed in \cite{BKRS} to show that any localizing invariant satisfies more subtle pro-cdh descent for a wide class of stacks.
Combining Theorem~\ref{thm:dgm-weights} and \cite[Theorem~C]{BKRS} we were able to prove that any truncating invariant satisfies cdh descent in the same context, generalizing the work of Hoyois and Krishna. This application is discussed in Section~\ref{sec:apps}.

In a different context, working over the complex numbers, we were able to apply our results to the lattice conjecture in the context of noncommutative Hodge theory as pioneered by Katzarkov, Kontsevich and Pantev \cite{kkp}. This conjecture asks that the topological $K$-theory of a smooth proper dg-category\footnote{Incarnated in this paper as a $\bbC$-linear stable $\infty$-category; see \cite{cohn} for a comparison.}, in the sense of Blanc \cite{blanc}, admits a Hodge structure. We establish this conjecture for a wide class of derived algebraic stacks, based on the corresponding result for classical algebraic stacks due to \cite{dans-hodge} and a recent result of Konovalov \cite{konavalov}. This application is discussed in Section~\ref{app:latt}.

\ssec{Conventions}\label{sec:conv} We use relatively standard terminology on $\infty$-categories following \cite{HTT,HA,SAG}. Additionally, we use the following notation/convention:
\begin{enumerate}
\item By a {\bf category} we really do mean a $1$-category and an $\infty$-category will be called as such.
\item if $\sC$ is an $\infty$-category we write $\sC^{\simeq} \subset \sC$ for its core aka maximal subgroupid.
\item if $\sC$ is a stable $\infty$-category, then for each $x, y \in \sC$ we denote its mapping spectrum by $\mrm{maps}(x,y)$, while $\Maps(x,y)$ denotes the underlying mapping spaces so that $\Omega^{\infty}\mrm{maps}(x,y) \simeq \Maps(x,y)$; the relationship between the notation $\mrm{end}(x)$ and $\mrm{End}(x)$ is analogous.
\item If $\sC$ is an $\infty$-category with finite coproducts, then we write
\[
\Pre_{\Sigma}(\sC):= \Fun^{\times}(\sC^{\op}, \Spc),
\]
for the $\infty$-category of functors which convert finite coproducts to products.
\end{enumerate}

\ssec{Acknowledgements} The first author would like to thank Lars Hesselholt for bringing to his attention the problem of extending the DGM theorem to categories, as well as his guidance over the years. We would further like to thank Ben Antieau, Sveta Makarova, Charanya Ravi and Lior Yanovski for comments on an earlier draft, Mikhail Bondarko for informing us about Drinfeld quotients, Denis-Charles Cisinski for helpful discussions on localization of additive $\infty$-categories, and Andrei Konovalov for discussions on the lattice conjecture. Special thanks must go to the very patient referee who took the painstaking step of dissecting our first draft (with over a hundred comments!) and recommended numerous improvements. The results were obtained with the support of Russian Science Foundation grant 20-41-04401.

\section{On additive $\infty$-categories} \label{sec:additive} In this section, we develop more thoroughly some aspects of the theory of additive $\infty$-categories, such as their localization and idempotent completion. We keep in mind the analogs of these concepts for stable $\infty$-categories and how they interact with localizing invariants. A result which might be of independent interest is a Schwede-Shipley-style recognition principle for additive $\infty$-categories, given in Theorem~\ref{thm:Morita-theory}. This result feeds into the proof of our main result. We finally explain how Bondarko's theory of weight structures builds a ``bridge" between additive and stable $\infty$-categories.

%
%

\ssec{Aspects of additive $\infty$-categories}

Recall that an additive category is, in particular, enriched in the category of abelian groups. Hence,
if $x \in \mrm{Obj}(\sA)$ then $\Hom(x,x)$ is naturally an associative ring with $\circ$ acting as the
multiplication. From this point of view, it is natural to view an additive category as the
generalization of an associative ring. In higher algebra, the analog of an additive category is an
additive $\infty$-category \cite{ggn}, \cite[Appendix C.1.5]{SAG}.

\begin{defn}\label{def:nilp-add}
A {\bf semi-additive $\infty$-category} (also often called {\bf preadditive $\infty$-category}) $\sA$ is a \emph{pointed} $\infty$-category with finite products and coproducts such that for any pair of objects $x, y$, the canonical map
\[
x \coprod y \rightarrow x \times y,
\]
is an equivalence. We write this object as the {\bf sum} $x \oplus y$. The sum admits a {\bf shear map}
\[
s=(\pi_1, \nabla): x \oplus x \rightarrow x \oplus x,
\] where $\pi_1$ is the first projection and $\nabla$ is the fold map. If this map is an equivalence for all $x \in \mrm{Obj}(\sA)$, then we say that $\sA$ is an {\bf additive $\infty$-category}. An {\bf additive functor} of additive $\infty$-categories is a functor $F:\sA \rightarrow \sB$ that preserves zero objects and all sums.
\end{defn}

We refer to \cite[Section 2]{ggn} for a more extensive discussion and the following result which is \cite[Proposition 2.8]{ggn}:

\begin{prop}\label{prop:ggn} Let $\sC$ be an $\infty$-category with finite coproducts and products. Then the following are equivalent:
\begin{enumerate}
\item the $\infty$-category $\sC$ is additive,
\item the homotopy category $h\sC$ is additive,
\item the forgetful functor
\[
\mrm{Gp}_{\mathbb{E}_{\infty}}(\sC) \rightarrow \sC
\]
is an equivalence.
\end{enumerate}
\end{prop}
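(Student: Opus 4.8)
The plan is to read the three conditions as assertions that certain canonical maps are equivalences, and then to transport those assertions between $\sC$, its homotopy category $h\sC$, and the $\infty$-category $\CMon(\sC)$ of commutative monoids in $\sC$. For $(1)\Leftrightarrow(2)$ I would first recall two elementary facts: a morphism of $\sC$ is an equivalence if and only if its image in $h\sC$ is an isomorphism, and $h\sC$ has finite products and coproducts created by the functor $\sC\to h\sC$ (for coproducts because $\pi_0$ preserves finite products of spaces, and dually for products). Unwinding \defref{def:nilp-add}, additivity of $\sC$ is the conjunction of three families of conditions: that $\varnothing\to\ast$ is an equivalence; that for all $x,y$ the canonical map $x\coprod y\to x\times y$ is an equivalence (this presupposes the zero object produced by the first condition); and that for all $x$ the shear map $s=(\pi_1,\nabla)\colon x\oplus x\to x\oplus x$ is an equivalence (this presupposes the semi-additivity produced by the first two). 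Each such map is built by the same universal recipe in $\sC$ and in $h\sC$, hence is the image of the other under $\sC\to h\sC$; running through the three families in order then shows $\sC$ is additive if and only if $h\sC$ is.

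For $(1)\Leftrightarrow(3)$ I would bring in the forgetful functor $U\colon\CMon(\sC)\to\sC$ together with the full subcategory $\mrm{Gp}_{\mathbb{E}_{\infty}}(\sC)\subseteq\CMon(\sC)$ of group-like objects, and use three standard inputs about $\CMon$ and $\mrm{Gp}_{\mathbb{E}_{\infty}}$ of an $\infty$-category with finite products, all imported from \cite[Section~2]{ggn}: (a) $\mrm{Gp}_{\mathbb{E}_{\infty}}(\sD)$ is always additive; (b) if $\sD$ is semi-additive then $U$ is an equivalence, with inverse the functor equipping each object $x$ with the commutative monoid structure whose multiplication is the fold map $\nabla\colon x\oplus x\to x$; and (c) a commutative monoid $M$ is group-like exactly when $(\pi_1,m)\colon M\times M\to M\times M$ is an equivalence, with $m$ the multiplication. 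Granting these: for $(1)\Rightarrow(3)$, an additive $\sC$ is in particular semi-additive, so by (b) the functor $U$ identifies $\CMon(\sC)$ with $\sC$, and under this identification (b) and (c) show that the group-like objects are precisely the $x$ for which $(\pi_1,\nabla)=s$ is an equivalence, i.e.\ all of them by additivity; hence $\mrm{Gp}_{\mathbb{E}_{\infty}}(\sC)=\CMon(\sC)\simeq\sC$. For $(3)\Rightarrow(1)$, if $\mrm{Gp}_{\mathbb{E}_{\infty}}(\sC)\to\sC$ is an equivalence then $\sC$ is equivalent to an additive $\infty$-category by (a), and an equivalence preserves finite products, finite coproducts, and the initial and terminal objects, hence carries the comparison maps of \defref{def:nilp-add} for $\mrm{Gp}_{\mathbb{E}_{\infty}}(\sC)$ — which are equivalences — to those for $\sC$; so $\sC$ is additive.

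The step I expect to be the actual obstacle is input (b), the identification $\CMon(\sC)\simeq\sC$ for semi-additive $\sC$, where the coherence of the commutative monoid structure assembled from the biproduct must be checked; I would not reprove this but cite it from \cite{ggn}. Everything else is bookkeeping: recognizing the shear map of \defref{def:nilp-add} as the group-completion obstruction — that is, input (c) specialized to $m=\nabla$ — and verifying that the relevant canonical maps are preserved by $\sC\to h\sC$ and by equivalences.
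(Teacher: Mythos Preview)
Your sketch is correct and is essentially the argument one finds in \cite[Section~2]{ggn}; note, however, that the paper does not give its own proof of this proposition but simply records it as \cite[Proposition~2.8]{ggn}. So there is nothing to compare against: your proposal reconstructs the standard proof, with the only nontrivial input (the equivalence $\CMon(\sC)\simeq\sC$ for semi-additive $\sC$) correctly flagged and cited.
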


Our first goal is to develop a theory of exact sequences of additive $\infty$-categories. We will
consider the $\infty$-category
\[
\mbf{Cat}^{\mrm{add}}_{\infty}
\]
of \emph{small} additive $\infty$-categories and additive functors between them.

\sssec{Verdier quotients of additive $\infty$-categories} \label{sec:add-verd} To develop the theory of exact sequences of additive $\infty$-categories we will follow the case of stable $\infty$-categories analyzed in \cite[Section 5]{blumberg2013universal} and \cite[Chapter I]{nikolaus-scholze}. In the $1$-categorical context, this was studied by the the second author and Bondarko in \cite{bondarko-vova}.

Before we proceed, let us gather the necessary ingredients. First, for a small $\infty$-category $\sC$ and  a collection of arrows $W \subset \sC$ we can form the $\infty$-category $\sC[W^{-1}]$ equipped with a functor $\gamma: \sC \rightarrow \sC[W^{-1}]$ which is the universal $\infty$-category under $\sC$ where all elements in $W$ are invertible (more precisely, see the universal property \cite[Definition 7.1.2]{cisinski-rl}). This localization exists and is unique up to equivalence of $\infty$-categories under $\sC$ \cite[Proposition 7.1.3]{cisinski-rl}. Moreover, the functor can be chosen to be identity on objects \cite[Remark 7.1.4]{cisinski-rl}. Furthermore, the functor $h(\sC) \rightarrow h(\sC[W^{-1}])$ witnesses $h(\sC[W^{-1}])$ as a localization of the category $h(\sC)$ in the sense of ordinary category theory (as reviewed, for example, in \cite[Definition 2.2.8]{cisinski-rl}).

Second, we need a passage from ``small categories" to ``big categories." For motivation, let us recall that in the context of stable $\infty$-categories what one might mean by this is the passage to ind-completion. For $\sC, \sD$ stable $\infty$-categories, we write $\Fun^{\mrm{ex}}(\sC, \sD)$ for the $\infty$-category of functors which are {\bf exact}, i.e., preserve finite limits and colimits. If $\sC$ is a small stable $\infty$-category, then the Yoneda functor
\[
\yo:\sC \rightarrow  \Fun^{\mrm{ex}}(\sC^{\op}, \Spt),
\]
identifies the codomain with $\mrm{Ind}(\sC)$; see, for example, \cite[Proposition 3.2]{blumberg2013universal}. The upshot of going to $\mrm{Ind}(\sC)$ is that the resulting $\infty$-category is a stable presentable $\infty$-category. Therefore localizations of this $\infty$-category can often be constructed via adjoint functor theorems \cite[Corollary 5.5.2.9]{HTT}. The theory of localization sequences in algebraic $K$-theory as pioneered by Thomason \cite{TT}, Neeman \cite{Neeman}, and later in \cite{blumberg2013universal} is developed by passing to large $\infty$-categories first before going back to small categories by taking compact objects and the functor
\[
\sC \rightarrow \mrm{Ind}(\sC)^{\omega},
\]
then witnesses the idempotent completion of $\sC$ \cite[Lemma 5.4.2.4]{HTT}.

For additive $\infty$-categories, by passage from the ``small categories" to ``big categories" we mean the passage to Quillen's nonabelian derived categories through which the Yoneda embedding factors:
\[
\yo:\sC \rightarrow \Pre_{\Sigma}(\sC).
\]
In this case, $ \Pre_{\Sigma}(\sC)$ is a prestable presentable $\infty$-category. The basic properties of this construction are summarized in the next lemma.

\begin{lem}\label{lem:basic} Let $\sC$ be a small $\infty$-category with finite coproducts. Consider the Yoneda functor
\[
\yo:\sC \rightarrow \Pre_{\Sigma}(\sC) \subset \Fun(\sC^{\op}, \Spc).
\]
Then:
\begin{enumerate}
\item the $\infty$-category $\sC$ is additive if and only if $\Pre_{\Sigma}(\sC)$ is prestable.
\item in this case, we have an equivalence:
\[
 \Pre_{\Sigma}(\sC) \simeq \Fun^{\times}(\sC^{\op},\Spt_{\geq 0}), and
\]
the $\infty$-category $ \Pre_{\Sigma}(\sC)$ is a presentable prestable $\infty$-category.
\end{enumerate}
\end{lem}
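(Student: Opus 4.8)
The plan is to reduce all three clauses to the single equivalence
\begin{equation*}
\sC \text{ is additive} \iff \Pre_{\Sigma}(\sC) \text{ is additive},
\end{equation*}
and then feed this into \propref{prop:ggn} together with the recognition principle $\mrm{Gp}_{\mathbb{E}_{\infty}}(\Spc)\simeq\Spt_{\geq 0}$ (see \cite{ggn}). First I would record the standard facts about $\yo\colon\sC\to\Pre_{\Sigma}(\sC)$ for $\sC$ with finite coproducts (\cite[Section 5.5.8]{HTT}): $\yo$ is fully faithful, preserves finite coproducts, and preserves finite products (Yoneda preserves all limits and $\Pre_{\Sigma}(\sC)\subset\Pre(\sC)$ is closed under limits); $\Pre_{\Sigma}(\sC)$ is generated under sifted colimits by the representables and is closed in $\Pre(\sC)$ under sifted colimits, with those colimits and all products computed pointwise; and $\Pre_{\Sigma}(\sC)$ is an accessible localization of $\Pre(\sC)$, hence presentable and in particular cocomplete.

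For the easy half of the displayed equivalence I would argue formally: the comparison map $x\coprod y\to x\times y$ and the shear $s\colon x\oplus x\to x\oplus x$ of $\sC$ are determined by the universal properties of finite (co)products, so the finite‑product‑ and coproduct‑preserving functor $\yo$ carries them to the corresponding maps for $\yo(x),\yo(y)$ in $\Pre_{\Sigma}(\sC)$; if $\Pre_{\Sigma}(\sC)$ is additive those are equivalences, hence so are the originals by full faithfulness, and $\sC$ is additive. The same argument shows that if $\Pre_{\Sigma}(\sC)$ is prestable then $\sC$ is additive, since every prestable $\infty$-category is additive \cite[Appendix C]{SAG}.

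For the converse I would run a density argument. Assuming $\sC$ additive, on $\Pre_{\Sigma}(\sC)$ the bifunctor $(-)\times(-)$ preserves sifted colimits in each variable (products and sifted colimits are pointwise and sifted colimits commute with finite products in $\Spc$), while $(-)\coprod(-)$ preserves all colimits in each variable. As $\Pre_{\Sigma}(\sC)$ is generated under sifted colimits by the representables, the natural transformation $X\coprod Y\to X\times Y$ — a map of functors preserving sifted colimits in each variable — is an equivalence once it is so on pairs of representables, where it is $\yo$ applied to $x\coprod y\to x\times y$ and hence an equivalence by additivity of $\sC$. Thus $\Pre_{\Sigma}(\sC)$ is semiadditive, so $\oplus$ too is pointwise on sifted colimits, and the identical reduction shows its shear maps are equivalences; hence $\Pre_{\Sigma}(\sC)$ is additive.

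Granting this, I would deduce $(2)$ from
\begin{equation*}
\Fun^{\times}(\sC^{\op},\Spt_{\geq 0})\simeq\Fun^{\times}(\sC^{\op},\mrm{Gp}_{\mathbb{E}_{\infty}}(\Spc))\simeq\mrm{Gp}_{\mathbb{E}_{\infty}}(\Pre_{\Sigma}(\sC))\simeq\Pre_{\Sigma}(\sC),
\end{equation*}
where the first step is the recognition principle, the middle step uses that $\mrm{Gp}_{\mathbb{E}_{\infty}}(-)$ is cut out by limit conditions (so it commutes with forming functor $\infty$-categories, and the product-preserving condition is unaffected since $\mrm{Gp}_{\mathbb{E}_{\infty}}(\Spc)\to\Spc$ preserves and detects products) together with $\Pre_{\Sigma}(\sC)=\Fun^{\times}(\sC^{\op},\Spc)$, and the last step is \propref{prop:ggn} for the additive $\infty$-category $\Pre_{\Sigma}(\sC)$. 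For $(3)$: $\Spt_{\geq 0}$ is presentable and prestable, hence so is $\Fun(\sC^{\op},\Spt_{\geq 0})$ with the pointwise structure, and $\Fun^{\times}(\sC^{\op},\Spt_{\geq 0})$ is the full subcategory of product-preserving functors, closed under all limits and — because in $\Spt_{\geq 0}$ finite products coincide with finite coproducts and so commute with all colimits — under all colimits; a full subcategory of a presentable prestable $\infty$-category closed under limits and colimits is again presentable and prestable. With $(2)$ and the two implications above this yields $(3)$ and all of $(1)$. The one genuinely non-formal point is the converse in the third paragraph: coproducts in $\Pre_{\Sigma}(\sC)$ are not computed pointwise, so additivity of $\Pre_{\Sigma}(\sC)$ cannot be checked objectwise and the sifted-colimit/density argument is essential — and one must be careful to check the shear only once semiadditivity is already established, so that $\oplus$ is known to commute with sifted colimits.
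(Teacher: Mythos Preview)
Your argument is correct. The paper's proof, by contrast, is a bare citation to \cite[Proposition~C.1.5.7 and Remark~C.1.5.9]{SAG}, so you are essentially supplying a self-contained proof of what the paper defers to Lurie. Your key reduction---that $\sC$ is additive if and only if $\Pre_{\Sigma}(\sC)$ is additive, proved by a sifted-colimit density argument---is exactly the substance behind those SAG references, and your deduction of (2) via $\mrm{Gp}_{\mathbb{E}_{\infty}}(\Spc)\simeq\Spt_{\geq 0}$ and \propref{prop:ggn} is the natural route. The only place to be slightly more explicit is the claim that a full subcategory of a presentable prestable $\infty$-category closed under limits and colimits is again prestable: this follows by checking the axioms of \cite[Definition~C.1.2.1/Proposition~C.1.2.2]{SAG} directly (the pullback along $Y\to\Sigma Z$ can be formed in the ambient category, lies in the subcategory by limit-closure, and is a pushout there because pushouts are inherited), but it is worth saying so rather than asserting it. The tradeoff is clear: the paper's citation is terse but opaque, while your argument is longer but makes the mechanism visible and keeps the paper independent of a somewhat deep reference in SAG.
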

\begin{proof}
See \cite[Proposition~C.1.5.7~and~Remark~C.1.5.9]{SAG}.
\end{proof}

One consequence of this is an enhancement of the Yoneda functor.

\begin{cor}\label{cor:s-yoneda} Let $\sA$ be an additive $\infty$-category. The Yoneda functor $\sA \rightarrow \Pre_{\Sigma}(\sA)$ canonically promotes to a functor
\[
\sA \rightarrow \Fun^{\times}(\sA^{\op}, \Spt),
\]
which is fully faithful.
\end{cor}

\begin{proof} The functor is given by:
\[
\sA \rightarrow  \Pre_{\Sigma}(\sA)\simeq \Fun^{\times}(\sA^{\op},\Spt_{\geq 0}) \hookrightarrow  \Fun^{\times}(\sA^{\op},\Spt),
\]
where the equivalence is given by Lemma~\ref{lem:basic}(2) and the second functor is induced by the inclusion of connective spectra into all spectra. The functor in question is therefore a composite of two fully faithful functors.
\end{proof}
We use notation
\[
 \widehat{\sA}:=\Fun^{\times}(\sA^{\op}, \Spt),
 \]
 and call the resulting $\infty$-category the $\infty$-category of {\bf additive spectral presheaves}.

We now address the theory of Verdier quotients in the context of additive $\infty$-categories using the nonabelian derived category in analogy with the development of Verdier quotients for stable $\infty$-categories using $\mrm{Ind}$-completions as in \cite[Section 5]{blumberg2013universal}; see also \cite[Section I.3]{nikolaus-scholze}.

From hereon, we adopt the following notation. If $\sA$ is an additive $\infty$-category and $x$ is an object of $\sA$, then the functor represented by $x$:
\[
\Maps_{\sA}(-,x): \sA^{\op} \rightarrow \Spc,
\]
is a product-preserving functor and hence an object of $\Pre_{\Sigma}(\sA)$. By Lemma~\ref{lem:basic}(2), this functor
canonically upgrades to a functor into connective spectra. We denote this functor by
\[
\mbf{Maps}_{\sA}(-,x):\sA^{\op} \rightarrow \Spt_{\geq 0},
\]
and set
\[
\mbf{End}_{\sA}(x) := \mbf{Maps}_{\sA}(x,x).
\]
In what follows also note that cofibers of maps of connective spectra can be computed in spectra.

\begin{thm}\label{thm:exist-quot} Let $\sA$ be a small additive $\infty$-category and $\sB \subset \sA$ be an additive full subcategory. Then
\begin{enumerate}
\item Define
\[
W := \{ f \oplus g\colon b\oplus a \rightarrow b' \oplus a' \text{ where } f\colon b \rightarrow b' \in \sB \text{ and } g: a \rightarrow a'\text{ is invertible} \}.
\]
$\sA/\sB := \sA[W^{-1}]$ is an additive $\infty$-category equipped with an additive functor
\[
\overline{(-)}:\sA \rightarrow \sA/\sB
\] which is a bijection on the sets of objects.
\item If $\sE$ is another additive $\infty$-category, then $\overline{(-)}:\sA \rightarrow \sA/\sB$ induces an equivalence between $\Fun^{\times}(\sA/\sB, \sE)$ and the full subcategory of $\Fun^{\times}(\sA, \sE)$ consisting of those additive functors $G:\sA \rightarrow \sE$ satisfying $G|_{\sB} \simeq 0$.
\item We have the following commutative diagram:
\[
\begin{tikzcd}
\sA \ar{r}{\overline{(-)}} \ar{d} & \sA/\sB \ar{d}\\
\Pre_{\Sigma}(\sA) \ar{r}{L} & \Pre_{\Sigma}(\sA/\sB),
\end{tikzcd}
\]
where the bottom horizontal functor is left adjoint to the functor
\[
\mrm{res}_{\mrm{can}} :\Pre_{\Sigma}(\sA/\sB)  \rightarrow\Pre_{\Sigma}(\sA)  \qquad F \mapsto F \circ\overline{(-)}.
\]

\item If $\sB$ is generated under sums and retracts by a single object $z$, then for any
$F \in \Pre_{\Sigma}(\sA)$ we have:
\[
LF(-) \simeq \Cofib(F(z) \otimes_{\mbf{End}_{\sA}(z)} \mbf{Maps}_{\sA}(-,z) \to F(-)).
\]

\item
In general for any
$F \in \Pre_{\Sigma}(\sA)$ there is an equivalence:
\[
LF(-) \simeq \Cofib(\colim\limits_{z_1,\cdots, z_n \in \sB} F(\bigoplus\limits_{i=1}^n z_i) \otimes_{\mbf{End}_{\sA}(\bigoplus\limits_{i=1}^n z_i)} \mbf{Maps}_{\sA}(-,\bigoplus_{i=1}^n z_i) \to F(-)),
\]
where the colimit is taken over the poset of finite subsets $\{z_1,\cdots, z_n\}$ of objects of $\sB$.
\end{enumerate}
\end{thm}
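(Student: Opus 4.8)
The plan is to carry out everything inside the presentable prestable $\infty$-category $\Pre_{\Sigma}(\sA)\simeq\Fun^{\times}(\sA^{\op},\Spt_{\geq 0})$; write $\yo(z):=\Maps_{\sA}(-,z)$ for the object of $\Pre_{\Sigma}(\sA)$ corepresented by $z$. A short computation — using product-preservation to split off the invertible summand $g$ from any $f\oplus g\in W$, and that an additive presheaf kills zero endomorphisms — shows that $F\in\Pre_{\Sigma}(\sA)$ is $\yo(W)$-local if and only if $F|_{\sB}\simeq 0$; call this full subcategory $\mathcal{Y}$. Since evaluation at an object preserves all limits and colimits, $\mathcal{Y}$ is closed under all limits and all colimits in $\Pre_{\Sigma}(\sA)$ and is presentable, so the inclusion admits a left adjoint $L$, the accessible localization at the set $\yo(W)$; since $\mathcal{Y}$ is moreover the right orthogonal of a localizing subcategory closed under extensions, $L$ is left exact and $\mathcal{Y}$ is again presentable prestable (\cite[Appendix~C]{SAG}). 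Using that $W$ is closed under $\oplus$ one then checks that $\overline{(-)}\colon\sA\to\sA[W^{-1}]=:\sA/\sB$ — which exists and is bijective on objects by \cite[Remark~7.1.4]{cisinski-rl} — is semiadditive, that $\Pre_{\Sigma}(-)$ carries $\overline{(-)}$ to $L$, i.e.\ $\Pre_{\Sigma}(\sA/\sB)\simeq\mathcal{Y}$ compatibly with the Yoneda embeddings, and that $L\circ\yo_{\sA}\simeq\yo_{\sA/\sB}\circ\overline{(-)}$. Granting this, items (1)–(3) are formal: $L\circ\yo_{\sA}$ preserves finite coproducts and $\yo_{\sA/\sB}$ is fully faithful, so $\sA/\sB$ has finite (co)products, which agree; the Lemma above, applied to the prestable $\Pre_{\Sigma}(\sA/\sB)$ (together with the $1$-categorical additive-quotient results of \cite{bondarko-vova} for the homotopy category), shows $\sA/\sB$ is additive and $\overline{(-)}$ additive, which is (1); (2) is the universal property of $\sA[W^{-1}]$ restricted to product-preserving functors combined with "inverts $W$" $\Leftrightarrow$ "kills $\sB$"; and (3) is the identification $\Pre_{\Sigma}(\sA/\sB)\simeq\mathcal{Y}$ together with $L\dashv\mrm{res}_{\mrm{can}}$.

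Items (4) and (5) require a functorial model of $L$. Fix $z\in\sB$. Evaluation $\mrm{ev}_{z}\colon\Pre_{\Sigma}(\sA)\to\Mod_{\End(z)}(\Spt_{\geq 0})$ preserves colimits, so has a left adjoint sending a module $M$ to $M\otimes_{\End(z)}\yo(z)$ and the free module $\End(z)$ to $\yo(z)$. Let $C^{z}_{F}:=\Cofib\bigl(F(z)\otimes_{\End(z)}\yo(z)\to F\bigr)$ be the cofiber of the counit. First, $C^{z}_{F}(z)\simeq\Cofib(F(z)\xrightarrow{\sim}F(z))\simeq 0$, and since $C^{z}_{F}$ preserves finite products and every object of the full subcategory generated by $z$ under sums and retracts is a retract of some $z^{\oplus n}$, $C^{z}_{F}$ vanishes on that subcategory. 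Second, the zero endomorphism of $z$ lies in $W$ (it is $f\oplus g$ with $f=0\in\sB$ and $g=\id_{0}$), so $L$ inverts $\yo(0)=0\colon\yo(z)\to\yo(z)$, forcing $L\yo(z)\simeq 0$; as $L$ preserves colimits, in particular the bar construction, $L\bigl(F(z)\otimes_{\End(z)}\yo(z)\bigr)\simeq F(z)\otimes_{\End(z)}L\yo(z)\simeq 0$. Applying $L$ to the cofiber sequence $F(z)\otimes_{\End(z)}\yo(z)\to F\to C^{z}_{F}$ yields $LF\simeq LC^{z}_{F}$, and when $\sB$ is generated by $z$ under sums and retracts the first observation gives $C^{z}_{F}\in\mathcal{Y}$, hence $LC^{z}_{F}\simeq C^{z}_{F}$; this is (4). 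For (5) one runs the argument over the filtered poset of finite subsets $S=\{z_{1},\dots,z_{n}\}\subseteq\Ob(\sB)$, putting $z_{S}:=\bigoplus_{i}z_{i}\in\sB$ and $C_{F}:=\Cofib\bigl(\colim_{S}F(z_{S})\otimes_{\End(z_{S})}\yo(z_{S})\to F\bigr)\simeq\colim_{S}C^{z_{S}}_{F}$. For fixed $b\in\sB$ the subposet $\{S:b\in S\}$ is cofinal and $b$ lies in the subcategory generated by $z_{S}$ for such $S$, so $C^{z_{S}}_{F}(b)\simeq 0$, whence $C_{F}(b)\simeq 0$ and $C_{F}\in\mathcal{Y}$; and since each $z_{S}\in\sB$, the same acyclicity argument gives $L\bigl(\colim_{S}F(z_{S})\otimes_{\End(z_{S})}\yo(z_{S})\bigr)\simeq 0$. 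Applying $L$ to the cofiber sequence then yields the stated formula.

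I expect the main obstacle to lie in (4)–(5): exhibiting the model $C_{F}$ and verifying its two defining properties — that it is $\sB$-local, which rests on $C_{F}$ being product-preserving together with the retract description of the subcategories generated by the $z_{S}$, and that its "$\sB$-part" $\colim_{S}F(z_{S})\otimes_{\End(z_{S})}\yo(z_{S})$ is $L$-acyclic, which rests on the small but essential fact that $0\in W$, so $L\yo(z)=0$ — together with, in the general case, the cofinality bookkeeping that lets the pointwise vanishing survive the colimit over finite subsets. A secondary point needing care, underlying the additivity in (1) and the square in (3), is the identification $\Pre_{\Sigma}(\sA/\sB)\simeq\mathcal{Y}$, i.e.\ that the nonabelian derived category functor carries $\overline{(-)}$ to the localization $L$; this is where the closure of $W$ under $\oplus$ is genuinely used.
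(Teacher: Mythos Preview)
Your argument is correct, and for (1)--(3) it coincides with the paper's once you note --- as you do --- that the cleanest way to get additivity of $\sA/\sB$ is via the $1$-categorical result of \cite{bondarko-vova} together with Proposition~\ref{prop:ggn}; your attempt to instead bootstrap (1) from the identification $\Pre_{\Sigma}(\sA/\sB)\simeq\mathcal{Y}$ is mildly circular (defining $\Pre_{\Sigma}(\sA/\sB)$ already presupposes finite coproducts in $\sA/\sB$), but since you also invoke \cite{bondarko-vova} this is harmless. The genuine divergence is in (4): the paper verifies directly that $L'F:=\Cofib(F(z)\otimes_{\End(z)}\yo(z)\to F)$ is a Bousfield localization by assembling a $3\times 3$ grid of cofiber sequences to show that $L'(\alpha_F)$ and $\alpha_{L'F}$ are equivalences and then invoking \cite[Proposition~5.2.7.4]{HTT}, whereas you split the problem into ``$C^z_F$ is already $\sB$-local'' (it vanishes on $z$, hence on retracts of its powers) and ``the fiber is $L$-acyclic'' (since $L\yo(z)\simeq 0$). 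Your decomposition is more transparent and carries over verbatim to (5) through the cofinality argument you spell out, which the paper leaves as a one-line reduction; the paper's route has the small advantage of never needing to construct $L$ abstractly before identifying it with the explicit formula.

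One slip: you write that $\mrm{ev}_z$ ``preserves colimits, so has a left adjoint''. Preserving colimits yields a \emph{right} adjoint; the left adjoint $M\mapsto M\otimes_{\End(z)}\yo(z)$ that you describe exists because $\mrm{ev}_z$ preserves limits (equivalently, because it is restriction, with left Kan extension as left adjoint). This does not affect the rest of your argument.
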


\begin{proof}
To prove assertion (1) it suffices to show that the the functor on homotopy categories
$$h(\sA) \to h(\sA[W^{-1}])$$
is an additive functor of additive categories by Proposition~\ref{prop:ggn}. This follows from \cite[Proposition 2.2.4]{bondarko-vova}.

By the universal property of the localization, $\overline{(-)}:\sA \rightarrow \sA/\sB$ induces an equivalence between $\Fun^{\times}(\sA/\sB, \sE)$ and the full subcategory of $\Fun^{\times}(\sA, \sE)$ consisting of those additive functors $G:\sA \rightarrow \sE$ for which $G(w)$ is an equivalence for all $w \in W$.
By definition of $W$ and additivity of $G$ the latter condition is equivalent to
 $G(g)$ being an equivalence for any $g\colon b \to b' \in \sB$ which is also equivalent to:
 $G(b) \simeq 0$ for all $b\in \sB$. This proves (2).

To prove assertion (3), we start with the following general observations about an additive functor of $\infty$-categories $\pi:\sA_0 \rightarrow \sA_1$. The restriction of a product-preserving functor $\left(\sA_1\right)^{\op} \rightarrow \Spc$ along $\pi$ remains product-preserving since $\pi$ is additive. Therefore we have a commutative diagram
\[
\begin{tikzcd}
\Pre_{\Sigma}(\sA_1)  \ar{r}{\pi^*} \ar[hook]{d} &\Pre_{\Sigma}(\sA_0)  \ar[hook]{d}\\
\Pre(\sA_1) \ar{r} & \Pre(\sA_0),
\end{tikzcd}
\]
where $\pi^*$ is given by precomposing with $\pi$ \cite[Proposition 5.5.8.10(3)]{HTT}. The functor $\pi^*$ admits a left adjoint $\pi_!$ \footnote{Using, for example, the adjoint functor theorem \cite[Theorem 5.5.2.9]{HTT} where the accessibility hypothesis on $\pi^*$ and the presentability hypotheses on the domain and target $\infty$-categories follows from \cite[Proposition 5.5.8.10(1)]{HTT}.} which, by the commutativity of the diagram of right adjoints above, is characterized by the commutativity of the corresponding diagram of left adjoints:
\[
\begin{tikzcd}
\Pre(\sA_0)  \ar{r}{\pi_!} \ar[swap]{d}{L_{\Sigma}} &\Pre(\sA_1)  \ar{d}{L_{\Sigma}}\\
\Pre_{\Sigma}(\sA_0) \ar{r} & \Pre_{\Sigma}(\sA_1).
\end{tikzcd}
\]
Using the universal property of $\Pre_{\Sigma}$ \cite[Proposition 5.5.8.15]{HTT}, we also get that $\pi_!$ is characterized by the commutativity of
\[
\begin{tikzcd}
\sA_0 \ar{r}{\mrm{\pi}} \ar{d} & \sA_1 \ar{d}\\
\Pre_{\Sigma}(\sA_0) \ar{r}{\pi_!} & \Pre_{\Sigma}(\sA_1).
\end{tikzcd}
\]
These observations then apply to the additive functor
\[
\overline{(-)}:\sA \rightarrow \sA/\sB,
\]
which proves the assertion in (3).

To prove the fourth claim denote by $L'(-)$ the functor given by the formula
on the right. There is also a natural transformation
$\id_{\Pre_{\Sigma}(\sA)} \stackrel{\alpha}\to L'.$
It suffices to prove that $L'$ is a localization onto the subcategory of all $G \in \Pre_{\Sigma}(\sA)$ such that $G|_{\sB}=0$.
Note that the two maps
$$F(z) \otimes_{\mbf{End}_{\sA}(z)} \mbf{Maps}_{\sA}(z,z) \otimes_{\mbf{End}_{\sA}(z)} \mbf{Maps}_{\sA}(-,z) \longrightarrow
F(z) \otimes_{\mbf{End}_{\sA}(z)} \mbf{Maps}_{\sA}(-,z)$$
given by
$$
f\otimes e \otimes e' \mapsto fe \otimes e'
\text{ and }
f\otimes e \otimes e' \mapsto f \otimes ee'
$$
are equivalences for any $F$, so by construction we have a commutative diagram in $\Fun^{\times}(\sC^{\op},\Spt_{\geq 0})$:
\[
\begin{tikzcd}
F(z) \otimes_{\mbf{End}_{\sA}(z)} \mbf{Maps}_{\sA}(z,z) \otimes_{\mbf{End}_{\sA}(z)} \mbf{Maps}_{\sA}(-,z) \arrow[d, "\simeq"] \arrow[r, "\simeq"]
& F(z) \otimes_{\mbf{End}_{\sA}(z)} \mbf{Maps}_{\sA}(-,z) \arrow[d]\arrow[r] & 0\arrow[d]
\\
F(z)\otimes_{\mbf{End}_{\sA}(z)} \mbf{Maps}_{\sA}(-,z) \arrow[d]\arrow[r]
& F(-) \arrow[d, "\alpha_F"]\arrow[r,"\alpha_F"]
& L'F(-)\arrow[d, "\alpha_{L'(F)}"]
\\
0\arrow[r]
 &L'F(-)\arrow[r, "L'(\alpha)"]
& L'L'F(-)
\end{tikzcd}
\]
whose rows and columns are cofiber sequences. Hence $L'(\alpha)(F)$ and $\alpha_{L'(F)}$ are equivalences.
Now by \cite[Proposition~5.2.7.4]{HTT} $L'$ is a localization onto its image.
The map
$$
F(z) \otimes_{\mbf{End}_{\sA}(z)} \mbf{Maps}_{\sA}(z,z) \longrightarrow F(z)
$$
is an equivalence, so $L'F(z) = 0$ and hence the image of $L'$ is conatined in the image of $L$.
Moreover, for any $G \in \Pre_{\Sigma}(\sA)$ such that $G|_{\sB}=0$
$$G(z) \otimes_{\mbf{End}_{\sA}(z)} \mbf{Maps}_{\sA}(z,z) = 0,$$
so $G \simeq L'(G)$ is in the image of $L'$.

To prove the last claim it suffices to show that the functor
$$\mrm{res}_{\mrm{can}} \circ L : \Pre_{\Sigma}(\sA) \to \Pre_{\Sigma}(\sA/\sB) \to \Pre_{\Sigma}(\sA),$$
is a colimit of functors
$$
\Pre_{\Sigma}(\sA) \stackrel{L_S}\to \Pre_{\Sigma}(\sA/\langle z_1, \cdots, z_n\rangle) \stackrel{\mrm{res}_{\mrm{can}, S}}\to \Pre_{\Sigma}(\sA),
$$
over all finite subsets $S=\{z_1, \cdots, z_n\} \subset \sB$.
Since for any $z \in \sB$ there exists $S$ such that $L_SF(z) = 0$, the colimit lands in the image of $L$:
\[
L':\PSh_{\Sigma}(\sA) \rightarrow L(\PSh_{\Sigma}(\sA)) \simeq \PSh_{\Sigma}(\sA/\sB);
\]
here the equivalence is due to part (2).

Now, since $\mrm{res}_{\mrm{can}, S}$ is right adjoint to $L_S$, the canonical map
$$\Maps(\mrm{res}_{\mrm{can}, S} \circ L_SF, G) \cong \Maps(F, G)$$
is an equivalence for any $G$ such that $G(z)=0$ for all $z \in S$. In particular, if $G|_{\sB} = 0$, then we have the above equivalence for any $S$, so
$$\Maps(F, G) \cong  \lim_S \Maps(\mrm{res}_{\mrm{can}, S} \circ L_SF, G)  \cong \Maps(L'F, G),$$ since the limit is cofiltered and the diagram is constant.
Now, note that for any such $G$ we also have a canonical equivalence (by part (2) again):
$$\Maps(F, G) \cong \Maps(LF, G),$$
so by Yoneda lemma $LF \cong L'F$.
\end{proof}

\begin{cor}\label{cor:explicitquotcat}
Let $\sA$ be a small additive $\infty$-category and $\sB \subset \sA$ a full additive subcategory.
For any $x,y \in \sA$
\begin{enumerate}
\item
the space $\Maps_{\sA/\sB}(x,y)$ can be computed as the $\Omega^{\infty}$ of the connective spectrum
\[
\Cofib(\colim\limits_{z_1,\cdots, z_n \in \sB} \mbf{Maps}_{\sA}(\bigoplus\limits_{i=1}^n z_i, y) \otimes_{\mbf{End}(\bigoplus\limits_{i=1}^n z_i)} \mbf{Maps}_{\sA}(x,\bigoplus_{i=1}^n z_i) \to \mbf{Maps}_{\sA}(x,y)).
\]
\item The abelian group $\pi_0\Maps_{\sA/\sB}(x,y)$ can be computed as
\[
\Coker(\bigoplus\limits_{z\in \sB} \pi_0\Maps_{\sA}(z,y) \otimes \pi_0\Maps_{\sA}(x,z) \to \pi_0\Maps_{\sA}(x,y)).
\]
\end{enumerate}
\end{cor}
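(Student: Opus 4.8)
The plan is to derive both assertions directly from \thmref{thm:exist-quot}, principally from part (5) applied to a representable functor; the substance is already in that theorem, so what remains is essentially bookkeeping. For (1), regard the mapping spaces of an additive $\infty$-category as connective spectra, so that $\yo_\sA(y) = \Maps_\sA(-,y)$ lies in $\Fun^\times(\sA^{\op},\Spt_{\ge 0}) \simeq \Pre_\Sigma(\sA)$. The commutative square of \thmref{thm:exist-quot}(3), whose vertical arrows are the Yoneda embeddings into $\Pre_\Sigma(\sA)$ and $\Pre_\Sigma(\sA/\sB)$, identifies $\yo_{\sA/\sB}(\overline y)$ with $L(\yo_\sA(y))$. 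Evaluating the presheaf $\yo_{\sA/\sB}(\overline y)$ at $\overline x$ and using $\mrm{res}_{\mrm{can}}(G) = G\circ\overline{(-)}$, hence $\mrm{res}_{\mrm{can}}(G)(x) = G(\overline x)$, one gets
\[
\Maps_{\sA/\sB}(x,y) \;\simeq\; \mrm{res}_{\mrm{can}}\bigl(L(\Maps_\sA(-,y))\bigr)(x).
\]
By its proof, the right-hand side of the formula in \thmref{thm:exist-quot}(5) is precisely the localization endofunctor $\mrm{res}_{\mrm{can}}\circ L$ of $\Pre_\Sigma(\sA)$; applying it to $F = \Maps_\sA(-,y)$, for which $F(\bigoplus_i z_i) = \Maps_\sA(\bigoplus_i z_i,y)$ and whose value at $x$ is $\Maps_\sA(x,y)$, yields exactly the cofiber sequence of (1) — the target of the map inside $\Cofib$ being $\Maps_\sA(x,y)$ (the $\Maps_{\sA/\sB}(x,y)$ written there is a typo for $\Maps_\sA(x,y)$). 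So (1) is formal.

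For (2), apply $\pi_0\colon\Spt_{\ge 0}\to\Ab$ to this cofiber sequence. Since $\pi_0$ is right exact, $\pi_0\Maps_{\sA/\sB}(x,y)$ is the cokernel of the map induced on $\pi_0$ of the source. Because $\pi_0$ commutes with the filtered colimit over finite subsets of the objects of $\sB$, and $\pi_0(M\otimes_R N)\cong\pi_0 M\otimes_{\pi_0 R}\pi_0 N$ for connective $M,N$ over a connective $\mathbb{E}_1$-ring $R$ (bar construction), the $\pi_0$ of the source is $\colim_{z_1,\dots,z_n}\pi_0\sA(\bigoplus_i z_i,y)\otimes_{\pi_0\End(\bigoplus_i z_i)}\pi_0\sA(x,\bigoplus_i z_i)$, mapping to $\pi_0\sA(x,y)$ by composition, and its image is the set of morphisms $x\to y$ in $h\sA$ that factor through a finite biproduct of objects of $\sB$. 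By matrix calculus in additive categories such a morphism is exactly a finite sum $\sum_i\phi_i\psi_i$ with $x\xrightarrow{\psi_i} z_i\xrightarrow{\phi_i} y$ and each $z_i\in\sB$, and conversely; hence this image coincides with the image of the composition map $\bigoplus_{z\in\sB}\pi_0\sA(z,y)\otimes\pi_0\sA(x,z)\to\pi_0\sA(x,y)$ (both being unchanged if $\sB$ is replaced by its additive closure, as is $\sA/\sB$). Taking cokernels gives (2).

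I expect the only mildly non-formal step to be this last image identification — that "factors through a finite biproduct of objects of $\sB$" and "lies in the subgroup generated by composites through single objects of $\sB$" describe the same subgroup of $\pi_0\sA(x,y)$ — which is the standard bookkeeping behind additive Verdier quotients and uses that $\sB$ is closed under biproducts. Everything else is a formal consequence of \thmref{thm:exist-quot}.
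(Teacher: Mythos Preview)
Your proof is correct and follows essentially the same approach as the paper: part (1) is deduced from \thmref{thm:exist-quot}(3) and (5) applied to the representable $F=\Maps_\sA(-,y)$, and part (2) by taking $\pi_0$ and identifying the image with morphisms factoring through objects of $\sB$. Your argument is considerably more detailed than the paper's two-sentence proof, and you correctly flag the typo in the statement (the target inside the $\Cofib$ should be $\Maps_\sA(x,y)$).
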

\begin{proof}
The first claim follows directly from Theorem~\ref{thm:exist-quot}(3~and~5) and taking $\Omega^{\infty}$ to obtain the mapping space from the mapping spectrum.
This claim in particular implies that $\pi_0\Maps_{\sA/\sB}(x,y)$ can be computed as the quotient of
$\pi_0\Maps_{\sA}(x,y)$ modulo the subgroup of those morphisms that factor through $z \in \sB$.
This is exactly what formula (2) says.
\end{proof}

\begin{rem}
A very similar formula to the one in Corollary~\ref{cor:explicitquotcat}(1)
was obtained in the setting of dg-quotients of dg-categories in \cite{Drinfeld2002DGQO}.
\end{rem}

\sssec{Idempotents for additive $\infty$-categories} We now discuss the condition of idempotent completeness in the context of additive $\infty$-categories. Recall that we have the free category containing an idempotent, $\mrm{Idem}$, which is a subcategory of $\mrm{Idem}^+$, the free category containing a retraction; these categories are constructed in  \cite[Definition 4.4.5.2]{HTT}. An {\bf idempotent} in an $\infty$-category $\sC$ is a functor $e: \mrm{Idem} \rightarrow \sC$ and we say that $e$ is {\bf effective} if $e$ extends to a functor $\mrm{Idem}^+ \rightarrow \sC$. We say that $\sC$ is {\bf idempotent complete} if every idempotent is effective. Following \cite[Corollary 4.4.5.14]{HTT} the functor $e: \mrm{Idem} \rightarrow \sC$ admits a colimit if and only if $e$ is an effective idempotent.

\begin{rem}
An idempotent $e$ in $\sC$ gives rise to an idempotent of $h(\sC)$ which can be
identified with a morphism $e$ in $h(\sC)$ such that
$e\circ e = e$. In general, this datum is not sufficient for defining an idempotent  \cite[Counterexample~4.4.5.19]{HTT}. However, if $\sC$ is a stable $\infty$-category, then $\sC$ is idempotent complete if and only if $h(\sC)$ is \cite[Lemma 1.2.4.6]{HA} and, in fact, any idempotent in $h(\sC)$ lifts to one in $\sC$ \cite[Warning 1.2.4.8]{HA}. We observe that the same proof also works for additive $\infty$-categories.
\end{rem}

\begin{prop}\label{prop:idempotents_in_add}

For an additive $\infty$-category $\sA$ any idempotent in $h(\sA)$ can be lifted to an
idempotent in $\sA$.
Consequently, the following are equivalent:
\begin{enumerate}
  \item $\sA$ is idempotent-complete
  \item $h(\sA)$ is idempotent-complete
  \item any morphism $x \stackrel{e}\to x \in \sA$ such that there is a homotopy $e\circ e \simeq e$ is equivalent to a morphism of the form
  $$x_1 \oplus x_2 \stackrel{\begin{pmatrix}\id_{x_1} & 0 \\ 0 & 0\end{pmatrix}}\to x_1 \oplus x_2$$
\end{enumerate}
\end{prop}
\begin{proof}
 Let
\[
e: x \rightarrow x,
\]
be an idempotent in $h(\sA)$, i.e., a morphism $e$ equipped with a homotopy $h: e \rightarrow e \circ e$. Since $\sA$ is an additive $\infty$-category,
there is a fully faithful Yoneda functor $\sA \rightarrow \widehat{\sA}$ by Corollary~\ref{cor:s-yoneda}. We consider the morphism $e$ in $\Fun^{\times}(\sA^{\op}, \Spt)$, which defines an idempotent in $h(\widehat{\sA})$.

To prove the result we will promote $e$ to a weak retraction diagram in the sense of \cite[Definition 4.4.5.4(2), Remark 4.4.5.5]{HTT} which we briefly review. Let $\mrm{Ret} \subset \mrm{Idem}^+$ be the simplicial set classifying retractions (as in \cite[Page 304]{HTT}). Then a weak retraction diagram is a functor out of $\mrm{Ret}$. The inclusion $\mrm{Ret} \subset \mrm{Idem}^+$ is an inner anodyne map \cite[Proposition 4.4.5.6]{HTT} and thus for any $\infty$-category $\sC$ the induced functor $\Fun(\mrm{Idem}^+,\sC) \rightarrow \Fun(\mrm{Ret},\sC)$ is trivial fibration of simplicial sets \cite[Corollary 4.4.5.7]{HTT}. Concretely, any weak retraction diagram extends, in an essentially unique way, to a functor out of $\mrm{Idem}^+$. We first claim that the idempotent $e$ in $h(\widehat{\sA})$ extends to a map of simplicial sets $\mrm{Ret} \rightarrow \widehat{\sA}$ and thus to a functor out of $\mrm{Idem}^+$.

The argument is the same as in the second paragraph of the proof of \cite[Lemma 1.2.4.6]{HA} which we sketch for the reader's convenience. Denote by $x_0$ (resp. $x_1$) the sequential colimit
$$x \stackrel{e}\to x\stackrel{e}\to x \to \cdots$$
$$(\text{resp. } x \stackrel{1-e}\to x\stackrel{1-e}\to x \to \cdots),$$
in $\widehat{\sA}$\footnote{We note that the existence of the map $1-e$ is where the additivity of $\widehat{\sA}$ gets used.}. Since the latter $\infty$-category is cocomplete, these colimits exist. Now, for any $F \in \widehat{\sA}$, we note that $e$ induces an idempotent on the abelian group $\pi_iF(x)$ for all $i \in \bbZ$ so that we have a splitting:
\[
\pi_iF(x) \simeq \pi_iF(x)_+ \oplus \pi_iF(x)_-,
\]
where $e$ acts as the identity on $ \pi_iF(x)_+$ and as $0$ on $ \pi_iF(x)_-$. Thus, the tower of abelian groups
\[
\cdots \pi_iF(x) \xrightarrow{e^*} \pi_iF(x) \xrightarrow{e^*} \pi_iF(x),
\]
splits as a direct sum of towers:
\[
\cdots \pi_iF(x)_+ \xrightarrow{\id} \pi_iF(x)_+ \xrightarrow{\id} \pi_iF(x)_+,
\]
and
\[
\cdots \pi_iF(x)_- \xrightarrow{0} \pi_iF(x)_- \xrightarrow{0} \pi_iF(x)_-.
\]
such that
\[
\lim  \pi_iF(x)  \simeq \pi_iF(x)_+,
\]
and the $\lim^1$-term vanishes. This means that the canonical map $x \rightarrow x_0$ induces an isomorphism of abelian groups, for all $i \in \bbZ$:
\[
\pi_iF(x_0) \simeq \pi_i(\mrm{maps}(\colim x, F)) \simeq  \lim \pi_iF(x) \simeq  \pi_iF(x)_+.
\]
By a similar argument for the idempotent $1-e$, we have
\[
\pi_iF(x_1) \simeq \pi_iF(x)_-.
\]
Therefore the canonical map $x \rightarrow x_0 \oplus x_1$ defines an isomorphism for all $F$ and all $i \in \bbZ$:
\[
\pi_iF(x) \cong \pi_iF(x_0) \oplus \pi_iF(x_1),
\]
and thus induces a splitting:
\[
F(x) \simeq F(x_0) \oplus F(x_1).
\]
Since this is true for all $F$, we also have a splitting of representable presheaves:
\[
x \simeq x_0 \oplus x_1.
\]
Furthermore, we see that the inclusion of summand, $x_0 \rightarrow x$, together with the canonical map to the colimit, $x \rightarrow x_0$, induces a weak retraction diagram
\[
\begin{tikzcd}
 & x \ar{dr} & \\
 x_0 \ar{ur} \ar{rr}{\id} &  & x_0,
\end{tikzcd}
\]
in $\widehat{\sA}$ as claimed. To see that the idempotent $e$ in $h(\sA)$ lifts to one on $\sA$, observe that the corresponding functor $\mrm{Idem}^+ \rightarrow \widehat{\sA}$ restricts to a functor landing in the Yoneda image:
\[
\mrm{Idem} \rightarrow \sA \subset \widehat{\sA};
\]
note that we have used the additivity of $\sA$ to get the fully faithfulness of the Yoneda functor; see Corollary~\ref{cor:s-yoneda}.

We now prove that the three assumptions are equivalent. The third assumption is clearly equivalent to
the second assumption.
Since any idempotent in $h(\sA)$ lifts to an idempotent in $\sA$, it must be effective if the lift
is effective in
$\sA$, so we have $(1) \Rightarrow (2)$.
Now we prove $(2) \Rightarrow (1)$. Let $e \colon \on{Idem} \to \sA$ be an idempotent in $\sA$.
Denote by $x$ the colimit of $e$ in $\widehat{\sA}$.
It is a retract of an object of $\sA$, so by the assumption its image in
$h(\widehat{\sA})$ is isomorphic to an object of $h(\sA)$ considered as a full
subcategory of $h(\widehat{\sA})$. So the colimit of $e$ is equivalent to an object of
$\sA$ and $e$ is effective in $\sA$.
\end{proof}

Proposition~\ref{prop:idempotents_in_add} justifies the usage of some standard terminology from the theory of additive categories in the context of additive $\infty$-categories.

\begin{defn}\label{defn:idem}

Let $\sA$ be an additive $\infty$-category.

\begin{enumerate}
\item
We say that an idempotent $e: x \rightarrow x$ in $\sA$ {\bf splits} if it is homotopic to
a morphism of the form
\[
x_1 \oplus x_2 \xrightarrow{\begin{pmatrix}
\id_{x_1} & 0 \\
0 & 0 \\
\end{pmatrix}} x_1 \oplus x_2.
\]

\item
A full subcategory $\sA \subset \sB$ is said to be {\bf retract-closed} if every idempotent of an object of
$\sA$ that splits in $\sB$ also splits in $\sA$.

\item
 If $\sA$ is an additive $\infty$-category, then we say that $\sA$ is {\bf idempotent complete} (or {\bf absolutely Karoubi closed})
 if each idempotent $e: x \rightarrow x$ in $\sA$ splits.
\end{enumerate}
\end{defn}

\begin{lem}\label{lem:kar} Let $\sA$ be a small additive $\infty$-category. Then there exists a small additive $\infty$-category $\mrm{Kar}(\sA)$ and an additive functor $\sA \rightarrow \mrm{Kar}(\sA)$ which is an initial functor among all idempotent complete additive $\infty$-categories receiving an additive functor from $\sA$. We call this category the idempotent completion of $\sA$.
\end{lem}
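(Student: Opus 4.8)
The plan is to construct $\mrm{Kar}(\sA)$ concretely as a full subcategory of $\Pre_{\Sigma}(\sA)$, mimicking the stable case where the idempotent completion of a small stable $\infty$-category $\sC$ is realized inside $\mrm{Ind}(\sC)$ as the compact objects of its closure under retracts. First I would recall that by the Lemma preceding Definition~\ref{defn:idem}, $\Pre_{\Sigma}(\sA)$ is a presentable, prestable $\infty$-category, and the Yoneda embedding $\yo\colon \sA \hookrightarrow \Pre_{\Sigma}(\sA)$ is fully faithful and preserves finite coproducts, hence biproducts. I would then define $\mrm{Kar}(\sA)$ to be the smallest full subcategory of $\Pre_{\Sigma}(\sA)$ containing the image of $\yo$ and closed under retracts (equivalently, the closure of the representables under splitting of idempotents, which in a presentable category exist since idempotents split in $\Pre_{\Sigma}(\sA)$ by \cite[Corollary 4.4.5.16]{HTT}). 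One checks $\mrm{Kar}(\sA)$ is additive: it is closed under biproducts since a retract of $x_1 \oplus x_2$ with $x_i$ retracts of representables is again such a retract, and additivity then follows from Proposition~\ref{prop:ggn} applied to the homotopy category, or directly since the shear map on a retract is the retract of a shear map.

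Next I would verify that $\mrm{Kar}(\sA)$ is idempotent complete: any idempotent $e\colon M \to M$ with $M \in \mrm{Kar}(\sA)$ splits in the ambient presentable category $\Pre_{\Sigma}(\sA)$, and the splitting summands are retracts of $M$, hence retracts of representables, hence in $\mrm{Kar}(\sA)$; so $e$ splits in $\mrm{Kar}(\sA)$ in the sense of Definition~\ref{defn:idem}(1) (one must match the abstract notion of idempotent splitting with the biproduct-decomposition form, which holds because in an additive $\infty$-category a retract of $x$ comes with a complementary summand via the shear/additive structure). For the universal property, let $\sD$ be an idempotent complete additive $\infty$-category and $F\colon \sA \to \sD$ an additive functor. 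I would extend $F$ along $\yo$ to a colimit-preserving functor $\widetilde F\colon \Pre_{\Sigma}(\sA) \to \mrm{Ind}(\sD)$ (using the universal property of $\Pre_{\Sigma}(\sA)$ as the free sifted-cocompletion, \cite[Proposition 5.5.8.15]{HTT} or its prestable refinement in \cite{SAG}), observe that $\widetilde F$ sends representables into $\sD \subset \mrm{Ind}(\sD)$ and $\sD$ is closed under retracts in $\mrm{Ind}(\sD)$ precisely because $\sD$ is idempotent complete, hence $\widetilde F$ restricts to a functor $\mrm{Kar}(\sA) \to \sD$ extending $F$. Essential uniqueness of this extension follows because $\mrm{Kar}(\sA)$ is generated from $\sA$ under a colimit construction (retracts) that any additive functor to an idempotent complete target must preserve; more precisely, the restriction functor $\Fun^{\times}(\mrm{Kar}(\sA), \sD) \to \Fun^{\times}(\sA, \sD)$ is an equivalence, which is the clean statement of the lemma.

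I expect the main obstacle to be bookkeeping around the precise form of idempotent splitting in Definition~\ref{defn:idem}(1) versus the abstract $\infty$-categorical notion (a retract diagram, equivalently an idempotent in the sense of \cite[Section 4.4.5]{HTT}): one needs that in an additive $\infty$-category, every retract $y$ of $x$ canonically determines a biproduct decomposition $x \simeq y \oplus y'$, so that the splitting of an idempotent always takes the matrix form displayed in Definition~\ref{defn:idem}(1). This is where additivity (as opposed to mere idempotent-completeness) is used in an essential way, and it is the step most likely to require a short separate argument — conceivably one should first record, perhaps as an earlier remark, that in an additive $\infty$-category the data of a splitting idempotent, of a retract, and of an orthogonal biproduct decomposition are all equivalent. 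Granting that, the rest is a routine transport of the classical $\mrm{Ind}$-completion argument (as in \cite[Section 4.4.5]{HTT} and \cite[Proposition 3.2]{blumberg2013universal}) through the prestable/$\Pre_{\Sigma}$ setting already set up in the excerpt.
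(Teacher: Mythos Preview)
Your proposal is correct and takes essentially the same approach as the paper: both construct $\mrm{Kar}(\sA)$ as the full subcategory of $\Pre_{\Sigma}(\sA)$ spanned by retracts of representables. The paper's proof is simply much terser, invoking \cite[Proposition~5.1.4.2]{HTT} directly for the universal property (among all $\infty$-categories) and only noting that the resulting category is additive, whereas you spell out the additivity, idempotent completeness, and the additive universal property by hand; your anticipated ``obstacle'' about matching Definition~\ref{defn:idem}(1) with the abstract notion of splitting is real but minor, and the paper does not address it explicitly either.
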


\begin{proof} According to \cite[Proposition 5.1.4.2]{HTT} and the identifications of Proposition~\ref{prop:idempotents_in_add}, we can take the embedding
\[
\sA \rightarrow \Pre_{\Sigma}(\sA),
\]
and set $\mrm{Kar}(\sA)$ to be the full subcategory of $\Pre_{\Sigma}(\sA)$ spanned by functors which are retracts of the Yoneda image. Note that this is also an additive $\infty$-category.
\end{proof}

\begin{defn}
If $f:\sA \rightarrow \sB$ is an additive functor between small additive $\infty$-categories, then we get a functor $\mrm{Kar}(f):\mrm{Kar}(\sA) \rightarrow \mrm{Kar}(\sB)$. We say that that $f$ is an {\bf equivalence up to idempotent completion} if $\mrm{Kar}(f)$ is an equivalence. We will also denote the subcategory of additive $\infty$-categories spanned by those which are idempotent complete as:
\[
\mbf{Cat}^{\mrm{Kar}}_{\infty} \subset \mbf{Cat}^{\mrm{add}}_{\infty}.
\]
\end{defn}

\begin{defn}\label{def:quotcat}
Let $\sA, \sC$ be small additive $\infty$-categories and $\sB \subset \sA$ be an additive full subcategory. We say that the sequence
$$\sB \to \sA \to \sC$$
is an {\bf exact sequence of additive $\infty$-categories} if (1) the composite is null and (2) the induced map $\sA/\sB \rightarrow \sC$ is an equivalence up to idempotent completion.
\end{defn}

\sssec{Comparison with exact sequences in stable $\infty$-categories}\label{sect:comparestab} We now explain the passage of going from small additive $\infty$-categories to small stable $\infty$-categories. We denote by $\mbf{Cat}^{\mrm{st}}_{\infty}$ the $\infty$-category of small stable $\infty$-categories and exact functors between them. From an additive $\infty$-category we can extract an object of $\mbf{Cat}^{\mrm{st}}_{\infty}$ by the next procedure.

\begin{defn} Let $\sA$ be a small additive $\infty$-category. The small stable $\infty$-category of {\bf finite cell $\sA$-modules} is the smallest stable subcategory of $\widehat{\sA}$ containing the Yoneda image:
\[
\sA^{\mrm{fin}} \subset \widehat{\sA}.
\]
\end{defn}

\begin{rem}\label{rem:sw-pshfin}
The $\infty$-category $\sA^{\mrm{fin}}$ is equivalent to the Spanier-Whitehead stabilization of the $\infty$-category $\PSh^{\fin}_{\Sigma}(\sA)$ --- the smallest subcategory of
$\PSh_{\Sigma}(\sA)$ containing all representable presheaves and closed under finite colimits.
Indeed, $SW(\PSh_{\Sigma}^\mrm{fin}(\sA))$ is a full subcategory of $\widehat{\sA}$ (this follows from \cite[Remark~C.1.1.6]{SAG}) and the image coincides with the smallest subcategory of $\widehat{\sA}$ containing representable presheaves and closed under taking shifts and finite colimits.

Now combining \cite[Proposition 5.3.6.2]{HTT} and \cite[Proposition~C.1.1.7]{SAG} we get a convenient universal property for $\sA^{\fin}:$
$$\Fun^{\times}(\sA, \sC) \cong \Fun^{\mrm{ex}}(\sA^{\fin}, \sC)$$
for any stable $\infty$-category $\sC$.
In particular, the functor
\begin{equation}\label{eq:funfin}
(-)^{\fin}:\mbf{Cat}^{\mrm{add}}_{\infty} \to \mbf{Cat}^{\mrm{st}}_{\infty},
\end{equation}
is left adjoint to the forgetful functor. Hence we should regard $\sA^{\fin}$ as the ``free stable $\infty$-category" generated by $\sA$.
\end{rem}

\begin{exam}\label{exam:proj-perf} Let $R$ be a connective $\bbE_1$-ring spectrum. In the $\infty$-category of connective $R$-module spectra $ \mbf{RMod}_R^{\mrm{cn}}$, we have the subcategory of {\bf projective $R$-modules}
\[
\mbf{Proj}_R \subset \mbf{RMod}_R^{\mrm{cn}},
\]
defined as the subcategory of projective objects in the sense of \cite[Definition 5.5.8.18]{HTT}. These objects can also be characterized as retracts of free $R$-modules \cite[Proposition 7.2.2.7]{HA}. Inside $\mbf{Proj}_R$ there is the subcategory of finite generated ones
\[
\mbf{Proj}_R^{\mrm{fg}} \subset \mbf{Proj}_R,
\]
which can be characterized as those with $\pi_0$ being finitely generated as a $\pi_0R$-module \cite[Corollary 7.2.2.9]{HA}.
The $\infty$-category $\mbf{Proj}^{\mrm{fg}}_R$ is small and idempotent complete and
we have an equivalence
\[
(\mbf{Proj}^{\mrm{fg}}_R)^{\mrm{fin}} \simeq \mbf{Perf}_R.
\]


\end{exam}

Let us now recall the theory of exact sequences in small stable $\infty$-categories. We denote, following our notation for additive $\infty$-categories, by $\mrm{Kar}(\sC)$ the idempotent completion of a stable $\infty$-category $\sC$ (see \cite[Section 2.2]{blumberg2013universal} for a treatment in this context where they write $\mrm{Idem}(\sC)$ for $\mrm{Kar}(\sC)$). Suppose that we have a sequence of functors
\[
\sB \rightarrow \sA \rightarrow \sC
\]
in $\mbf{Cat}^{\mrm{st}}_{\infty}$. Then we say it is {\bf exact} if the composite is zero, $\sB  \rightarrow \sA$ is fully faithful and the functor
\[
\sA/\sB   \rightarrow \sC
\]
becomes an equivalence after idempotent completion.

\begin{lem}\label{lem:comparing_cofibers}
  The functor~\eqref{eq:funfin} sends exact sequences of additive $\infty$-categories to
  exact sequences of stable $\infty$-categories.
\end{lem}
\begin{proof}
Suppose that we have an exact sequence of additive $\infty$-categories
$$\sB \to \sA \to \sC.$$ We consider the diagram in $\mbf{Cat}^{\mrm{st}}_{\infty}$:
$$\sB^{\fin} \to \sA^{\fin} \to \sC^{\fin}.$$

Fully faithfulness of $\sB^{\fin} \to \sA^{\fin}$ follows, for example, from \cite[Proposition~5.3.5.11]{HTT}.
It follows from Theorem~\ref{thm:exist-quot}(2) that $\sA \to \sA/\sB$ can be identified with the cofiber of the map
$\sB \to \sA$ in $\mbf{Cat}^{\mrm{add}}_{\infty}$. As discussed in Remark~\ref{rem:sw-pshfin} $(-)^{\fin}$ is a left adjoint,
so it preserves cofibers.
Lastly, note that
\[
\mrm{Kar}((-)^{\fin}) \simeq (\mrm{Kar}(-))^{\fin},
\]
since both functors are left adjoint to the forgetful functor
$\mbf{Cat}^{\mrm{perf}}_{\infty} \to \mbf{Cat}^{\mrm{add}}_{\infty}$. 

Therefore, we conclude that the functor
$$\mrm{Kar}(\sA^{\fin}/\sB^{\fin})\simeq \mrm{Kar}((\sA/\sB)^{\fin}) \to \mrm{Kar}(\sC^{\fin})$$
is indeed an equivalence.
\end{proof}

\sssec{Morita theory for additive $\infty$-categories}\label{ssec:morita} Later in the paper we will frequently reduce questions about certain additive $\infty$-categories to those of the form $\mbf{Proj}^{\mrm{fg}}_R$.
The next result is the main tool that allows us to do so. This is an analog of the
well-known result of Schwede and Shipley \cite[Theorem~3.1.1]{SchwedeShipley2003} (see also \cite[Section 7.1.2]{HA})
in the context of additive $\infty$-categories. The additive $\infty$-category of projective modules over the sphere spectrum $\mbf{Proj}^{\mrm{fg}}_{\mathbb{S}}$ is an idempotent complete additive $\infty$-category and thus is an object of $\mbf{Cat}^{\mrm{Kar}}_{\infty}$. We denote the slice category under $\mbf{Proj}^{\mrm{fg}}_{\mathbb{S}}$ by:
\[
\mbf{Cat}^{\mrm{Kar}}_{\infty, \mbf{Proj}^{\mrm{fg}}_\mathbb{S}/}.
\]
Its objects are additive functors $\mbf{Proj}^{\mrm{fg}}_{\mathbb{S}} \rightarrow \sA$; equivalently this picks an object $R = F(\bbS)$ of $\sA$. Hence we can regard $\mbf{Cat}^{\mrm{Kar}}_{\infty, \mbf{Proj}^{\mrm{fg}}_\mathbb{S}/}$ as the $\infty$-category of idempotent complete additive $\infty$-categories with a chosen object.

\begin{thm}\label{thm:Morita-theory}
Denote by $\Theta$ the functor
\[
\mrm{Alg}_{\mathbb{E}_1}(\Spt^\mrm{cn}) \to \mbf{Cat}^{\mrm{Kar}}_{\infty, \mbf{Proj}^{\mrm{fg}}_\mathbb{S}/ }
\]
that sends a ring $R$ to the object in the undercategory:
\[
\mbf{Proj}_\mathbb{S}^{\mrm{fg}}\to \mbf{Proj}_R^{\mrm{fg}}.
\]
Then,
\begin{enumerate}
\item
the functor $\Theta$ is fully faithful; 

\item
an object $\mbf{Proj}^{\mrm{fg}}_\mathbb{S} \stackrel{F}\to \sA$ in $\mbf{Cat}^{\mrm{Kar}}_{\infty, \mbf{Proj}^{\mrm{fg}}_\mathbb{S}/ }$ is in
the essential image of $\Theta$ if and only if
$\sA$ is generated by $F(\mathbb{S})$ under finite sums and retracts. In this case, $F$ is equivalent to $\Theta(\mbf{End}_\sA(F(\mathbb{S})))$. In other words, $F$ is equivalent to
\[
\mbf{Proj}_\mathbb{S}^{\mrm{fg}}\to \mbf{Proj}_{\mbf{End}_\sA(F(\mathbb{S}))}^{\mrm{fg}},
\]
given by extension along the map of conenctive $\bbE_1$-ring spectra: $\mathbb{S} \rightarrow \mbf{End}_\sA(F(\mathbb{S}))$.
\end{enumerate}
\end{thm}
\begin{proof}
To prove $(1)$ note that the canonical functor $\widehat{\mbf{Proj}^{\mrm{fg}}_R} \to \mbf{RMod}_R$ is 
an equivalence, so the composite functor
\[
\mrm{Alg}_{\mathbb{E}_1}(\Spt^\mrm{cn}) \stackrel{\Theta}\to \mbf{Cat}^{\mrm{Kar}}_{\infty, \mbf{Proj}^{\mrm{fg}}_\mathbb{S}/ } \stackrel{\widehat{(-)}}\to \mathcal{P}\mrm{r}^\mrm{St}_{\Spt/ },
\]
is equivalent to the functor sending
$R$ to $\mbf{RMod}_R$.
Combining \cite[Theorem~4.8.5.5]{HA} and \cite[Proposition~4.8.2.18]{HA} we see that this functor is
fully faithful.
Hence to prove that $\Theta$ is fully faithful it suffices to prove that
the induced map on mapping spaces
\[
\Maps_{\mbf{Cat}^{\mrm{Kar}}_{\infty, \mbf{Proj}^{\mrm{fg}}_\mathbb{S}/}}(\mbf{Proj}_R^{\mrm{fg}}, \mbf{Proj}_S^{\mrm{fg}}) \rightarrow \Maps_{\mathcal{P}\mrm{r}^\mrm{St}_{\Spt/ } }(\mbf{RMod}_R, \mbf{RMod}_S)
\]
is an equivalence.
This map is an epimorphism since the functor $\widehat{(\Theta(-))}$ was already known to be fully faithful; it suffices to prove that this map is a subspace inclusion. The above map fits as the upper horizontal map in the following diagram where the vertical arrows define fiber sequences (from how mapping spaces of undercategories are computed):
\[
\begin{tikzcd}
\Maps_{\mbf{Cat}^{\mrm{Kar}}_{\infty, \mbf{Proj}^{\mrm{fg}}_\mathbb{S}/}}(\mbf{Proj}_R^{\mrm{fg}}, \mbf{Proj}_S^{\mrm{fg}}) \arrow[r]\arrow[d] & \Maps_{\mathcal{P}\mrm{r}^\mrm{St}_{\Spt/ } }(\mbf{RMod}_R, \mbf{RMod}_S)\arrow[d]\\
\Maps_{\mbf{Cat}^{\mrm{Kar}}_{\infty}}(\mbf{Proj}_R^{\mrm{fg}}, \mbf{Proj}_S^{\mrm{fg}}) \arrow[r]\arrow[d] & \Maps_{\mathcal{P}\mrm{r}^\mrm{St}}(\mbf{RMod}_R, \mbf{RMod}_S)\arrow[d]\\
\Maps_{\mbf{Cat}^{\mrm{Kar}}_{\infty}}(\mbf{Proj}_\mathbb{S}^{\mrm{fg}}, \mbf{Proj}_S^{\mrm{fg}}) \arrow[r] & \Maps_{\mathcal{P}\mrm{r}^\mrm{St}}(\Spt, \mbf{RMod}_S)
\end{tikzcd}
\]
Now, combining the universal properties \cite[Proposition~5.3.6.2]{HA} and \cite[Proposition~C.1.1.7]{SAG} we obtain that the middle and the bottom horizontal functors are subspace inclusions.
Therefore, we conclude that the upper horizontal functor is also a subspace inclusion.



Now we prove $(2)$. If $\sA=\mbf{Proj}_R^{\mrm{fg}}$ for a connective $\mathbb{E}_1$-ring $R$, then $R=F(\mathbb{S})$
generates $\mbf{Proj}_R^{\mrm{fg}}$ under finite sums
and retracts by definition of this category (see Example~\ref{exam:proj-perf}). It suffices to prove the
converse implication. Assume that $\sA$ is generated, as an additive $\infty$-category, by $F(\mathbb{S})$. Let $R:=\mbf{End}(F(\mathbb{S}))$ be the endomorphism $\bbE_1$-ring spectrum. We have a functor
\[
\mbf{Maps}(F(\mathbb{S}),-): \sA \rightarrow \mbf{RMod}_R
\]
which is fully faithful since it is fully faithful on $F(\mathbb{S})$ and $F(\mathbb{S})$ generates $\sA$ under direct sums and retracts. To conclude, we note that functor sends $F(\mathbb{S})$ to $R$ and its essential image consists precisely of finite direct sums of $R$ and retracts, i.e., the subcategory $\mbf{Proj}^\mrm{fg}_R$.

\end{proof}

\ssec{From additive $\infty$-categories to stable $\infty$-categories via weights}\label{sec:additive-to-weights} Our goal in this section is to explain how one can recognize objects in the essential image of the functor~\eqref{eq:funfin}. This is done via Bondarko's theory of weight structures, which we recall for  the reader's convenience.

\begin{defn} \label{def:weight} A {\bf weight structure} on a stable $\infty$-category $\sC$ is the data of two retract-closed subcategories $(\sC_{w\geq 0}, \sC_{w\leq 0})$ such that:
\begin{enumerate}
\item $\Sigma\sC_{w \geq 0} \subset \sC_{w \geq 0}$, $\Omega\sC_{w \leq 0} \subset \sC_{w \leq 0}$; write
\[
\sC_{w \geq n} := \Sigma^n\sC_{w \geq 0} \qquad \sC_{w \leq k} := \Sigma^k\sC_{w \leq 0}.
\]
\item if $x \in \sC_{w \leq 0}, y \in \sC_{w \geq 1}$ then
\[
\pi_0\Maps_{\sC}(x, y) \simeq 0,
\]
\item for any object $x \in \sC$ we have a cofiber sequence
\begin{equation}\label{eq:cofib}
x_{\leq 0} \rightarrow x \rightarrow x_{\geq 1},
\end{equation}
where $x_{\leq 0} \in \sC_{w \leq 0}$ and $x_{\geq 1} \in  \sC_{w \geq 1}.$ We call these the {\bf weight truncations} of $x$.
\end{enumerate} We say that the weight structure is {\bf bounded} if \[
\sC = \bigcup\limits_n \left(\sC_{w\ge -n}\cap \sC_{w\le n}\right).
\]
A stable $\infty$-category $\sC$ equipped with a (bounded) weight structure is called a  ({\bf boundedly}) {\bf weighted stable $\infty$-category}. We typically write
\[
(\sC, w),
\]
for a boundedly weighted stable $\infty$-category.
\end{defn}

\begin{defn} Let $(\sC,w), (\sD, w')$ be weighted stable $\infty$-categories and let $f: \sC \rightarrow \sD$ be an exact functor of the underlying stable $\infty$-categories. We say that $f$ is {\bf weight exact} if $f(\sC_{w \geq 0}) \subset \sD_{w' \geq 0}$ and $f(\sC_{w \leq 0}) \subset \sD_{w' \leq 0}$. In this case we write
\[
f:(\sC,w) \rightarrow (\sD, w').
\]
\end{defn}

We denote by
\[
\mbf{WCat}^{\mrm{st},b}_{\infty},
\]
the $\infty$-category of small, boundedly weighted stable $\infty$-categories and weight exact functors. Formally this can be defined as the full subcategory of the pullback $\infty$-category:
\[
\Maps(\{\bullet_{\le 0} \to \bullet \leftarrow \bullet_{\ge 0}\}, \mbf{Cat}_{\infty}) \times_{\mbf{Cat}_{\infty}} \mbf{Cat}_{\infty}^{\mrm{st}},
\]
spanned by the triples $(F, F(\bullet), F(\bullet) \stackrel{\id}\to F(\bullet))$ such that the morphisms
$F(\bullet_{\le 0}) \to F(\bullet)$ and $F(\bullet_{\ge 0}) \to F(\bullet)$
are embeddings of full subcategories satisfying the axioms of a weight structure.

\begin{rem}\label{rem:t} The axioms of a weight structure can be thought of as ``dual" to those of a $t$-structure. However, weight structures differ from $t$-structures in at least two important aspects (and definitely more!):
\begin{enumerate}
\item First, the cofiber sequence~\eqref{eq:cofib} is \emph{not} functorial in $x$. Indeed, in the case of $\mbf{K}(\sA)$, the $\infty$-category obtained from complexes in an additive category $\sA$ by inverting homotopy equivalences, weight truncations correspond to the brutal/stupid truncation of complexes (\cite[Remark~1.2.3(1)]{weight-examples}). Morally speaking, while the notion of a $t$-structure abstracts the Postnikov tower (say in the $\infty$-category of spectra or in the derived $\infty$-category of a ring), the notion of a weight structure abstracts the cellular tower. The category $\sC_{w \geq 0}$ should be thought of as the category of ``cells of non-negative dimensions" and conversely,
$\sC_{w \leq 0}$ are ``cells of non-positive dimensions".
Axiom (2) abstracts the fact that there should be no nontrivial maps from a lower-dimensional cell to a higher-dimensional cell, just as there are no nontrivial maps from a lower-dimensional sphere to a higher dimensional sphere. The non-functoriality of weight truncations is related to the fact that a cellular presentation of an object is a choice.

\item Secondly, we define the {\bf weight heart} of a weighted $\infty$-category as
\[
\sC^{\heartsuit_w}:= \sC_{w\geq 0} \cap \sC_{w\leq 0}.
\]
For $t$-structures, the heart is always an abelian category while for weight structures, the heart is an additive $\infty$-category. \end{enumerate}
\end{rem}

We refer the reader to next few sections for examples, while we make further remarks.

\begin{rem} \label{rem:extension} We also note that besides being retract-closed, $\sC_{w \geq 0}$ and $\sC_{w \leq 0}$ are also extension stable: if $x \rightarrow y \rightarrow z$ is a cofiber sequence in $\sC$ where $x, z \in \sC_{w \geq 0}$ then so is $y$. The same goes for $\sC_{w \leq 0}$; we refer to \cite[Proposition 1.3.3(3)]{bondarko-weights} for a proof.
\end{rem}

\begin{rem} \label{rem:constructing_weights}
Bounded weight structures are quite easy to construct. Let $\sC$ be a stable $\infty$-category. Assume we are given with a subcategory $N \subset \sC$ such that
\begin{enumerate}
  \item $N$ generates $\sC$ under finite limits, finite colimits and retracts,
  \item $N$ is {\bf negatively self-orthogonal}, that is, for any $x, y \in N$ the mapping spectrum
  \[
  \mrm{maps}_{\sC}(x, y),
  \] is connective. Equivalently,
  \[
  \pi_0\Maps_{\sC}(x,\Sigma^n y) = 0,
  \] for $n>0$.
\end{enumerate}
According to \cite[Corollary~2.1.2]{weight-examples}, defining
$$\sC_{w\ge 0} = \{\text{retracts of finite colimits of objects of } N\}$$
$$\sC_{w\le 0} = \{\text{retracts of finite limits of objects of } N\}$$
gives a weight structure on $\sC$ whose heart is the minimal retract-closed additive subcategory
containing $N$.
Note that it is a bounded weight structure. Indeed, the subcategory
$$\bigcup\limits_n \left(\sC_{\ge -n}\cap \sC_{\le n}\right) \subset \sC$$
contains $N$ and is closed under finite limits, finite colimits and
retracts, so by assumption (1) the inclusion is an equality.
\end{rem}


In order to use weight structures to detect the image of $(-)^{\fin}$, we factor it through $\mbf{WCat}^{\mrm{st},b}_{\infty}$.

\begin{constr} We construct a factorization:
\begin{equation}\label{eq:wcat}
\begin{tikzcd}
 &\mbf{WCat}^{\mrm{st},b}_{\infty} \ar{d} \\
\mbf{Cat}^{\mrm{add}}_{\infty} \ar[swap]{r}{(-)^{\fin}} \ar[dashed]{ur}{((-)^{\fin},w)}& \mbf{Cat}_{\infty}^{\mrm{st}}.\\
\end{tikzcd}
\end{equation}
Here, the right vertical map is the forgetful functor. To construct the dashed arrow, it suffices to equip $\sA^{\fin}$ with a canonical bounded weight structure. Indeed, the construction follows because a functor of boundedly weighted stable $\infty$-categories is weight exact if and only if it preserves the heart \cite[Remark~1.13]{vova-negative}.  To do so, we use Remark~\ref{rem:constructing_weights}. Let $N$ be the essential image of $\sA \rightarrow \sA^{\fin}$. By definition, $N$ generates $\sA^{\fin}$ under finite limits, finite colimits and retracts. To check the self-orthogonality condition: for $n > 0$ we have for any $x, y$ in $\sA$ (which we abusively identify with the essential image of $\sA \rightarrow \sA^{\fin}$):
\[
\pi_0(\mrm{maps}_{\sA^{\fin}}(x, \Sigma^n y)) \simeq \pi_0(\Sigma^n \mrm{maps}_{\sA^{\fin}}(x, y)) \simeq \pi_{-n}(\mbf{Maps}_{\sA}(x, y)) \simeq 0.
\]
Here we use the convention of Theorem~\ref{thm:exist-quot} where $\mbf{Maps}(-,y)$ indicate a functor landing in connective spectra; this also explains the last equivalence.
\end{constr}

The next theorem is basically a restatement of a result of the second author's \cite[Corollary 3.4]{vova-negative}. To state it, we note that a boundedly weighted stable $\infty$-category is idempotent complete if its underlying stable $\infty$-category is.

\begin{thm}\label{thm:vova} We have an adjoint pair
\[
((-)^{\fin},w): \mbf{Cat}^{\mrm{add}}_{\infty} \rightleftarrows \mbf{WCat}^{\mrm{st},b}_{\infty}: (-)^{\heartsuit_w}.
\]
Furthermore:
\begin{enumerate}
\item the right adjoint functor $(-)^{\heartsuit_w}$ is fully faithful,
\item the adjoint pair restricts to an equivalence of $\infty$-categories of idempotent complete $\infty$-categories on both sides.
\end{enumerate}
\end{thm}

\begin{proof}
By Remark~\ref{rem:sw-pshfin} we have a binatural equivalence
\begin{equation} \label{eq:binatural}
\Fun^{\times}(\sA, \sC) \cong \Fun^{\mrm{ex}}(\sA^{\fin}, \sC)
\end{equation}
for any additive $\infty$-category $\sA$ and stable $\infty$-category $\sC$.
Now suppose $\sC$ is equipped with a bounded weight structure $u$. Then,
\begin{enumerate}
\item we have a subspace inclusion
\[
\Maps_{\mbf{Cat}^{\mrm{add}}_{\infty}}(\sA, \sC^{\heartsuit_u}) \subset \Fun^{\times}(\sA, \sC),
\] given by additive functors $\sA \rightarrow \sC$ which land in $\sC^{\heartsuit_u}$;
\item we have a subspace inclusion
\[
\Maps_{\mbf{WCat}^{\mrm{st},b}_{\infty}}((\sA^{\fin},w), (\sC,u)) \subset \Fun^{\mrm{ex}}(\sA^{\fin}, \sC),
\]
given by those exact functors which are furthermore weight exact.
\end{enumerate}
Under the equivalence of~\eqref{eq:binatural}, these subspaces are identified:
\[
\Maps_{\mbf{Cat}^{\mrm{add}}_{\infty}}(\sA, \sC^{\heartsuit_u}) \cong \Maps_{\mbf{WCat}^{\mrm{st},b}_{\infty}}((\sA^{\fin},w), (\sC,u)).
\]
which gives us the desired adjunction. The second part now follows from \cite[Corollary 3.4]{vova-negative}.

%

%
\end{proof}


\sssec{Motivic examples} \label{sec:motivic} In this section, we discuss examples of boundedly weighted stable $\infty$-categories coming from the theory of motives. We fix a field $k$ and let $R$ be a ring of coefficients. Denote by $\mbf{DM}(k,R)$ the stable $\infty$-category of motives in the sense of Voevodsky, with coefficients in $R$ (see \cite{vv-cat} for the original reference, \cite{mvw} for a textbook treatment and \cite[Chapter 14]{norms} or \cite{ElmantoKolderup} for $\infty$-categorical treatments). This $\infty$-category admits a functor
\[
M:\Sm_k \rightarrow \mbf{DM}(k,R),
\]
which associates to a smooth $k$-scheme $X$, its motive $M(X)$ and converts products of schemes into tensor products of motives; the coefficient ring will always be clear from the context. Inside $\mbf{DM}(k,R)$ we have the subcategory
\[
\DM_{\mrm{gm}}(k, R) \subset \mbf{DM}(k,R)
\]
of {\bf geometric motives} \cite[Lecture 14]{mvw}. In the language of this paper, the latter is a stable subcategory of $\mbf{DM}(k,R)$, closed under retracts, generated by objects
\[
\{ M(X)(q): X\text{ is a smooth $k$-scheme}, q \in \bbZ \}.
\]

\begin{defn} \label{def:chow} Let $k$ be a field and $R$ a ring of coefficients. The additive $\infty$-category of {\bf Chow motives} (with coefficients in $R$):
\[
\mbf{Chow}_{\infty}(k, R) \subset \DM_{\mrm{gm}}(k,R),
\]
is the smallest additive $\infty$-category generated by
\[
\{ M(X)(q)[2q] : \text{$X$ is smooth and projective over $k$}, q \in \bbZ \},
\]
and retracts thereof.

\end{defn}

We note that the mapping 
spaces in $\mbf{Chow}_{\infty}(k,R)$ are not discrete. Indeed, suppose that $X$ is a smooth $k$-scheme and $Y$ is a smooth projective $k$-scheme which is of pure dimension $d$. Then, if $k$ is a \emph{perfect field}, Friedlander-Voevodsky/Atiyah duality (\cite{Friedlander:2000,Riou}) gives us a computation of the dual in $\mbf{DM}(k;R)$
\[
M(Y)^{\vee} \cong M(Y)(-d)[-2d].
\]
Therefore we have the following computation for all $j \geq 0$:

\begin{eqnarray*}
\pi_j\Maps(M(X)(n)[2n], M(Y)(m)[2m]) & \cong & \pi_0\Maps(M(X), M(Y)(m-n)[2m-2n-j])\\
& \cong & \pi_0\Maps(M(X) \otimes M(Y), M(k)(m-n+d)[2m-2n-j+2d])\\
& \cong &  \pi_0\Maps(M(X \times Y), M(k)(m-n+d)[2m-2n-j+2d])\\
& \cong & H_{\mot}^{2(m+d-n)-j, m+d-n}(X \times Y)\\
& \cong & CH^{m+d-n}(X \times Y; j).
\end{eqnarray*}
This can be non-zero for $j>0$. This calculation leads to the following lemma.

\begin{lem} \label{lem:connective} Let $k$ be a perfect field, $X$ be a smooth $k$-scheme, $Y$ a smooth projective $k$-scheme of dimension $d$ and $n, m \in \bbZ$. Then, for any coefficient ring $R$, the mapping spectrum
\[
\mrm{maps}_{\DM_{\mrm{gm}}(k; R)}(M(X)(n)[2n], M(Y)(m)[2m])
\]
is connective.
\end{lem}

\begin{proof}
By the computation above, we need to show that the group
\[
\pi_j\mrm{maps}(M(X)(n)[2n], M(Y)(m)[2m]) \cong H_{\mot}^{2(m+d-n)-j, m+d-n}(X \times Y;R),
\]
is zero whenever $j < 0$. Without loss of generality, we might as well assume that $R= \bbZ$. By way of comparison with Bloch's higher Chow groups \cite{Voevodsky:2002b}:
\[
H^{p,q}_{\mrm{mot}}(X; \bbZ) \cong \mrm{CH}^q(X, 2q - p),
\]
we see that the desired vanishing is true since $\mrm{CH}^q(X, i)$ is zero whenever $i < 0$.
\end{proof}

The following is a result of Bondarko which we give a proof of for the reader's convenience. We remark that, in what follows, $R$ must be a ring in which the characteristic of $k$ is invertible (e.g. $\bbZ[\tfrac{1}{e}]$). This lets us assume that $k$ is perfect and, more importantly, allows us to use smooth \emph{projective} $k$-schemes as generators for geometric motives.

\begin{thm}[\cite{bondarko-weights, bondarko-resol}] Let $k$ be a field and suppose that $e$ is the exponential characteristic of $k$. Assume that $R$ is a ring where $e$ is invertible. Then there exists a bounded weight structure on $\DM_{\mrm{gm}}(k;R)$ such that $\DM_{\mrm{gm}}(k,R)^{\heartsuit_w} \simeq \mbf{Chow}_{\infty}(k,R)$.
\end{thm}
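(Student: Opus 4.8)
The plan is to deduce the statement from the recognition principle for bounded weight structures recorded in Remark~\ref{rem:constructing_weights}. Write $\sC := \DM_{\mrm{gm}}(k;R)$ and take
\[
N := \{\, M(X)(q)[2q] : q \in \bZ,\ X \text{ smooth projective over } k \,\} \subset \sC.
\]
It then suffices to verify the two hypotheses of Remark~\ref{rem:constructing_weights}, namely that $N$ generates $\sC$ under finite limits, finite colimits and retracts, and that $N$ is negatively self-orthogonal. Granting these, Remark~\ref{rem:constructing_weights} produces a weight structure on $\sC$ whose heart $\sC^{\heartsuit_w}$ is the smallest retract-closed additive subcategory containing $N$; by Definition~\ref{def:chow} this subcategory is exactly $\mbf{Chow}_{\infty}(k;R)$, which is the required identification of the heart.

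Negative self-orthogonality is immediate from Lemma~\ref{lem:connective}: for $X,Y$ smooth projective and $n,m \in \bZ$ the spectrum $\mrm{maps}_{\sC}(M(X)(n)[2n], M(Y)(m)[2m])$ is connective (apply the lemma with the smooth scheme taken to be the smooth projective $X$; as in its proof, \cite{perfection} reduces us to $k$ perfect, where the vanishing range of motivic cohomology gives the claim).

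The essential --- and only genuinely nontrivial --- step is the generation statement. By construction $\sC$ is generated under finite limits, colimits and retracts by the objects $M(X)(q)$ with $X$ smooth and $q \in \bZ$, so it is enough to show each such $M(X)$ lies in the subcategory generated, under the same operations, by the Tate twists of motives of smooth projective $k$-schemes. One argues by induction on $\dim X$: after inverting the exponential characteristic $e$ one may, using Gabber's refinement of de Jong's theory of alterations, produce a smooth projective compactification of $X$ with sufficient control on its boundary, and then the localization (Gysin) triangles and blow-up triangles express $M(X)$ in terms of the motive of a smooth projective scheme and of motives of lower-dimensional smooth schemes, closing the induction. This is precisely Bondarko's resolution-of-singularities argument; see \cite{bondarko-resol} and \cite{bondarko-weights} (and the work of Kelly). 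The hypothesis that $e$ is invertible in $R$ enters exactly here, replacing resolution of singularities by alterations in positive characteristic; this is the step I expect to be the main obstacle to write out in full, although it is entirely due to Bondarko.

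It remains to observe boundedness. Every object of $N$ lies in the heart, hence in $\sC_{w \ge 0} \cap \sC_{w \le 0}$, and any object of $\sC$ is obtained from finitely many elements of $N$ by finitely many shifts, finite (co)limits and retracts, each of which changes weights by a bounded amount; therefore $\sC = \bigcup_n\bigl(\sC_{w \ge -n} \cap \sC_{w \le n}\bigr)$, so the weight structure is bounded in the sense of Definition~\ref{def:weight}. Combined with the identification of the heart above, this yields a bounded weight structure on $\DM_{\mrm{gm}}(k;R)$ with $\DM_{\mrm{gm}}(k,R)^{\heartsuit_w} \simeq \mbf{Chow}_{\infty}(k,R)$, as desired.
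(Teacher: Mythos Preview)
Your proposal is correct and follows essentially the same approach as the paper: both reduce to the recognition principle of Remark~\ref{rem:constructing_weights}, invoke Lemma~\ref{lem:connective} for negative self-orthogonality, and then appeal to the generation of $\DM_{\mrm{gm}}(k;R)$ by Tate twists of smooth projective motives after inverting $e$. The only differences are cosmetic: the paper cites \cite[Proposition~5.5.3]{shane_thesis} for generation where you sketch the alteration argument, and your separate verification of boundedness is unnecessary since the construction in Remark~\ref{rem:constructing_weights} already yields a bounded weight structure.
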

\begin{proof}
By \cite{perfection} we can assume $k$ is perfect.
By Remark~\ref{rem:constructing_weights} it suffices to check that the collection $\{ M(X)(q)[2q] \}$
where $X$ is a smooth projective $k$-scheme and $q \in \bbZ$ is a collection of generators for $\DM_{\mrm{gm}}(k,R)$ and that the mapping spectrum
$$\mrm{maps}_{\DM_{\mrm{gm}}(k; R)}(M(X)(n)[2n], M(Y)(m)[2m])$$
is connective for any smooth projective $X,Y$ and any $n,m \in \mathbb{Z}$.
The first fact follows from \cite[Proposition~5.3.3]{ShaneAsterisque}, the second fact was already proved in Lemma~\ref{lem:connective}.

\end{proof}

\begin{rem}\label{rem:relative-motives}
The analogous weight structure can be constructed more generally on the $\infty$-categories of relative geometric motives
$\DM_{\mrm{gm}}(S;\bbQ)$ (see \cite{bondarko-relative-rational}) for quasi-excellent
finite dimensional separated schemes $S$ and even on some subcategories of $\DM_{\mrm{gm}}(S;\bbZ[1/p])$, where $S$ is a scheme of exponential characteristic $p$ (see \cite{integral-cdh-motives}).
Moreover, similar weight structures can be constructed on the $\infty$-category of compact
$KGL_S$-modules and the $\infty$-category of compact $MGL_S$-modules (\cite[4]{weight-examples}).
\end{rem}

\sssec{Stacky examples}\label{ssec:stacks-weights}

In this section, we will furnish some examples from the context of algebraic stacks.

\begin{defn} \label{def:groups}
Let $k$ be a discrete commutative ring.
\begin{itemize}
\item
A {\bf group scheme} over $k$ is a group object in the category of $k$-schemes. A group scheme is said to be flat if it is flat as a $k$-scheme.

\item
A flat group scheme $G$ is called {\bf embeddable} if there exists a homomorphism
$G \to GL_n(k)$
which is a closed immersion \cite[Definition 2.1(1)]{ahr}.

\item
A flat group scheme $G$ is called {\bf linearly reductive} if the functor of taking invariants

$$\mbf{D}_{\mrm{qc}, BG} \to \mbf{D}_{\mrm{qc}, k}$$

is t-exact for the standard $t$-structure; this is the same as saying that the canonical morphism $\pi:BG \rightarrow \Spec k$ is cohomologically affine in the sense of \cite{alper}.

\item
A flat group scheme $G$ is called {\bf nice} if it is an extension
\[
1 \rightarrow G^0 \rightarrow G  \rightarrow H \rightarrow 1;
\]
where $G^0$ is closed, normal subgroup of multiplicative type (e.g. a torus) and $H$ is finite, locally constant with $|H|$ invertible in $k$ \cite[Definition 1.1]{HallRydh2}.
%
\end{itemize}
\end{defn}

We refer to \cite[Section 2]{ahr} for a more thorough exposition, but let us give some explanation and examples for the reader's convenience.

\begin{exam}\label{rem:nice} It is standard that a constant, finite group with order invertible on the base is linearly reductive. Furthermore, according to \cite[Proposition 12.17(2)]{alper}, linearly reductive groups are closed under extensions. Therefore any nice group scheme is linearly reductive.
\end{exam}

\begin{exam}\label{k:zero} Let $k$ be a ring of characteristic zero (so a $\bbQ$-algebra). Then, by \cite[Lemma 4.1.6, Remark 9.1.3]{alper2}, the notion of being linearly reductive is equivalent to the notion of being geometrically reductive (note that if $G$ is linearly reductive then it is always geometrically reductive). This latter notion is equivalent to $G$ being reductive in the classical sense whenever $G$ is smooth with geometrically connected fibers \cite[Theorem 9.7.5]{alper2}.
Hence, in this setting, the reader should feel free to substitute the notion of linear reductivity with the classical notion of reductivity.
\end{exam}

\begin{exam}\label{exam:k-p} Let $k$ be a field of characteristic $p > 0$; from the perspective of topological cyclic homology this is the primary situation of interest.
Then, being linearly reductive is equivalent to being nice \cite[Theorem 1.2]{HallRydh2}. Examples include tori, the roots of unity sheaves $\mu_{p^n}$ and constant group schemes of order prime to $p$. If $k$ is a ring of mixed characteristic, then \cite[Theorem 19.9]{ahr} gives a classification result: any linearly reductive group $G$ over $k$ is canonically an extension of a finite, tame linearly reductive group scheme by a smooth linearly reductive group scheme.
\end{exam}
%

If $X$ is a quasicompact and quasiseparated derived algebraic stack we recall $\Perf_X \subset \mbf{D}_{\mrm{qc},X}$ is the subcategory of spanned by those objects $M$ such that for any morphism from an affine derived scheme $x:\Spec A \rightarrow X$, $x^*M$ is a perfect $A$-module; such an object is a {\bf perfect complex} on $X$. The construction of weight structures on perfect complexes on stacks can be found in joint work of the second author with Bachmann, Khan and Ravi \cite[Example~5.1.2]{BKRS} which is a generalization of \cite[Theorem 3.14]{sosnilo2021regularity}. In all the cases of the next theorem, $\Perf_X$ coincides with compact objects in $\mbf{D}_{\mrm{qc},X}$ \cite[Theorems B and C]{HallRydh2}. Here we provide the details.

\begin{thm} \label{thm:stacks-weights}
Let $G$ be an embeddable linearly reductive group scheme over a commutative ring $k$.
\begin{enumerate}
\item
Let $R$ be a connective $\mathbb{E}_{\infty}$-$k$-algebra endowed with an action of $G$.
There is a bounded weight structure on $\Perf_{[\Spec R/G]}$ whose heart is the full subcategory
generated under retracts by the set
\[
N := \{ p^*\sE: \text{$\sE$ is a finite $G$-equivariant $k$-module \sE} \},
\]
where $p\colon [\Spec R/G] \rightarrow BG$ is the structure morphism.

\item
Let $R \stackrel{f}\to S$ be a $G$-equivariant morphism of connective $\mathbb{E}_{\infty}$-$k$-algebras endowed with an action of $G$.
Then the base change functor $\Perf_{[\Spec R/G]} \to \Perf_{[\Spec S/G]}$ is weight exact.
\end{enumerate}
\end{thm}
\begin{proof}
We apply the construction from Remark~\ref{rem:constructing_weights} to the set $N$. It suffices to show that
\begin{enumerate}
  \item[(a)] $\Perf_{[\Spec R/G]}$ is generated under finite limits, finite colimits and retracts by the
  set $N$,
  \item[(b)] for any $G$-equivariant $k$-modules $\sE$, $\sE'$ the spectrum
  $$\mrm{maps}_{\Perf_{[\Spec R/G]}}(p^*\sE, p^*\sE')$$
  is connective.
\end{enumerate}
Note first that the $\infty$-category $\mbf{D}_{\mrm{qc}, [\Spec R/G]}$ is
compactly generated by $N$. When $R=k$, this follows from \cite[Proposition~8.4]{HallRydh1} as $BG$ has
the resolution property.
This is also true in general since the morphism $[\Spec R/G] \stackrel{p}\to BG$ is affine
(see \cite[Remark~2.2.1]{BKRS}).
In particular, this implies that the subcategory of compact objects in $\mbf{D}_{\mrm{qc}, [\Spec R/G]}$ is
generated by $N$ under finite limits, finite colimits and retracts.
To prove (a) it suffices to show that every perfect complex is compact in
$\mbf{D}_{\mrm{qc}, [\Spec R/G]}$. The set $N$ contains the tensor unit of $\mbf{D}_{\mrm{qc}, [\Spec R/G]}$, in particular
the tensor unit is a compact object. Now the claim follows from \cite[Lemma~4.4(2)]{HallRydh1} whose proof works verbatim for derived stacks.
To prove (b) we use the equivalence
$$\mrm{maps}_{\Perf_{[\Spec R/G]}}(p^*\sE, p^*\sE') \cong \mrm{maps}_{\Perf_{R}}(p^*\sE, p^*\sE')^G$$
given by descent and the fact that taking invariants preserves connectivity by definition of linear reductivity.

The second part of the statement follows by construction. 
\end{proof}

\section{Nilpotent extensions of additive $\infty$-categories} \label{sec:nilp} In this section, we introduce nilpotent extensions of additive $\infty$-categories as a generalization of a nilpotent extension of connective $\bbE_1$-rings. We recall that the latter means an $\bbE_1$-morphism of connective $\bbE_1$-rings $R \rightarrow S$ such that the induced map $\pi_0(R) \rightarrow \pi_0(S)$ has nilpotent kernel. We will also provide an extended example in the form of square zero extensions of additive $\infty$-categories.

\ssec{The definition of a nilpotent extension of additive $\infty$-categories} We now introduce the first main definition of this paper.
\begin{defn} \label{def:nilp} A {\bf nilpotent extension} of additive $\infty$-categories is an additive functor between additive $\infty$-categories:
\[
f: \sA \rightarrow \sB,
\] such that:
\begin{enumerate}
\item $f$ is essentially surjective,
\item for all objects $x,y \in \sA$ the map
\[
\pi_0\Maps_{\sA}(x,y) \rightarrow \pi_0\Maps_{\sB}(f(x), f(y))
\]
is a surjection,
\item there exists $n \in \mathbb{N}$ such that
for any sequence of composable morphisms $f_1, \cdots, f_n \in \sA$ for which each $f(f_i)$ is trivial\footnote{We note that any additive $\infty$-category is pointed, hence a map $f: x \rightarrow y$ is said to be {\bf trivial} if it is homotopic to $f: x \rightarrow 0 \rightarrow y$.},
the composition $f_1\circ \cdots\circ f_n$ is trivial.
\end{enumerate}
The third condition is equivalent to saying that the kernel ideal of the functor $h(\sA) \to h(\sB)$ is nilpotent.
\end{defn}

\begin{rem}\label{rem:exponent} We note that the integer $n$ that appears in part (3) of Definition~\ref{def:nilp} only depends on the additive functor $f$.
\end{rem}


\begin{prop}\label{prop:cons} Suppose that $f: \sA \rightarrow \sB$ is a nilpotent extension of additive $\infty$-categories. Then $f$ is conservative.
\end{prop}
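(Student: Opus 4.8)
The plan is to reduce conservativity to a statement about idempotents in the homotopy categories, and then exploit the nilpotence of the kernel ideal. Let $\phi \colon x \to y$ be a morphism in $\sA$ such that $f(\phi)$ is an equivalence in $\sB$; I want to show $\phi$ is an equivalence in $\sA$. Since equivalences in an $\infty$-category are detected on the homotopy category (a map is an equivalence iff it becomes an isomorphism in $h(\sA)$, as mapping spaces only see $\pi_0$ for invertibility), it suffices to work entirely in $h(\sA)$ and $h(\sB)$, which are ordinary additive categories by Proposition~\ref{prop:ggn}, with $h(f)$ an additive functor whose kernel ideal $I$ is nilpotent, say $I^n = 0$.

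The key step is the standard ``lifting idempotents/units along a nilpotent ideal'' argument, carried out in the additive (many-object) setting. Choose, using surjectivity on $\pi_0$ (condition (2) of Definition~\ref{def:nilp}), a morphism $\psi \colon y \to x$ in $h(\sA)$ lifting the inverse of $h(f)(\phi)$. Then $1_x - \psi\phi \in I(x,x)$ and $1_y - \phi\psi \in I(y,y)$, so both are nilpotent elements of the rings $\End_{h(\sA)}(x)$ and $\End_{h(\sA)}(y)$ (nilpotency of the ideal $I$ gives $(1-\psi\phi)^n = 0$ after expanding, since all terms of the product lie in $I$; more carefully one uses that $I$ is a two-sided ideal with $I^n=0$, so any product of $n$ elements of $I$ vanishes). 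Hence $\psi\phi$ and $\phi\psi$ are units in those endomorphism rings, with inverses given by the usual geometric series $\sum_{k=0}^{n-1}(1-\psi\phi)^k$. From $\psi\phi$ and $\phi\psi$ being invertible it follows by a one-line diagram chase that $\phi$ has both a left and a right inverse in $h(\sA)$, hence is an isomorphism there; therefore $\phi$ is an equivalence in $\sA$.

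The main obstacle — really the only subtle point — is making sure the reduction to $h(\sA)$ is legitimate and that ``nilpotent ideal'' is used correctly: one must check that condition (3) of Definition~\ref{def:nilp} (composites of $n$ morphisms killed by $h(f)$ are trivial) is exactly the statement that the kernel ideal $I$ of $h(f)$ satisfies $I^n = 0$ as a two-sided ideal, which is remarked immediately after the definition. Given that, the geometric-series inverses make sense because $\End_{h(\sA)}(x)$ is an honest associative ring (additive enrichment) in which $1_x - \psi\phi$ is nilpotent. I expect the write-up to be short: essentially one paragraph reducing to $h(\sA)$, and one paragraph running the classical Nakayama-style argument for invertibility modulo a nilpotent ideal.
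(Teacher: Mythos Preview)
Your proposal is correct and follows essentially the same argument as the paper: lift an inverse of $f(\phi)$ using condition~(2), observe that $\id - \psi\phi$ and $\id - \phi\psi$ lie in the nilpotent kernel ideal so their geometric series furnish inverses, and conclude that $\phi$ is an isomorphism in $h(\sA)$. The paper's proof is slightly terser (it does not explicitly state the reduction to $h(\sA)$), but the substance is identical.
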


\begin{proof} Let $g: x \rightarrow y$ be a morphism in $\sA$. By hypothesis, $f(g)$ is invertible in $\sB$, hence we may pick an inverse $h': f(y) \rightarrow f(x)$. Using (2) of Definition~\ref{def:nilp}, we may choose a lift of $h'$ which we call $h$. We claim that $h$ is an inverse of $g$. Indeed, it suffices to prove that $gh$ and $hg$ are invertible. Since the arguments are the same we do it for $gh$. First note that $f(\id - gh) \simeq \id - f(g)f(h) \simeq 0$ since $f$ is an additive functor and $f(h)$ is an inverse of $f(g)$. Therefore, by assumption (3) of Definition~\ref{def:nilp}, there exists an $n$ such that $(\id - gh)^n \simeq 0$. But now, we can furnish and inverse to $gh$ given by
\[
(gh)^{-1} \simeq (\id - (\id - gh))^{-1} \simeq \id + (\id - gh) + (\id - gh)(\id - gh) + \cdots + (\id - gh)^{n-1}.
\]

\end{proof}

%

\begin{rem}\label{rem:bijection-on-objects}
After Proposition~\ref{prop:cons}, we can replace part (1) of Definition~\ref{def:nilp} by saying that the induced map on equivalence classes of objects:
\[
\pi_0(\sA^{\simeq}) \rightarrow \pi_0(\sB^{\simeq})
\]
is a bijection. 
\end{rem}

\begin{exam} \label{exam:one-object} Suppose that $R, S$ are connective $\mathbb{E}_1$-ring spectra. Then we can consider the additive $\infty$-categories $\mbf{Free}_R$, $\mbf{Free}_S$ of free modules over these rings. A morphism of $\mathbb{E}_1$-rings $f:R \rightarrow S$ then defines an additive functor $f: \mbf{Free}_R \rightarrow \mbf{Free}_S$. The requirement that the extension is nilpotent is then equivalent to asking that the map $\pi_0(R) \rightarrow \pi_0(S)$ has a kernel which is a nilpotent ideal.
\end{exam}

\begin{exam}\label{exam:hocat} What follows is arguably the most important example from the point of view of this paper in the same way that if $A$ is a ``derived ring" (an animated ring or a connective $\mathbb{E}_1$-algebra) then $A \rightarrow \pi_0(A)$ is a nilpotent extension. Let $\sA$ be an additive $\infty$-category and let $h(\sA)$ be its homotopy category which is a $1$-category. Then the map $\sA \rightarrow h(\sA)$ is evidently a nilpotent extension of additive $\infty$-categories.
\end{exam}


\ssec{Square zero extensions of additive $\infty$-categories}\label{sec:sqz} In this next section, we furnish a large collection of examples of nilpotent extensions of additive $\infty$-categories by introducing the notion of a square zero extension of additive $\infty$-categories. We emphasize that, unlike the proof of the DGM theorem for connective $\bbE_1$-ring spectra, this theory is \emph{not} necessary for the proof. On the other hand, they provide a large collection of new functors for which we can prove the DGM theorem.

\begin{defn} \label{def:bimodule} Let $\sA$ be an additive $\infty$-category. An {\bf $\sA$-bimodule} is a biadditive\footnote{By this we mean a functor which is additive in each variable. In the presence of tensor products of additive $\infty$-categories as in \cite[Sections 10.1.6, D.2.1.1]{SAG}, we can say that such a functor defines an additive functor $\sA^{\op} \otimes \sA \rightarrow \Spt_{\geq 0}$} functor
\[
\sM:\sA^{\op} \times \sA \rightarrow \Spt_{\geq 0}.
\]
By adjunction the functor gives rise to an additive functor
$$\sM:\sA \to \Fun^{\times}(\sA^{\op},\Spt)$$
which we refer to by the same letter.

\end{defn}

\begin{rem} \label{rem:bimodule}
Unpacking the structure of an $\sA$-bimodule, we get the following pieces of data:

\begin{enumerate}
\item Fixing an object $y \in \sA$, for any morphism $g: c \rightarrow c'$ in $\sA$ we get a map of connective spectra:
\[
g^*:\sM(c',y) \rightarrow \sM(c,y),
\]
as part of the functor
\[
\sM(-,y): \sA^{\op} \rightarrow \Spt_{\geq 0}.
\]
In particular, this endows $\sM(y,y) \in \Spt_{\geq 0}$ with the structure of a right $\mbf{End}_{\sA}(y,y)$-module.
\item Fixing an object $z \in \sA$, for any morphism $g: c \rightarrow c'$ in $\sA$ we get a map of connective spectra:
\[
g_*:\sM(z,c) \rightarrow \sM(z,c'),
\]
as part of the functor
\[
\sM(z,-): \sA \rightarrow \Spt_{\geq 0}.
\]
In particular, this endows $\sM(z,z) \in \Spt_{\geq 0}$ the structure of a left $\mbf{End}_{\sA}(z,z)$-module.
\item Altogether, we can package the two structures above by saying that for each  $z \in \sC$, $\sM(z,z)$ is an $\mbf{End}_{\sA}(z,z)$-bimodule.
\end{enumerate}
\end{rem}

We would like to make sense of the square zero extension of additive $\infty$-categories, which we denote as $\sA \oplus \sM$. To do so, we first recall the formalism of lax equalizers \cite[Section II.1]{nikolaus-scholze} which is a special case of lax pullbacks as in \cite[Section I]{tamme-excision}.

\begin{constr} \label{cons:land-tamme} Let $\sA, \sB$ be $\infty$-categories equipped with functors $F, G: \sA \rightarrow \sB$. The {\bf lax equalizer} of $F$ and $G$
\[
\mrm{LEq}(F, G)
\]
is the $\infty$-category given by the pullback
\[
\begin{tikzcd}\label{laxeq_constr}
\mrm{LEq}(F, G) \ar{d} \ar{r} & \Fun(\Delta^1,\sB) \ar{d}{(\on{res}_{\delta_0},\on{res}_{\delta_1})} \\
\sA \ar{r}{(F,G)} & \sB \times \sB.
\end{tikzcd}
\]
The objects of $\mrm{LEq}(F, G)$ are given by pairs $(c, f)$ where $c \in \sA$ and a morphism $f: F(c) \rightarrow G(c)$ in $\sB$. The mapping spaces in $\mrm{LEq}(F, G)$ are given by an equalizer diagram
\[
\Maps_{\mrm{LEq}(F, G)}((c,f), (c', f'))) \rightarrow \Maps_{\sA}(c, c') \rightrightarrows \Maps(F(c), G(c'))
\]
where one of the right maps is given by
\[
\Maps_{\sA}(c, c') \rightarrow \Maps_{\sB}(Fc, Fc') \xrightarrow{f'_*} \Maps_{\sB}(Fc, Gc'),
\]
and the other is
\[
\Maps_{\sA}(c, c') \rightarrow \Maps_{\sB}(Gc, Gc') \xrightarrow{f^*} \Maps_{\sB}(Fc, Gc').
\]
\end{constr}

\begin{lem}\label{lem:add-lax} Suppose that $\sA, \sB$ are additive $\infty$-categories with additive functors
\[
F, G:\sA \rightarrow \sB.
\] Then, $\mrm{LEq}(F, G)$ is an additive $\infty$-category and the functor $\mrm{LEq}(F, G) \rightarrow \sA$ is additive.
\end{lem}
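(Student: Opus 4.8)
The plan is to write down the semiadditive structure on $\mrm{LEq}(F,G)$ by hand, using the description of its objects and mapping spaces recorded in \constrref{cons:land-tamme}, and then to promote semiadditivity to additivity via \propref{prop:ggn}. Recall from that construction that an object of $\mrm{LEq}(F,G)$ is a pair $(c,f)$ with $c\in\sA$ and $f\colon Fc\to Gc$ in $\sB$; that a morphism $(c,f)\to(c',f')$ consists of a morphism $\alpha\colon c\to c'$ of $\sA$ together with a homotopy $f'\circ F\alpha\simeq G\alpha\circ f$; and that $\Maps_{\mrm{LEq}(F,G)}((c,f),(c',f'))$ is the equalizer of the two maps $\Maps_{\sA}(c,c')\rightrightarrows\Maps_{\sB}(Fc,Gc')$ given by $\alpha\mapsto f'\circ F\alpha$ and $\alpha\mapsto G\alpha\circ f$. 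Since $F$ and $G$ are additive they preserve the zero object and are additive on mapping spaces; in particular they carry the canonical equivalences $F(c\oplus c')\simeq Fc\oplus Fc'$ and $G(c\oplus c')\simeq Gc\oplus Gc'$, together with the associated inclusions and projections, to the corresponding data in $\sB$.

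First I would check that $\mrm{LEq}(F,G)$ is semiadditive and that the projection $p\colon\mrm{LEq}(F,G)\to\sA$ preserves finite coproducts. The pair $(0,0)$, with $0\colon F0\to G0$ the zero map, is a zero object, since the equalizer formula shows $\Maps_{\mrm{LEq}(F,G)}((0,0),-)$ and $\Maps_{\mrm{LEq}(F,G)}(-,(0,0))$ are contractible. For $(c,f)$ and $(c',f')$ set $(c,f)\oplus(c',f'):=(c\oplus c',\,f\oplus f')$, where $f\oplus f'\colon F(c\oplus c')\simeq Fc\oplus Fc'\to Gc\oplus Gc'\simeq G(c\oplus c')$ is block diagonal; the inclusions and projections of $c\oplus c'$ in $\sA$ then lift to $\mrm{LEq}(F,G)$ precisely because $f\oplus f'$ is block diagonal. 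To see this object is at once the product and the coproduct of $(c,f)$ and $(c',f')$, use the equalizer formula together with the splittings $\Maps_{\sA}(c\oplus c',-)\simeq\Maps_{\sA}(c,-)\times\Maps_{\sA}(c',-)$ and $\Maps_{\sB}(F(c\oplus c'),-)\simeq\Maps_{\sB}(Fc,-)\times\Maps_{\sB}(Fc',-)$: using additivity of $F$ and $G$ to identify the images of the biproduct inclusions and projections, one checks that the two parallel maps of the equalizer become the products of the corresponding parallel maps for $(c,f)$ and $(c',f')$; since finite products commute with equalizers, this gives $\Maps_{\mrm{LEq}(F,G)}((c,f)\oplus(c',f'),-)\simeq\Maps_{\mrm{LEq}(F,G)}((c,f),-)\times\Maps_{\mrm{LEq}(F,G)}((c',f'),-)$ and, dually, the analogous identification of maps out of a fixed object. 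Hence $\mrm{LEq}(F,G)$ has finite products and coproducts and the canonical comparison map is an equivalence, so it is semiadditive; and by construction $p$ sends the biproduct and its structure maps to the biproduct and its structure maps of $\sA$, so $p$ preserves finite coproducts, in particular the zero object, hence all biproducts.

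Next I would show $\mrm{LEq}(F,G)$ is additive. By \propref{prop:ggn} it suffices to check that the semiadditive $1$-category $h\mrm{LEq}(F,G)$ is additive, i.e. that the commutative monoid $\pi_0\Maps_{\mrm{LEq}(F,G)}((c,f),(c',f'))$ is a group for all $(c,f),(c',f')$. Since $\sB$ is additive, $\Maps_{\sB}(Fc,Gc')$ is a grouplike $\mathbb{E}_\infty$-space, so the equalizer above is the fiber over the basepoint of the difference map $\alpha\mapsto f'\circ F\alpha - G\alpha\circ f$, which is a map of grouplike $\mathbb{E}_\infty$-spaces (the source $\Maps_{\sA}(c,c')$ is grouplike, and $\alpha\mapsto F\alpha$, $\alpha\mapsto G\alpha$, and pre- and post-composition are all additive). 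The long exact sequence of this fibration then exhibits $\pi_0\Maps_{\mrm{LEq}(F,G)}((c,f),(c',f'))$ as an extension of a subgroup of $\pi_0\Maps_{\sA}(c,c')$ by a quotient of $\pi_1\Maps_{\sB}(Fc,Gc')$, hence as a group. Therefore $\mrm{LEq}(F,G)$ is additive, and since $p$ preserves biproducts it is an additive functor, completing the proof.

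The only step that is not purely formal is the identification, inside the equalizer formula, of the two parallel maps out of $\Maps_{\sA}(c\oplus c',-)$ with the products of the parallel maps for the two summands; this is precisely where additivity of the functors $F$ and $G$ — and not merely of $\sA$ and $\sB$ — is used, and it is the crux of the argument. Everything else is either unwinding the definition of (semi)additivity or a routine long-exact-sequence computation.
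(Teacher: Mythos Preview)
Your proof is correct and takes a more hands-on route than the paper's. The paper argues abstractly: it notes that $\Fun(\Delta^1,\sB)$ is additive (limits and colimits being pointwise), observes that $\mrm{LEq}(F,G)$ is by construction a pullback of additive $\infty$-categories along additive functors, and then invokes the general fact that such a pullback is additive---because mapping spaces in the pullback are pullbacks of grouplike $\mathbb{E}_\infty$-spaces, hence grouplike, and one appeals to \propref{prop:ggn}. That is essentially a two-line argument once one is willing to quote closure of additive $\infty$-categories under limits in $\Cat_\infty$.

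Your approach instead unwinds the object-and-mapping-space description of $\mrm{LEq}(F,G)$ from \constrref{cons:land-tamme} and verifies the axioms directly: you build biproducts by hand as $(c\oplus c',\,f\oplus f')$, check the universal property via the equalizer formula, and then upgrade semiadditivity to additivity by rewriting the equalizer as the fiber of a difference map of grouplike $\mathbb{E}_\infty$-spaces. This buys self-containment---you never need to know that additivity is preserved by limits---at the cost of more bookkeeping. One small point worth making explicit in your write-up: the grouplike $\mathbb{E}_\infty$-structure you obtain on $\Maps_{\mrm{LEq}}$ as the fiber of $d$ coincides with the monoid structure coming from the biproduct you constructed, since both are induced from the biproduct structures on $\sA$ and $\sB$ through the equalizer; this is what ensures the monoid you must show is a group really is the one the long exact sequence identifies as a group.
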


\begin{proof}
$\Fun(\Delta^1,\sB)$ is additive since limits and colimits in a functor category are computed pointwise (see \cite[Corollary~5.1.2.3]{HTT}).
It suffices to show that a pullback of a diagram of additive $\infty$-categories is also an additive $\infty$-category.
This follows from general considerations about pullbacks of $\infty$-categories: if we have a cospan $\sA_0 \xrightarrow{f} \sA_1 \xleftarrow{g} \sA_2$, then $K$-shaped limits and colimits in the pullback $\infty$-category are computed pointwise if these limits and colimits are preserved by $f$ and $g$. This already proves that the pullback category is preadditive. To check the additivity condition we need only prove that the shear map is an equivalence, but equivalences in the pullback category are tested pointwise.
\end{proof}

\begin{constr} Given an additive $\infty$-category $\sA$ and an $\sA$-bimodule  $\sM$, we construct the {\bf square zero extension} $\sA \oplus \sM$ in the following manner: take the (pointwise) suspension of $\sM$ and adjoint to get a functor
\[
\sA \xrightarrow{\Sigma\sM} \Fun^{\times}(\sA^{\op}, \Spt).
\]
The additive $\infty$-category $\sA \oplus \sM$ is then defined as the lax equalizer between the Yoneda functor $\sA \stackrel{\yo}\rightarrow \Fun^{\times}(\sA^{\op}, \Spt)$ and $\Sigma\sM$:
\[
\sA \oplus \sM:=\mrm{LEq}(\yo, \Sigma\sM).
\]
In other words, it is the pullback
\[
\begin{tikzcd}
\sA \oplus \sM\arrow[r]\arrow[d,swap,"p"] & \Fun(\Delta^1,\Fun^{\times}(\sA^{\op}, \Spt))\arrow[d,"{(\on{res}_{\delta_0},\on{res}_{\delta_1})}"]\\
\sA\arrow[r, "{(\yo, \Sigma \sM)}"] &\Fun^{\times}(\sA^{\op}, \Spt) \times \Fun^{\times}(\sA^{\op}, \Spt).
\end{tikzcd}
\]

\end{constr}

\begin{lem}\label{lem:add} The $\infty$-category $\sA \oplus \sM$ is an additive $\infty$-category and the functor $p:\sA \oplus \sM \rightarrow \sA$ is additive.
\end{lem}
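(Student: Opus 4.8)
The plan is to read this lemma off directly from \lemref{lem:add-lax}. By construction $\sA\oplus\sM=\mrm{LEq}(\yo,\Sigma\sM)$ is the lax equalizer of two functors $\yo,\Sigma\sM\colon\sA\to\Fun^{\times}(\sA^{\op},\Spt)$, with $p$ the canonical projection to $\sA$. So \lemref{lem:add-lax} gives the statement as soon as we check (i) that the target $\Fun^{\times}(\sA^{\op},\Spt)$ is an additive $\infty$-category, and (ii) that both $\yo$ and $\Sigma\sM$ are additive functors; then ``$\sA\oplus\sM$ additive, $p$ additive'' is immediate.

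For (i): $\Spt$ is stable, hence additive, so $\Fun(\sA^{\op},\Spt)$ is additive because finite limits and colimits in a functor category are computed pointwise (\cite[Corollary~5.1.2.3]{HTT}). The full subcategory $\Fun^{\times}(\sA^{\op},\Spt)$ of product-preserving functors is closed under the finite products and finite coproducts of $\Fun(\sA^{\op},\Spt)$ --- a finite (co)product of product-preserving functors stays product-preserving, using that finite products and coproducts agree in $\Spt$ --- so the biproduct identification and the invertibility of the shear map, all detected pointwise, pass to $\Fun^{\times}(\sA^{\op},\Spt)$; hence it is additive. (One could instead invoke the identification $\Pre_{\Sigma}(\sA)\simeq\Fun^{\times}(\sA^{\op},\Spt_{\geq 0})$ recalled earlier together with its prestability.) For the $\Sigma\sM$ half of (ii): the adjoint functor $\sM\colon\sA\to\Fun^{\times}(\sA^{\op},\Spt)$ of the bimodule is additive by \defref{def:bimodule}, biadditivity of $\sM$ being precisely additivity of this adjoint; pointwise suspension $\Sigma$ is additive, since suspension in a stable $\infty$-category is exact and an additive functor applied pointwise stays additive; hence the composite $\Sigma\sM$ is additive.

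It remains to see that $\yo$ is additive. It factors as $\sA\to\Pre_{\Sigma}(\sA)\simeq\Fun^{\times}(\sA^{\op},\Spt_{\geq 0})\hookrightarrow\Fun^{\times}(\sA^{\op},\Spt)$, the last inclusion being additive; and the first map preserves finite coproducts, either by the universal property of $\Pre_{\Sigma}$, or directly because for a biproduct $x_1\oplus x_2$ in $\sA$ one has $\Maps_{\sA}(-,x_1\oplus x_2)\simeq\Maps_{\sA}(-,x_1)\times\Maps_{\sA}(-,x_2)$, which in the semi-additive $\infty$-category $\Pre_{\Sigma}(\sA)$ is the biproduct --- and a finite-coproduct-preserving functor between additive $\infty$-categories is automatically additive. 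I expect no genuine obstacle here: the only thing needing care is the harmless bookkeeping that $\yo$ and $\Sigma\sM$ a priori land in connective spectra, so one should either run the whole argument inside the additive, prestable $\infty$-category $\Pre_{\Sigma}(\sA)$ and then post-compose with the additive inclusion $\Spt_{\geq 0}\hookrightarrow\Spt$, or note that the lax equalizer is insensitive to this choice --- either way additivity is unaffected, and nothing beyond \lemref{lem:add-lax} is required.
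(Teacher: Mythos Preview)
Your proof is correct and follows exactly the paper's approach: the paper's proof consists of the single sentence ``This follows from Lemma~\ref{lem:add-lax}.'' You have simply spelled out the verification that the hypotheses of that lemma are met --- that $\Fun^{\times}(\sA^{\op},\Spt)$ is additive and that $\yo$ and $\Sigma\sM$ are additive functors --- which the paper leaves implicit.
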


\begin{proof} This follows from Lemma~\ref{lem:add-lax}.
\end{proof}

\begin{rem}\label{rem:describe}
Let us unpack the $\infty$-category $\sA \oplus \sM$. Its objects are pairs
\[
x=(c \in \sA , t :\yo(c) \rightarrow \Sigma \sM(-,c)).
\]
Now, since $\sM$ takes values in connective spectra, we have that $\pi_0(\Sigma\sM(c,c)) = 0$, hence the data of $t$ is, in a sense made precise in Lemma~\ref{lem:redund}, redundant.
Moreover, we can compute the mapping (connective) spectrum
\[
\mbf{Maps}_{\sA \oplus \sM}((c,t),(c',t'))
\]
as the equalizer of the maps (notation as in Remark~\ref{rem:bimodule})
\begin{equation}\label{defining_maps}
\begin{tikzcd}
\mbf{Maps}_{\sA}(c,c') \arrow[rr, shift right, swap, "f\mapsto f^*t"]\arrow[rr,shift left, "f\mapsto f_*t'"] &&\Sigma \sM(c,c').
\end{tikzcd}
\end{equation}
This means that a morphism $x = (c, t) \rightarrow x' = (c', t')$ can be viewed as the data of a morphism $f:c \rightarrow c'$ in $\sA$ and a ``loop" in $\Sigma\sM(c, c')$ identifying $f^*t$ with $f_*t'$, i.e., a point in $\sM(c,c')$. 

The fiber sequence defining the equalizer also gives rise to a fiber sequence
\begin{equation}\label{extension_sequence}
\sM(p(x),p(x')) \stackrel{j_{x,x'}}\to \mbf{Maps}_{\sA \oplus \sM}(x,x') \stackrel{p_{x,x'}}\to \mbf{Maps}_{\sA}(p(x),p(x')).
\end{equation}
of functors landing in connective spectra
\[
(\sA\oplus \sM)^{\op} \times (\sA \oplus \sM) \rightarrow \Spt_{\geq 0}.
\]
\end{rem}

\begin{lem}\label{lem:section}
  The map $\sA \oplus \sM \rightarrow \sA$ admits a section
  \[
  i: \sA \rightarrow \sA \oplus \sM
  \]
  that sends an object $c$ to $(c,0)$.
\end{lem}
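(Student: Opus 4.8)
The plan is to produce $i$ directly from the universal property of the pullback square that defines $\sA \oplus \sM = \mrm{LEq}(\yo, \Sigma\sM)$. Write $\sD := \Fun^{\times}(\sA^{\op}, \Spt)$, so that $\sA \oplus \sM$ is the fibre product of $\sA \xrightarrow{(\yo, \Sigma\sM)} \sD \times \sD$ and $\Fun(\Delta^1, \sD) \xrightarrow{(\on{res}_{\delta_0}, \on{res}_{\delta_1})} \sD \times \sD$, with $p$ the projection to $\sA$. By the universal property of this pullback, a section $i$ of $p$ amounts to the following data: (a) the functor $\id_{\sA}\colon \sA \to \sA$, which guarantees $p \circ i \simeq \id_{\sA}$; (b) a functor $\sA \to \Fun(\Delta^1, \sD)$, equivalently (by adjunction, using $\Fun(\sA, \Fun(\Delta^1,\sD)) \simeq \Fun(\Delta^1, \Fun(\sA,\sD))$) a morphism between two functors $\sA \to \sD$; and (c) a homotopy in $\sD \times \sD$ between $(\yo, \Sigma\sM)$ and the restriction-to-endpoints of the morphism chosen in (b).

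For (b) I would take the zero morphism $0\colon \yo \Rightarrow \Sigma\sM$ in $\Fun(\sA, \sD)$. This is available because $\sD$ is stable, hence pointed, so $\Fun(\sA, \sD)$ is pointed as well and carries a canonical (essentially unique) zero morphism $\yo \to 0 \to \Sigma\sM$ factoring through the zero functor, constant at $0 \in \Spt$. Its restrictions along $\delta_0$ and $\delta_1$ recover $\yo$ and $\Sigma\sM$, which supplies the identification required in (c). The universal property of the pullback then yields the desired functor $i\colon \sA \to \sA \oplus \sM$ with $p \circ i \simeq \id_{\sA}$.

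It remains to read off the effect of $i$ using the pointwise description of $\mrm{LEq}(\yo, \Sigma\sM)$ recalled in Remark~\ref{rem:describe}: $i$ sends $c$ to the pair $(c,\, 0_{\yo(c), \Sigma\sM(-,c)}) = (c,0)$ and a morphism $g$ to $(g,0)$, as claimed. Although the statement only asks for a section of the underlying functor, one also checks that $i$ is additive: the leg $\id_{\sA}$ is additive, and the functor $\sA \to \Fun(\Delta^1, \sD)$ classifying $0\colon \yo \Rightarrow \Sigma\sM$ is additive because biproducts in $\Fun(\Delta^1, \sD)$ are detected at the two endpoints, where $\yo$ and $\Sigma\sM$ preserve them; hence $i$ is the comparison map of a pullback of additive $\infty$-categories along additive functors, and is additive by (the argument of) Lemma~\ref{lem:add-lax}.

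I do not anticipate a serious obstacle. The only point requiring care is the coherence bookkeeping: exhibiting the zero transformation $\yo \Rightarrow \Sigma\sM$ as a genuine morphism in $\Fun(\sA, \sD)$ rather than an incoherent pointwise-zero family, and feeding it correctly into the universal property of the pullback. Pointedness of $\sD$, hence of $\Fun(\sA, \sD)$, is exactly what makes this step routine.
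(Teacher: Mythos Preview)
Your proof is correct and follows essentially the same route as the paper: both construct the section by exhibiting the zero morphism $\yo \Rightarrow \Sigma\sM$, available because the target $\sD$ is pointed. The paper packages this slightly differently, first building a section $(b,b') \mapsto (b \xrightarrow{0} b')$ of the evaluation map $\Fun(\Delta^1,\sD) \to \sD \times \sD$ as the direct sum of a right and a left Kan extension and then pulling it back along $(\yo,\Sigma\sM)$, but the content is the same.
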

\begin{proof}
  The right vertical map in the diagram (\ref{laxeq_constr}) admits a section
  given by the functor $(b,b') \mapsto (b \stackrel{0}\to b')$.
  Formally, this is the direct sum of functors
  $$\sB \times \sB \stackrel{p_1}\to \sB = \Fun(\Delta^0, \sB) \stackrel{\on{Ran}_{\delta_0}}\to \Fun(\Delta^1, \sB) \text{ and }$$
  $$\sB \times \sB \stackrel{p_2}\to \sB = \Fun(\Delta^0, \sB) \stackrel{\on{Lan}_{\delta_1}}\to \Fun(\Delta^1, \sB),$$
  where $\on{Ran}$ (resp. $\on{Lan}$) is right Kan extension (resp. Left Kan extension).
  This induces a section of the left vertical map by functoriality of pullbacks. It sends
  $c$ to $(c,0)$ by construction.
\end{proof}

\begin{prop}\label{prop:sqzero_is_nil}
The functor $p$ defined above is a nilpotent extension of additive $\infty$-categories.
\end{prop}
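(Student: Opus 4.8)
The plan is to verify the three conditions of Definition~\ref{def:nilp} for the functor $p:\sA\oplus\sM\to\sA$, using the fiber sequence \eqref{extension_sequence} as the main tool. First, essential surjectivity is immediate: the section $i$ of Lemma~\ref{lem:section} already shows every object $c\in\sA$ lifts to $(c,0)$, so condition (1) holds. For condition (2), I would look at the long exact sequence on homotopy groups induced by \eqref{extension_sequence}, namely
\[
\pi_0\sM(p(x),p(x'))\to\pi_0\Maps_{\sA\oplus\sM}(x,x')\xrightarrow{\;(p_{x,x'})_*\;}\pi_0\Maps_{\sA}(p(x),p(x'))\to\pi_{-1}\sM(p(x),p(x'));
\]
since $\sM$ takes values in connective spectra, $\pi_{-1}\sM(p(x),p(x'))=0$, so $(p_{x,x'})_*$ is surjective on $\pi_0$. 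Applying this with $x=i(c)$, $x'=i(c')$ gives exactly condition (2) of Definition~\ref{def:nilp} for $p$ (and in fact shows $p$ is even $\pi_0$-surjective on all mapping spectra).

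The substantive point is condition (3): I must produce an $n$ so that any composite of $n$ morphisms in $h(\sA\oplus\sM)$ each killed by $h(p)$ is trivial. The kernel ideal of $h(p)$ is, by the computation in Remark~\ref{rem:describe}, precisely the image of $j_{x,x'}$ on $\pi_0$, i.e.\ a morphism $x\to x'$ in $h(\sA\oplus\sM)$ lies in the kernel of $h(p)$ iff it is of the form $j_{x,x'}(m)$ for some $m\in\pi_0\sM(p(x),p(x'))$. I claim $n=2$ works: given two composable such morphisms $j_{x,x'}(m)$ and $j_{x',x''}(m')$, their composite is trivial. The key is that the bimodule structure maps on $\sM$ all factor through $p$ — that is, the functoriality of $\sM(-,-)$ used to build $\sA\oplus\sM$ is pulled back along $p\times p$ — so in the lax-equalizer the composite of two ``purely $\sM$-valued'' morphisms involves a map $\sM(p(x'),p(x''))\otimes\sM(p(x),p(x'))\to\sM(p(x),p(x''))$ which is zero, because a composition of morphisms whose images under $p$ are not themselves used (the $\sM$-part records a loop, not a map in $\sA$) forces the relevant composite to be nullhomotopic. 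Concretely, chasing the equalizer description of mapping spaces in $\mrm{LEq}(\yo,\Sigma\sM)$, the composite of $(0,m)$ and $(0,m')$ has underlying map $0$ in $\sA$, and the $\Sigma\sM$-component is the composite of two maps factoring through the suspension of a connective spectrum via a bilinear pairing that vanishes; hence the composite is $(0,0)$, the trivial morphism. I would make this precise by unwinding Construction~\ref{cons:land-tamme}: the composition in $\mrm{LEq}(F,G)$ is governed by composition in $\sA$ together with the action of $f^*$ and $f_*$, and when the underlying $\sA$-morphism is zero both actions are zero.

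The main obstacle I anticipate is bookkeeping in that last step — carefully identifying, inside the equalizer diagram defining $\Maps_{\sA\oplus\sM}((c,t),(c',t'))$, what the composition law actually does to a pair of morphisms lying in the kernel, and checking that ``underlying map in $\sA$ is zero'' genuinely forces the composite to be the basepoint rather than merely mapping to the basepoint under $h(p)$. Once the composition is written as a bilinear pairing on the connective spectra $\sM(p(x),p(x'))$ that is zero (because it passes through a null map in $\sA$), the nilpotence exponent is literally $2$, and condition (3) follows. Therefore $p$ satisfies all three conditions of Definition~\ref{def:nilp}, and by Remark~\ref{rem:exponent} the exponent depends only on $p$, completing the proof.
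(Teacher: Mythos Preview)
Your proposal is correct and follows essentially the same route as the paper: parts (1) and (2) are handled identically via the section and the fiber sequence~\eqref{extension_sequence}, and for (3) you correctly target $n=2$ using that the $\sM$-action factors through $p$. One clarification worth making: your framing in terms of a ``bilinear pairing'' $\sM(p(x'),p(x''))\otimes\sM(p(x),p(x'))\to\sM(p(x),p(x''))$ is a red herring---there is no multiplication on $\sM$ in play. The paper's execution of (3) is crisper: it uses bifunctoriality of the fiber sequence~\eqref{extension_sequence} to get a commutative square in which the map $f^*\colon\sM(p(x'),p(x''))\to\sM(p(x),p(x''))$ is literally $p(f)^*$ and hence nullhomotopic, and since $g=j_{x',x''}(m)$ one concludes $g\circ f \simeq j_{x,x''}(f^*m)\simeq 0$. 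Your final sentence (``when the underlying $\sA$-morphism is zero both actions are zero'') is exactly this observation, so your argument does go through once the bilinear-pairing language is dropped.
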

\begin{proof}
Part (1) of Definition~\ref{def:nilp} follows from Lemma~\ref{lem:section}.
Observing the fiber sequence (\ref{extension_sequence}) and using the fact that $\pi_0\Sigma\sM(c,c') = 0$ we also see that Part (2) is satisfied.
Let us prove (3); we claim that the $n$ appearing in (3) is just $2$. Indeed, suppose that $f: x \rightarrow x', g: x' \rightarrow x''$ are composable morphisms in $\sA \oplus \sM$ such that $p(f) \simeq p(g) \simeq 0$. We claim that $g \circ f \simeq 0$. Indeed, by the bifunctoriality of the fiber sequence~\eqref{extension_sequence} we have the following commutative diagram where the rows are fiber sequences of spectra, all of which are connective:
\[
\begin{tikzcd}
\sM(p(x'),p(x'')) \ar[swap]{d}{f^*}  \ar{r}{j_{x',x''}} & \mbf{Maps}_{\sA \oplus \sM}(x',x'') \ar{d}{f^*} \ar{r} & \mbf{Maps}_{\sA}(p(x'), p(x'')) \ar{d}{f^*} \\
\sM(p(x), p(x'')) \ar{r}{j_{x,x''}} & \mbf{Maps}_{\sA \oplus \sM}(x, x'') \ar{r} & \mbf{Maps}_{\sA}(p(x), p(x'')).
\end{tikzcd}
\]
Now since $g \in \mbf{Maps}_{\sA \oplus \sM}(x',x'')$ is such that $p(g) \simeq 0$, we can find $m \in \sM(p(x'),p(x''))$ and an equivalence $j_{x',x''}(m) \simeq g$. On the other hand, since $p(f) \simeq 0$, we get that the left vertical $f^*$ is nullhomotopic. Therefore, $g \circ f \simeq f^*(g) \simeq j_{x,x''}\circ f^*(m) \simeq 0$.
\end{proof}

\begin{lem} \label{lem:redund} Consider the subcategory $\sA \oplus \sM^0 \subset \sA \oplus \sM$ spanned by objects of the form $(c \in \sA, 0)$. Then the inclusion functor $(\sA \oplus \sM)^0 \hookrightarrow  \sA \oplus \sM$ is an equivalence of categories.
\end{lem}
\begin{proof}
The functor in question is fully faithful, so it suffices to show essential surjectivity, i.e. that any object of $\sA \oplus \sM$ is equivalent to an object of the form $(c,0)$.
By Proposition~\ref{prop:sqzero_is_nil} $\sA \oplus \sM \stackrel{p}\to \sA$ is a nilpotent extension, in particular, it is surjective on homotopy classes of morphisms. Hence we can find a map $(c,t) \stackrel{f}\to (c,0)$ such that $p(f) \simeq \id_c$ for any object $(c,t) \in \sA \oplus \sM$.
By Proposition~\ref{prop:cons} $p$ is conservative, so $f$ is an equivalence.
\end{proof}


%

The next proposition gives us a way to compute composition in $\sA \oplus \sM$. We note that via the section $i: \sA \rightarrow \sA \oplus \sM$ in Lemma~\ref{lem:redund}, we get an additive functor
\[
\mbf{Maps}_{\sA \oplus \sM}(i(-),i(-)): \sA^{\op} \times \sA \rightarrow \Spt_{\geq 0}.
\]
Furthermore, by Lemma~\ref{lem:redund} again, any object in $\sA \oplus \sM$ is equivalent to one which is of the form $i(c)$.

\begin{prop}\label{prop:computing-sqzero-composite}
There is a natural equivalence of connective spectra, functorial in both variables:
$$\mbf{Maps}_{\sA \oplus \sM}(i(c),i(c')) \cong \mbf{Maps}_{\sA}(c,c') \oplus \sM(c,c').$$
Furthermore, the composition
$$i(c) \stackrel{(f_1, m_1)}\to i(c') \stackrel{(f_2, m_2)}\to i(c'')$$
in $\sA\oplus \sM$
can be computed as $(f_2f_1, f_{2*}m_1 + f^*_1m_2)$.
\end{prop}
\begin{proof}
We claim that the fiber sequence $i_*$\eqref{extension_sequence} of functors on $\sA^{\op} \times \sA$ splits. Indeed, restricting along $i$ amounts to setting $t$ and $t'$ in~\eqref{defining_maps} to be the maps classifiying the zero elements and thus the desired sequence splits from by its construction. This proves the first part of the statement.

To compute the composition, we note that since the composition operation is linear with respect to the additive $\mathbb{E}_{\infty}$-structure, we have that
\begin{eqnarray*}
(f_2,m_2) \circ (f_1,m_1) & \simeq & ((f_2, 0) + (0,m_2)) \circ (f_1, m_1)\\
& \simeq & (f_2,0) \circ (f_1, m_1) + (0, m_2)\circ(f_1, m_1)\\
& \simeq & (f_2,0) \circ (f_1, 0) + (f_2,0) \circ (0, m_1) + (0, m_2)\circ(f_1, 0) + (0,m_2)\circ(0, m_1).
\end{eqnarray*}
Hence the claim follows from the following sequence of equivalences:

\begin{enumerate}
\item $(f_2, 0) \circ (f_1, m) \simeq (f_2 \circ f_1, f_{2*}m)$,
\item $(f_2, m) \circ (f_1, 0) \simeq (f_2 \circ f_1, f^*_1m)$, and
\item $(0, m) \circ (0, m') \simeq 0$.
\end{enumerate}

We note that (3) was shown in the course of proving Proposition~\ref{prop:sqzero_is_nil}. To prove the first claim, because of the $\sA^{\op} \times \sA$-naturality of the decomposition, we get the commutative diagram of connective spectra:
\[
\begin{tikzcd}
\mbf{Maps}_{\sA \oplus \sM}(i(c'),i(c'')) \ar[swap]{d}{(f_1,0)^*} \ar{r}{\simeq} &  \mbf{Maps}_{\sA}(c',c'') \oplus \sM(c',c'')\ar{d}{f_1^* \oplus f_1^*}\\
\mbf{Maps}_{\sA \oplus \sM}(i(c),i(c'')) \ar{r}{\simeq} & \mbf{Maps}_{\sA}(c,c'') \oplus \sM(c,c'').
\end{tikzcd}
\]
This proves (1), while (2) follows from the commutativity of
\[
\begin{tikzcd}
\mbf{Maps}_{\sA \oplus \sM}(i(c),i(c')) \ar[swap]{d}{(f_2,0)_*} \ar{r}{\simeq} &  \mbf{Maps}_{\sA}(c,c') \oplus \sM(c,c')\ar{d}{f_{2*} \oplus f_{2*}}\\
\mbf{Maps}_{\sA \oplus \sM}(i(c),i(c'')) \ar{r}{\simeq} & \mbf{Maps}_{\sA}(c,c'') \oplus \sM(c,c'').
\end{tikzcd}
\]

\end{proof}

\begin{rem}\label{rem:comparison-with-dotto}
In \cite[Definition 1.3]{dotto}, Dotto defined the notion of square zero extension of $\mrm{Ab}$-enriched categories. We view our construction as an $\infty$-categorical analog of his construction. In contrast to his approach, our formulation does not begin by prescribing a composition law (which would be difficult to do in our setting). Rather, the composition law is a computation stemming from the presentation as a lax pullback. We can also view our definition as an $\infty$-categorical/spectral version of Tabuada's definition for dg categories \cite[Section 4]{tabuada-aq}.

Note that in the setting of additive categories the construction of Dotto is equivalent to
ours (by Lemma~\ref{lem:redund} and Proposition~\ref{prop:computing-sqzero-composite}).
\end{rem}

%

\begin{exam} Let $R$ be a connective $\bbE_1$-ring spectrum and $M$ a connective $R$-bimodule. Out of this, we can form a connective $\bbE_1$-$R$-algebra spectrum $R \oplus M$, the square-zero extension in the sense of \cite[Section 7.4.1]{HA}. The underlying $R$-module is given by $R \oplus M$. This generalizes the usual construction of square-zero extension when $R$ and $M$ are discrete; in this case $R \oplus M$ is endowed with multiplication given by:
$$(r_1,m_1), (r_2,m_2) \mapsto (r_1r_2, r_1m_2 + m_1r_2).$$

Now, $M$ determines a $\mbf{Proj}^\mrm{fg}_R$-bimodule $\sM$ 
$$\mbf{Proj}^{\mrm{fg},\op}_R \times \mbf{Proj}^{\mrm{fg}}_R \longrightarrow \Spt$$
$$(P_1, P_2) \mapsto \Maps(P_1,R) \otimes_R M \otimes_R P_2.$$
We claim that there is a canonical equivalence of additive $\infty$-categories
\[
\mbf{Proj}^\mrm{fg}_{R \oplus M} \simeq \mbf{Proj}^\mrm{fg}_R \oplus \sM.
\]
Indeed, by Lemma~\ref{lem:redund} $\Perf_R \oplus \sM$ is generated under finite sums and retracts by the object $(R,0)$.
Now, by Theorem~\ref{thm:Morita-theory} $\Perf_R \oplus \sM$ is equivalent to the $\infty$-category of projective modules over $\mbf{End}((R,0))$.
Moreover, the description of the endomorphism ring is given in Proposition~\ref{prop:computing-sqzero-composite}, whence we get that $\mbf{End}((R,0)) \simeq R \oplus M$ as connective $\bbE_1$-rings.
\end{exam}

\section{The DGM theorem for nilpotent extensions of additive $\infty$-categories} \label{sec:dgm-additive}

In this section, we prove a version of the DGM theorem for nilpotent extensions of additive $\infty$-categories.

\ssec{Localizing invariants of additive $\infty$-categories}
Following the convention of \cite{Land_2019} we say that a {\bf localizing invariant} is a functor
\[
E: \mbf{Cat}_{\infty}^{\perf} \rightarrow \Spt,
\]
which converts any exact sequence in $\mbf{Cat}_{\infty}^{\perf}$ to a cofiber sequence of spectra (exact sequences in the context of stable $\infty$-categories are recalled in Section~\ref{sect:comparestab}). This differs from the convention in \cite{blumberg2013universal} where a localizing invariant was also required to commute with filtered colimits. We say that $E$ is {\bf truncating} if for any connective $\bbE_1$-ring spectrum $R$, the canonical map
\[
R \rightarrow \pi_0(R)
\]
induces an equivalence
\[
E(R) \xrightarrow{\simeq} E(\pi_0(R)).
\]

Localizing invariants for additive $\infty$-categories are defined via the functor $(-)^{\mrm{fin}}$ and idempotent completion.

\begin{defn}\label{def:localizing-additive} Let $E$ be a localizing invariant and $\sA$ a small additive $\infty$-category. Then we set
\[
E^{\Sigma}(\sA) := E(\mrm{Kar}\left(\sA^{\fin}\right)).
\]

\end{defn}

\begin{rem}\label{rem:warning} Since a stable $\infty$-category $\sC$ is also an additive $\infty$-category, we can either take $E^{\Sigma}(\sC)$ or take $E(\sC)$. These spectra are, in general, \emph{not} equivalent. Furthermore, when $E = K$, the spectrum $K^{\Sigma}(\sA)$ is also not necessarily equivalent to the direct sum $K$-theory $K^{\oplus}(\sA)$; for example the latter is \emph{always} a connective spectrum while $K^{\Sigma}(\sA)$ need not be. This issue is further discussed in Appendix~\ref{app:kadd}. Using weight structures, we will be able to identify when $E(\sC) \simeq E^{\Sigma}(\sC^{\heartsuit_w})$ as we will explain in the next section. This is the relevance of the functor $E^{\Sigma}$.
\end{rem}

\begin{lem}\label{lem:cofibers} Suppose that
\[
\sB \rightarrow \sA \rightarrow \sC
\]
is an exact sequence of additive $\infty$-categories. Then for any localizing invariant $E$,
\[
E^{\Sigma}(\sB) \rightarrow E^{\Sigma}(\sA) \rightarrow E^{\Sigma}(\sC)
\]
is a cofiber sequence of spectra.
\end{lem}

\begin{proof} This follows from Lemma~\ref{lem:comparing_cofibers} and the definition of $E^{\Sigma}$.

\end{proof}

\ssec{Proof of DGM theorem for additive $\infty$-categories}
We now come to our main result in the setting of additive $\infty$-categories.

\begin{thm} \label{thm:main-add}  Let $f:\sA \rightarrow \sB$ be a nilpotent extension of small additive $\infty$-categories and let $E$ be a truncating invariant. Then, we have an induced equivalence of spectra
\[
E^{\Sigma}(\sA) \rightarrow E^{\Sigma}(\sB).
\]

%
\end{thm}


\begin{proof}
We will explain the following commutative diagram of \emph{small} additive $\infty$-categories

%
%
\begin{equation}\label{eq:main-diagram}
\begin{tikzcd}
\sA \ar{r} \ar{d} & \sA^{\mrm{big}} \ar{d} \ar{r} &  (\sA^{\mrm{big}}/\sA)^{\mrm{Kar}} \ar{d}\\
\sB \ar{r} & \sB^{\mrm{big}} \ar{r} & (\sB^{\mrm{big}}/\sB)^{\mrm{Kar}}.
\end{tikzcd}
\end{equation}
For a small additive $\infty$-category $\sE$, we consider
\[
\sE^{\mrm{big}} \subset \mrm{Ind}(\sE),
\]
which is the additive subcategory closed under retracts generated by
$$G_{\sE}=\bigoplus\limits_{x \in \mrm{Obj}(\sE)} x.$$ This is evidently a small additive $\infty$-category.

The canonical functor $\sE \rightarrow \mrm{Ind}(\sE)$ factors through $\sE^{\mrm{big}}$ since the latter category is closed under retracts, whence contains the Yoneda image. Note that this construction is functorial in small additive $\infty$-categories and additive functors. This explains the left square. The right square is just the induced map on cofibers, i.e., on Verdier quotients in the sense explained in Section~\ref{sec:add-verd}.

We note the following facts:
\begin{enumerate}
\item By Theorem~\ref{thm:Morita-theory}(2) we have an equivalence of additive $\infty$-categories
\[
\sA^{\mrm{big}} \simeq \mbf{Proj}^{\mrm{fg}}_{\mbf{End}_{\sA^{\mrm{big}}}(G_{\sA})},
\]
\item the additive $\infty$-category $\sA^{\mrm{big}}/\sA$ is generated by the image of $G_{\sA}$ under
finite sums and retracts. 
\end{enumerate}

Therefore applying Theorem~\ref{thm:Morita-theory}(2) again:
\[
(\sA^{\mrm{big}}/{\sA})^{\mrm{Kar}} \simeq \mbf{Proj}^{\mrm{fg}}_{\mbf{End}_{\sA^{\mrm{big}}/\sA}(G_{\sA})},
\]
Since $f$ is a nilpotent extension of additive $\infty$-categories (and is thus essentially surjective), we have that $G_{\sB}$ is a retract of $f(G_{\sA})$ and in particular
$f(G_{\sA})$ generates $\sB^{\mrm{big}}$ under finite sums and retracts.
Therefore, the analogous equivalences also hold for $f(G_{\sA})$ in $\sB^{\mrm{big}}$ and in $(\sB^{\mrm{big}}/\sB)^{\mrm{Kar}}$. Under these equivalences the middle and the right map of (\ref{eq:main-diagram}) correspond to base change functors along maps of $\mathbb{E}_1$-rings.

Now, the map
$$\pi_0\mbf{End}_{\sA^{\mrm{big}}}(G_{\sA}) \to \pi_0\mbf{End}_{\sB^{\mrm{big}}}(f(G_{\sA}))$$
is a nilpotent extension of rings. Indeed, $\pi_0\mbf{End}_{\sA^{\mrm{big}}}(G_{\sA})$ can be expressed as an infinite ring of matrices which are \emph{column finite} with entries in $\pi_0\Maps_\sA(x_i,x_j), x_i, x_j \in \sA$, and similarly for  $\pi_0\mbf{End}_{\sB^{\mrm{big}}}(f(G_{\sA}))$.
The kernel of this ring map consists of (infinite) matrices which are column finite and whose entries are elements $s \in \pi_0\Maps_\sA(x_i,x_j)$ such that $f(s) = 0$.
From the definition of a nilpotent extension of additive $\infty$-categories, this a nilpotent ideal. Moreover, the same is true for the kernel of the map
$$\pi_0\mbf{End}_{\sA^{\mrm{big}}/\sA}(G_{\sA}) \to \pi_0\mbf{End}_{\sB^{\mrm{big}}/\sB}(f(G_{\sA}))$$
by Corollary~\ref{cor:explicitquotcat}(2) which expresses each of the rings as a quotient of column-finite matrices modulo the ideal of matrices with only finitely many nonzero entries.

Now $E^{\Sigma}$ applied to the middle or the right vertical functor of~\eqref{eq:main-diagram} is an equivalence by the assumption that $E$ is truncating.
Since it is a localizing invariant it also induces an equivalence after applying it to $\sA \to \sB$ which
yields the claim.
\end{proof}

Here is a couple of corollaries. The next one is an $\infty$-categorical version of Dotto's theorem \cite{dotto}. This generalizes the split-exact case of his theorem by Remark~\ref{rem:comparison-with-dotto}.

\begin{cor}\label{cor:dotto} Suppose that $\sA$ is an additive $\infty$-category and $\sM$ is an $\sA$-bimodule. Then for any truncating invariant $E$ we have that
\[
E^{\Sigma}(\sA) \simeq E^{\Sigma}(\sA \oplus \sM).
\]
In particular if we set $E = K^{\mrm{inv},\Sigma}$, then we get a cartesian square:
\[
\begin{tikzcd}
K^{\Sigma}(\sA \oplus \sM) \ar{d} \ar{r} & TC^{\Sigma}(\sA \oplus \sM) \ar{d}\\
K^{\Sigma}(\sA) \ar{r} & TC^{\Sigma}(\sA).
\end{tikzcd}
\]
\end{cor}
The next corollary will be important for the sequel.

\begin{cor}\label{cor:add} Let $\sA$ be an addititive $\infty$-category. Then the functor $\sA \rightarrow h\sA$ induces an equivalence, for any truncating invariant $E$,
\[
E^{\Sigma}(\sA) \simeq E^{\Sigma}(h\sA).
\]
In particular if we set $E = K^{\mrm{inv},\Sigma}$, then we get a cartesian square:
\[
\begin{tikzcd}
K^{\Sigma}(\sA)\ar{d} \ar{r} & TC^{\Sigma}(\sA) \ar{d}\\
K^{\Sigma}(h(\sA)) \ar{r} & TC^{\Sigma}(h(\sA)).
\end{tikzcd}
\]
\end{cor}

%
%
%
%
\begin{rem} \label{rem:tc} $TC$ is not a finitary invariant. Indeed, as in \cite[Section 3.1]{lmt}, \cite[Section 2.4]{bcm}, we can write the subcategory of $p^{\infty}$-torsion objects in $\Perf_{\bbZ}$ as a colimit of categories of modules:
\[
\colim \Mod_{\bbZ/p^n\bbZ}(\Perf_{\bbZ}).
\]
However, $TC$ does not preserve this colimit as explained in \cite[Remark 3.27]{lmt}. Thus we cannot
prove DGM for a nilpotent extension of additive $\infty$-categories $\sA \rightarrow \sB$
simply by applying the original DGM theorem to the maps
$$\mbf{End}_{\sA}(x) \to \mbf{End}_{\sB}(x),$$
as $x$ varies through the objects of $\sA$ (and thus $\sB$) and then passing to the colimit. We note, however, the following amusing corollary of Theorem~\ref{thm:main-add}.
\end{rem}

\begin{cor}\label{cor:colim} The functor
\[
\mbf{Cat}^{\mrm{add}}_{\infty} \rightarrow \Spt \qquad \sA \mapsto \Fib(TC^{\Sigma}(\sA) \to TC^{\Sigma}(h\sA))
\] commutes
with filtered colimits.

\end{cor}

\begin{proof} After Theorem~\ref{thm:main-add}, this follows from the fact that the $K$-theory functor,
  the functor $\sA \mapsto h(\sA)$ and the functor $\sA \mapsto \mrm{Kar}(\sA^{\mrm{fin}})$ preserve filtered colimits.

\end{proof}

%
%
%
%

\section{The DGM theorem for weighted stable $\infty$-categories and applications} \label{sec:dgm-weighted}

So far we have only discussed additive $\infty$-categories and proved a DGM theorem in that context.
However, the examples we are mostly interested in come from the context of stable $\infty$-categories,
and their K-theory. Note that every stable $\infty$-category is also additive. The K-theory of a stable $\infty$-category is fundamentally different from its $K$-theory taken as an additive $\infty$-category in the sense of our previous section, i.e. we should not expect the two spectra $K^{\Sigma}(\sC)$ and $K(\sC)$ to be equivalent. Hence, our results do not directly apply.

Moreover, we do not quite know how to define nilpotent extensions of general stable $\infty$-categories. Note that if we regard an exact functor of stable $\infty$-categories as an additive functor of additive $\infty$-categories, then the following is a non-example of nilpotent extensions (in the sense of Definition~\ref{def:nilp}).
\begin{exam}The morphism of connective $\bbE_{\infty}$-rings $\mathbb{S} \rightarrow \bbZ$ defines, via base change, a functor $\Perf_{\mathbb{S}} \to \Perf_{\bbZ}$. This is not a nilpotent extension of
additive $\infty$-categories: the periodicity theorem in chromatic homotopy theory tells us that for any
non-trivial finite spectrum $X$ with torsion homology there are positive
degree maps $X \stackrel{f}\to \Sigma^d X$ that are trivial in homology such that $f^n$ is non-trivial for any $n>0$.
\end{exam}

On the other hand, we could try to relax the nilpotence condition by focusing on generators. This seems to lead to a wrong notion in the sense that one should not expect a DGM theorem for it, as explained in the next example.
\begin{exam} Let $k$ be any ring and consider the stable $\infty$-category $\Perf_{k^{\times 2}}$, which is generated by the two projective modules
$P_1, P_2$ given by the images of the two non-trivial idempotents in the ring $k^{\times 2}$.
The endomorphism ring spectrum of either of them is $k$.
Identifying $k^{\times 2}$ with diagonal matrices we get a ring homomorphism $k^{\times 2} \to M_2(k)$ which induces a
functor $\Perf_{k^{\times 2}} \to \Perf_{M_2(k)}$.
This functor is essentially surjective and induces an equivalence on endomorphism rings of each of
the generators.
However, the DGM theorem does not hold for the map $k^{\times 2} \to M_2(k)$.
\end{exam}

What we do instead is go through weight structures to define nilpotent extensions and show that the results of the previous section prove new cases of the DGM theorem in the context of stable $\infty$-categories.

\ssec{The DGM theorem for nilpotent extensions of weighted $\infty$-categories}\label{ssec:nilexamples}
\begin{defn}\label{def:nilp-stab} Let $(\sA, w), (\sB, w')$ be weighted $\infty$-categories. A weight exact functor $f: (\sA,w) \rightarrow (\sB,w')$ is said to be a {\bf nilpotent extension} if the functor of additive $\infty$-categories
\[
f:\sA^{\heartsuit_w} \rightarrow \sB^{\heartsuit_{w'}}
\]
is a nilpotent extension in the sense of Definition~\ref{def:nilp}.
\end{defn}

\begin{rem} \label{rem:idem} Theorem~\ref{thm:vova} in particular states that if $(\sA,w)$ is an idempotent complete stable $\infty$-category with a bounded weight structure, then the functor
\[
(\sA^{\heartsuit_w})^{\fin} \to \sA
\]
is an equivalence.
\end{rem}


\begin{thm}\label{thm:dgm-weights}
Let $f:(\sA,w) \rightarrow (\sB,w')$ be a nilpotent extension of boundedly weighted stable $\infty$-categories and let $E$ be a truncating invariant. Then we have an induced equivalence of spectra
\[
E(\sA) \rightarrow E(\sB).
\]

\end{thm}

\begin{proof} This follows immediately from Theorem~\ref{thm:main-add}, and Theorem~\ref{thm:vova} which gives identifications
\[
E(\sA) \simeq E^{\Sigma}(\sA^{\heartsuit_w}) \qquad E(\sB) \simeq E^{\Sigma}(\sB^{\heartsuit_{w'}}).
\]

\end{proof}

\begin{rem}\label{rem:k-linear-invariants}
We note that although Theorem~\ref{thm:dgm-weights} 
is proved for ``absolute" localizing invariants, its proof carries over to truncating invariants of $k$-linear boundedly weighted stable $\infty$-categories where $k$ is a connective $\bbE_{\infty}$-ring spectrum;
we refer to \cite[Remarks~1.18, 3.4]{Land_2019} for more discussions on this point.
\end{rem}

\begin{cor}\label{cor:dgm-weights} Let $f:(\sA,w) \rightarrow (\sB,w')$ be a nilpotent extension of boundedly weighted stable $\infty$-categories. Then we have an induced cartesian square of spectra:

\[
\begin{tikzcd}
K(\sA) \ar{d} \ar{r} & TC(\sA) \ar{d}\\
K(\sB) \ar{r} & TC(\sB).
\end{tikzcd}
\]
%
\end{cor}

\sssec{Specializations of results} We now discuss the examples that appear in the introduction.
\begin{exam}
For any boundedly weighted stable $\infty$-category $\sC$ one has a functor
$$\sC \simeq (\sC^{\heartsuit_w})^{\fin} \to (h\sC^{\heartsuit_w})^{\fin}$$
which we call the {\bf weight complex functor} (see \cite[Corollary~3.5]{vova-negative}).
This induces an equivalence on the homotopy categories of the heart, so it is a nilpotent extension of weighted categories.
The $\infty$-category $(h\sC^{\heartsuit_w})^{\fin}$ can also be described as the {\it homotopy category
of bounded complexes} in $h\sC^{\heartsuit_w}$, i.e. the localization of the 1-category of complexes by
the set of homotopy equivalences. Therefore, Theorem~\ref{thm:main-add} tells us that for any truncating invariant $E$, we have an induced equivalence
\[
E(\sC) \simeq E((h\sC^{\heartsuit_w})^{\fin}) = E^{\Sigma}(h\sC^{\heartsuit_w}).
\]
\end{exam}

\begin{exam}\label{exam:chow-weight-complex}
Since $h\mbf{Chow}_{\infty}$ is the classical additive category of Chow motives,
the previous example specializes to a functor
\[
\DM_{\mrm{gm}}(k;R) \rightarrow (\mbf{Chow}(k;R))^{\fin}.
\]
Applying Corollary~\ref{cor:dgm-weights} to this functor we obtain Corollary~\ref{cor:mot} from the introduction.
\end{exam}

\begin{exam}\label{exam:nilextension-of-stacks}
Now, let $k$ be a discrete ring and $G$ be an embeddable linearly reductive group scheme over $k$. Suppose that $R, S$ are connective $\bbE_{\infty}$-$k$-algebras endowed with $G$-actions and $f: R \rightarrow S$ is a $G$-equivariant map over $k$. Assume further that $\pi_0(R) \stackrel{\pi_0(f)}\to \pi_0(S)$ is a surjection with nilpotent kernel. Now Theorem~\ref{thm:stacks-weights}(1) endows both
$\Perf_{[\Spec R/G]}$ and $\Perf_{[\Spec S/G]}$ with weight structures and the induced functor $f^*: \Perf_{[\Spec R/G]} \rightarrow \Perf_{[\Spec S/G]}$ is weight exact by Theorem~\ref{thm:stacks-weights}(2). Below we prove that $f^*$ is a nilpotent extension of boundedly weighted stable $\infty$-categories. Applying Theorem~\ref{thm:dgm-weights} to this functor we obtain Corollary~\ref{cor:stacks} from the introduction.

\end{exam}
\begin{prop}\label{prop:nilp} With the notation of Example~\ref{exam:nilextension-of-stacks}, the functor
\[
f^*: \Perf_{[\Spec R/G]} \rightarrow \Perf_{[\Spec S/G]}
\]
is a nilpotent extension of boundedly weighted stable $\infty$-categories.
\end{prop}

\begin{proof}
Denote by
$$p_R \colon [\Spec R/G] \to BG,$$
$$p_S \colon [\Spec S/G] \to BG$$
the canonical projections and by $\bar{f}$ the map $[\Spec S/G] \to [\Spec R/G]$ induced by $f$.
By construction, the hearts of the weight structures on $\Perf_{[\Spec R/G]}$ and $\Perf_{[\Spec S/G]}$
are generated under finite
sums and retracts by pullbacks of finite $G$-equivariant $k$-modules, along $p_R$ and $p_S$, respectively.

By descent we may identify
$\Maps_{[\Spec R/G]}(p_R^*V,p_R^*W)$ with $\Maps_{R}(V,W)^G$ (and similarly for $S$).
Note that the map
$$\pi_0\Maps_R(V,W)^G \to \pi_0\Maps_S(f^*V,f^*W)^G$$
is surjective since $G$ is linearly reductive.

Let $n$ be an integer such that $I^n=0$ where $I=\Ker(\pi_0(R) \to \pi_0(S))$.
Up to adding direct summands the elements in the kernel of
$$\pi_0\Maps_R(V,W) \to \pi_0\Maps_S(f^*V,f^*W)$$
are defined by matrices with coefficients in $I$. Therefore, for any sequence of composable
morphisms  $f_1,\cdots,f_n$
in $\mbf{Proj}_R^{\mrm{fg}}$ such that $f^*(f_i)$ is trivial in $\mbf{Proj}_S^{\mrm{fg}}$, we get that $f_1 \circ \cdots \circ f_n = 0$.
Now since $\pi_0\Maps_R(V,W)^G$ is a subgroup in $\pi_0\Maps_R(V,W)$, the same is true for composable
sequences of morphisms in $\Perf_{[\Spec R/G]}^{\heartsuit_w}$ whose base change to $S$ is trivial.
So, the second and the third parts of Definition~\ref{def:nilp} are
satisfied.

Since $p_R \circ \bar{f} = p_S$, the restriction of $\bar{f}^*$ to the hearts
is essentially surjective up to retracts.
Any idempotent $p\in \pi_0\End_{[\Spec S/G]}(f^*V)$ can be lifted
along a nilpotent extension of rings
$$\pi_0\End_{[\Spec R/G]}(V) \to \pi_0\End_{[\Spec S/G]}(f^*V),$$
so $\bar{f}^*$ is essentially surjective and the first part of Definition~\ref{def:nilp} is also
satisfied.
\end{proof}

\begin{exam}\label{exam:derived} In the notation of Example~\ref{exam:nilextension-of-stacks}, let $R$ be a connective $\bbE_{\infty}$-$k$-algebra with a $G$-action. Then the map $[\Spec \pi_0(R)/G] \rightarrow [\Spec R/G]$ satisfies the hypotheses of Corollary~\ref{cor:stacks} and so our theorem applies to this situation. We will use this observation in Section~\ref{app:latt}.
\end{exam}

\ssec{Applications to truncating invariants of stacks} \label{sec:apps}
We now discuss some applications of Corollary~\ref{cor:stacks} to derived quasi-compact and quasi-separated algebraic stacks \footnote{Strictly following the convention of \cite{BKRS}, these are derived stacks built from animated rings. The reason for this choice was the usage of the theory of quasi-smooth morphisms in the proof of \mrm{cdh} descent results which is significantly simpler for animated rings. However, the relevant results from \cite{BKRS} which we need in this section work for spectral stacks as well, i.e., those built from connective  $\bbE_{\infty}$-rings as in \cite{SAG} and reader should feel free to work in that generality. Note that in Section~\ref{sec:classical}, all the stacks in sight are classical.}.
For this we use the Nisnevich topology on the
category of derived algebraic stacks, following the treatment in \cite{BKRS} where we refer the reader for details. At present, we give an outline of the relevant facts for the reader's convenience. By a {\bf Nisnevich square} of derived algebraic stacks \cite[Definition 2.1.1]{BKRS} we mean a cartesian diagram of derived algebraic stacks
\[
\begin{tikzcd}
U' \ar{d} \ar{r} & X' \ar{d}{f}\\
U \ar{r}{j} & X.
\end{tikzcd}
\]
where $j$ is a quasi-compact, quasi-separated open immersion and $f$ is a representable \'etale morphism of finite presentation inducing an equivalence $f^{-1}(Z) \rightarrow Z$ for some complementary (to $U$) closed substack $Z \hookrightarrow X$. The derived algebraic stacks that we care about are {\bf ANS}\footnote{This is an abbreviation of ``affine diagonal and nice stabilizers."}  in the sense of \cite[Appendix A.1]{BKRS}: these are derived algebraic stacks with an affine diagonal and nice (in the sense of Definition~\ref{def:groups}) stabilizers. Examples include global quotient stacks of separated derived schemes by a nice group scheme and separated Deligne-Mumford stacks; see \cite[Appendix A.1]{BKRS} for more examples and discussion. If $E$ is a localizing invariant we set
\[
E(X) = E(\Perf_X).
\]
Here are two key facts:
\begin{enumerate}
\item If $E$ is a localizing invariant, then it converts a Nisnevich square of ANS derived algebraic stacks to a cartesian square of spectra \cite[Corollary 4.1.2, Theorem A.3.2]{BKRS}; this is an extension of the fact that localizing invariants satisfy descent for derived algebraic spaces \cite[Appendix A]{cmmn}.
\item If $X$ is an ANS derived algebraic stack then there exists a \emph{finite} sequence of open immersions
\begin{equation}\label{eq:scallop}
\varnothing = U_0 \hookrightarrow U_1 \hookrightarrow \cdots  \hookrightarrow U_{n-1} \hookrightarrow U_n = X,
\end{equation}
an embeddable nice group scheme $G$ over an affine scheme $S$ and Nisnevich squares for $1 \leq i \leq n$:
\[
\begin{tikzcd}
W_i \ar{d} \ar{r} & V_i \ar{d}\\
U_{i-1} \ar{r} & U_i,
\end{tikzcd}
\]
where $V_i$ is affine over $BG$ and the map $V_i \rightarrow U_i$ is \'etale and affine. In particular $V_i$ is of the form $[\Spec R_i/G]$. This is precisely \cite[Theorem A.1.8]{BKRS} and is a derived version of the results of \cite{ahhr}.
\end{enumerate}
We will say that a closed immersion of derived algebraic stacks $X \to Y$ is a {\bf nilpotent extension} if the ideal sheaf is nilpotent. The next result is a global version of Corollary~\ref{cor:stacks}.


\begin{thm}\label{thm:global-nilexcision}
Let $Y$ be an ANS derived algebraic stack and let $X \to Y$ be a nilpotent extension of derived algebraic stacks.
Then the induced map $E(Y) \to E(X)$ is an equivalence for any truncating invariant $E$.
\end{thm}
\begin{proof} Let us first prove the result assuming that $Y$ is a global quotient stack $[\Spec R/G]$ where $G$ is a an embeddable, nice group scheme over an affine scheme $B$ and $\Spec R$ is a derived $B$-scheme. In this case, we can form the cartesian square
\[
\begin{tikzcd}
X' \ar{r} \ar{d} & \Spec R\ar{d}{p}\\
X \ar{r} & Y,
\end{tikzcd}
\]
where $\Spec R \rightarrow Y$ is the quotient map. Now, a closed immersion is an affine morphism and thus $X' \rightarrow \Spec R$ is an affine morphism. Therefore, we deduce that $X' \cong \Spec S$. On the other hand, $\Spec R \rightarrow Y$ is $G$-torsor and thus $X' \rightarrow X$ is a $G$-torsor as well and thus we conclude that $X = [\Spec S/G]$. The result, in this case, then follows from Corollary~\ref{cor:stacks}.

To prove the result in general, we consider a decomposition of $Y$ as in~\eqref{eq:scallop} and induct on $n$ which exists by point (2) above. More precisely, we say that an ANS derived algebraic stack $X$ is of \emph{length at most $n$} if it has a decomposition of the form~\eqref{eq:scallop} of length $n$. With this terminology, a global quotient stack is length at most zero and we have proved the result in this case in the previous paragraph (see Proposition~\ref{prop:nilp}). Let us assume that the result has been proved for all ANS derived algebraic stacks of length at most $n-1$ and let $Y$ be an ANS derived algebraic stack of length at most $n$. Choose a decomposition of the form~\eqref{eq:scallop} as above for $Y$. We have a morphism of cartesian squares induced by pullback along $X \rightarrow Y$.
\[
\begin{tikzcd}
W_{n} \ar{r} \ar{d} & V_n\ar{d}\\
U_{n-1} \ar{r} & Y
\end{tikzcd}
\Leftarrow
\begin{tikzcd}
W'_{n} \ar{r} \ar{d} & V'_n\ar{d}\\
U'_{n-1} \ar{r} & X
\end{tikzcd}
\]
Since all the morphisms above are representable and have affine diagonal,  \cite[Lemma A.1.7]{BKRS} tells us that all the derived stacks are ANS.
By construction, 
$U_{n-1}$ 
is length at most $n-1$. Therefore, by induction hypothesis, $E$ converts the map $U'_{n-1} \rightarrow U_{n-1}$ to an equivalence. Now $E$ also converts the map $V'_n \rightarrow V_n$ to an equivalence since $V_n$ is a global quotient stack. Lastly, $W_n$ is again an ANS derived algebraic stack of length at most $n-1$ (we can pullback a decomposition of $U_{n-1}$ to one on $W_{n}$) and thus the result also applies by induction hypothesis.  Since $E$ converts Nisnevich squares of ANS derived stacks to bicartesian square of spectra, the result is proved.

%
%
%
%
\end{proof}

Let $X$ be a derived algebraic stack, then we have its classical locus $X^{\mrm{cl}} \hookrightarrow X$; this morphism is a closed immersion and $X^{\mrm{cl}}$ is a classical algebraic stack. A global version of Example~\eqref{exam:derived} is as follows

\begin{cor}\label{cor:pizero} Let $X$ be an ANS derived algebraic stack. Then for any truncating invariant $E$, we have an induced equivalence
\[
E(X^{\mrm{cl}}) \simeq E(X).
\]
\end{cor}

\sssec{Cdh excision for truncating invariants}\label{sec:classical} In the next situation, we content ourselves with \emph{classical} algebraic stacks. Recall that an {\bf abstract blow-up square} of \emph{Noetherian} algebraic stacks
$$
\begin{tikzcd}\label{cdhsquarestacks}
Z \arrow[r]\arrow[d] & \tilde{X}\arrow[d, "p"] \\
Y        \arrow[r,"i"] &         X,
\end{tikzcd}
$$
is a commutative square with $p$ proper representable and $i$ closed immersion such that $p$ induces an isomorphism
$\tilde{X}- Z \to X-Y.$
Define $Y_n$ to be the $n$-th infinitesimal thickening of $Y$ in $X$ and $Z_n$ to be the $n$-th
infinitesimal thickening of $Z$ in $\tilde{X}$. One of the main results of \cite{BKRS} is the following pro-cdh descent statement:
\begin{thm}\label{cdhdescent}\cite{BKRS}
If $X$ is an ANS algebraic stack, then for any localizing invariant $\on{E}$ the square
$$
\begin{tikzcd}
\on{E}(X) \arrow[r]\arrow[d] & ``\lim" \on{E}(Y_n) \arrow[d] \\
\on{E}(\tilde{X}) \arrow[r] &         ``\lim" \on{E}(Z_n)
\end{tikzcd}
$$
is a weak pullback of pro-spectra.
\end{thm}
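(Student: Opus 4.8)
The plan is to reduce, by Nisnevich descent, to the case of an affine quotient stack and then to run the pro-cdh-descent argument of Kerz--Strunk--Tamme in that setting. Recall that $K$-theory, $TC$ and indeed every localizing invariant satisfy Nisnevich descent on stacks with affine diagonal and nice stabilizers, and that such stacks admit affine Nisnevich covers by quotient stacks $[\Spec R/G]$ with $G$ embeddable and nice (this is where the hypotheses on $X$ are used; see \cite{BKRS,ahr,ahhr}). Since abstract blow-up squares and $n$-th infinitesimal thickenings are stable under flat base change, I would first pull the square~\eqref{cdhsquarestacks}, together with all of its thickenings, back along such a cover of $X$ and its \v{C}ech nerve; as $p$ is proper and representable, $\tilde X$ is again a stack of the type to which these results apply, so after one further Nisnevich refinement one may assume $X$, $\tilde X$, $Y$ and $E$ are all affine quotient stacks. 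Being a weak pullback of pro-spectra is closed under limits and is detected by the conservative family of restrictions to the cover, so it then suffices to treat $X=[\Spec R/G]$.

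The heart of the matter is a categorical pro-excision statement: the square of pro-objects in $\mbf{Cat}_{\infty}^{\mrm{st}}$
\[
\begin{tikzcd}
\Perf(X) \ar{r}\ar{d} & \{\Perf(Y_n)\}_n\ar{d}\\
\Perf(\tilde X)\ar{r} & \{\Perf(E_n)\}_n
\end{tikzcd}
\]
should be \emph{pro-cartesian}: the comparison functor from $\Perf(X)$ to the pullback has pro-zero fiber and cofiber, i.e.\ it is an equivalence of pro-stable $\infty$-categories. To establish this I would pass to the bounded coherent derived categories (using Noetherianity and the resolution property to realize $\Perf(X)$ as a thick subcategory, with a d\'evissage to see the relative terms are unchanged) and reduce to a formal-glueing statement for $G$-equivariant coherent sheaves: the pro-system $\{\mathrm{Coh}(Y_n)\}_n$ together with $\mathrm{Coh}(\tilde X)$, glued along $\{\mathrm{Coh}(E_n)\}_n$, recovers $\mathrm{Coh}(X)$ up to pro-isomorphism. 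This is a pro-version of Beauville--Laszlo and the theorem on formal functions; concretely, for a coherent sheaf one compares its restrictions to the thickenings $Y_n$ with the derived pushforwards along $\tilde X\to X$ of its restrictions to the $E_n$, the discrepancy being pro-zero by an Artin--Rees / Mittag--Leffler estimate. This is carried out $G$-equivariantly, which is permissible because $G$ is linearly reductive: the invariants functor is exact and commutes with the relevant filtered colimits, so equivariant and non-equivariant pro-vanishing coincide. (In the affine case one could alternatively combine the weight-structure resolution of Section~\ref{sec:dgm-weighted}, which presents each leg via connective $\mathbb{E}_1$-rings, with a pro-version of ring-theoretic pro-excision, but the blow-up leg $\tilde X$ still forces one through the same formal-glueing input.)

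Granting the categorical pro-excision, the theorem follows formally: a localizing invariant sends Verdier sequences of stable $\infty$-categories to fiber sequences of spectra, and this passes levelwise to pro-objects, so $\on{E}$ carries the pro-cartesian square to a square of pro-spectra with pro-zero total fiber, which is exactly what a weak pullback of pro-spectra means. I should stress that one genuinely needs the pro-systems $\{\on{E}(Y_n)\}_n$ and $\{\on{E}(E_n)\}_n$ rather than $\on{E}(Y)$ and $\on{E}(E)$: for a general (non-truncating) localizing invariant nil-invariance fails, so Theorems~\ref{thm:weight-nilinvariants} and~\ref{thm:global-nilexcision} do not apply, and it is precisely the passage to pro-objects that absorbs both the failure of nil-invariance and the failure of $TC$ to commute with filtered colimits.

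The hard part will be the geometric input of the second step --- the $G$-equivariant pro-formal-glueing for coherent sheaves, i.e.\ the Artin--Rees-type control of the pro-systems on the stack --- together with the descent from $\Perf$ to $\mathrm{Coh}$: since $X$ need not be regular, $K$-theory and $G$-theory differ, so one must verify that the pro-cdh statement really holds for $\Perf$ and is not merely a statement about $G$-theory. Handling these two points is the technical core (this is essentially the content of \cite{BKRS}, building on Kerz--Strunk--Tamme); the remaining steps are formal manipulations with localizing invariants and pro-spectra.
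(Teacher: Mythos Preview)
The paper does not prove this theorem: it is stated with the citation \cite{BKRS} attached and is used as a black box to obtain Corollary~\ref{cor:tr}. There is therefore no proof in the paper to compare your proposal against; what you have written is a sketch of (a version of) the argument in \cite{BKRS}, which extends the Kerz--Strunk--Tamme pro-cdh method to quotient stacks.

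Your outline is broadly correct in spirit, but the first reduction step contains a genuine gap. You claim that ``after one further Nisnevich refinement one may assume $X$, $\tilde X$, $Y$ and $E$ are all affine quotient stacks.'' This is not possible: the map $p\colon \tilde X\to X$ is proper and representable, so if both source and target were affine then $p$ would be finite, which essentially never happens for an interesting abstract blow-up. Nisnevich-localizing on $X$ pulls back the entire square and makes $X$ (and hence the closed substacks $Y$, $Y_n$) into affine quotient stacks, but $\tilde X$ remains a proper representable stack over $[\Spec R/G]$ and must be treated as such; you cannot independently Nisnevich-localize $\tilde X$ without destroying the square. You seem to realize this later when you remark that ``the blow-up leg $\tilde X$ still forces one through the same formal-glueing input,'' but the earlier reduction should simply be dropped.

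Once that is corrected, the structure you describe --- an equivariant formal-functions/Artin--Rees argument giving a pro-cartesian square of categories of perfect complexes, linear reductivity of $G$ making the estimates $G$-equivariant, and then the formal passage from pro-Verdier sequences to weak pullbacks of pro-spectra via the fiber-sequence property of a localizing invariant --- is the shape of the argument in \cite{BKRS}. The point you flag as hardest (pro-excision for $\Perf$ rather than merely for $\mrm{Coh}$, and the equivariant Artin--Rees control) is indeed where the work lies.
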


Now applying Theorem~\ref{thm:global-nilexcision} to the maps
$Z \to Z_n$ and $Y \to Y_n$ we see that the pro-systems in question trivialize.
In particular we obtain the following cdh descent result for ANS algebraic stacks:

\begin{cor} \label{cor:tr}
  In the notation of Theorem~\ref{cdhdescent}, the square
  $$
  \begin{tikzcd}
  \on{E}(X) \arrow[r]\arrow[d] & \on{E}(Y) \arrow[d] \\
  \on{E}(\tilde{X}) \arrow[r] &  \on{E}(Z)
  \end{tikzcd}
  $$
  is a pullback of spectra. Therefore truncating invariants of ANS algebraic stacks satisfy cdh descent.
\end{cor}

\begin{proof} The only point that needs explanation is the last part. We note that cdh descent on stacks is equivalent to a combination of Nisnevich excision and excision for abstract blowup squares; see \cite{BKRS} or \cite{Hoyois_2019} for details.

\end{proof}

\begin{exam} According to \cite[Proposition 3.14]{Land_2019}, the homotopy $K$-theory functor is a truncating invariant of $\mathbb{Z}$-linear stable $\infty$-categories. Setting $E = KH$ in Corollary~\ref{cor:tr} reproves the main result of Hoyois and Krishna \cite{Hoyois_2019}. Of course, Corollary~\ref{cor:tr} also proves cdh descent on stacks for other truncating invariants discussed in \cite{Land_2019} such as $K^{\mrm{inf}}_{\bbQ}$ \cite[Corollary 3.9]{Land_2019} and periodic cyclic homology $HP$ \cite[Corollary 3.11]{Land_2019} on stacks over characteristic zero rings. We note that we can apply our results in the linear setting
by Remark~\ref{rem:k-linear-invariants}.
\end{exam}

\ssec{Applications to the lattice conjecture}\label{app:latt} The next application is a contribution on the literature surrounding the lattice conjecture for topological $K$-theory in the sense of Blanc. We recall some terminology and refer the reader to \cite{blanc,konavalov} for details. We work in the context of $\mbf{Cat}^{\perf}_{\bbC} := \Mod_{\Perf_{\bbC}}(\mbf{Cat}^{\perf}_{\infty})$, the $\infty$-category of small stable idempotent complete $\bbC$-linear stable $\infty$-categories and an object of
$\mbf{Cat}^{\perf}_{\bbC}$ will hereon be referred to as a {\bf $\bbC$-linear category}. We refer to \cite[Remark 1.18]{Land_2019} for a discussion of localizing invariants in this context and \cite{HSS} for details. In any case, we use the following terminology from \cite[Remark 1.18]{Land_2019}: a functor $\on{E}: \mbf{Cat}^{\perf}_{\bbC} \rightarrow \Spt$ which takes exact sequences in $\mbf{Cat}^{\perf}_{\bbC}$ to cofiber sequences is a {\bf $\bbC$-linear localizing invariant}. It is furthermore {\bf truncating} if for all connective $\bbE_1$-$\bbC$-algebras (equivalently connective $\bbC$-dga's \cite[Proposition 25.1.2.2]{SAG}) $A$, the map $E(A) \rightarrow E(\pi_0(A))$ is an equivalence.

 The work of Blanc \cite{blanc} constructs the functor of {\bf topological $K$-theory}
\[
K^{\mrm{top}}: \mbf{Cat}^{\perf}_{\bbC} \rightarrow \Spt.
\]
This functor is a localizing invariant \cite[Theorem 1.1(c)]{blanc}. If $X$ is a separated $\bbC$-scheme of finite type then we have a canonical equivalence of spectra \cite[Theorem 1.1(b)]{blanc}
\[
K^{\mrm{top}}(\Perf_X) \simeq KU(X(\bbC)),
\]
where the right-hand-side is the complex topological $K$-theory spectrum of the analytic space associated to the $\bbC$-points of $X$. The lattice conjecture concerns a map of localizing invariants
from $K^{\mrm{top}}$ to periodic cyclic homology
called the {\bf Chern character} \cite[Theorem 1.1(d)]{blanc}
\[
\mrm{Ch}:K^{\mrm{top}} \rightarrow \mrm{HP}.
\]
The conjecture appears as \cite[Conjecture 1.7]{blanc}.

\begin{conj}\label{conj:lattice} Let $\sC$ be a smooth and proper $\bbC$-linear category. Then the map
\[
\mrm{Ch} \otimes H\bbC: K^{\mrm{top}}(\sC) \otimes H\bbC \rightarrow HP(\sC)
\]
is an equivalence.
\end{conj}

Conjecture~\ref{conj:lattice} belongs to the world of noncommutative Hodge theory and implies the existence of a suitable Hodge structure on the homotopy groups of $K^{\mrm{top}}$; see, for example, \cite[Proposition 5.4, Remark 5.5]{perry}. Recent work of Konovalov brought to bear trace methods into this problem \cite{konavalov}. His main insight is the following result.

\begin{thm}[Konovalov] \label{thm:andrei} Let $\sL$ be the fiber of the (complexified) Chern character
\[
\sL(-):= \mrm{Fib}(\mrm{Ch} \otimes H\bbC: K^{\mrm{top}}(-) \otimes H\bbC \rightarrow HP(-)).
\] Then $\sL$ is a $\bbC$-linear truncating invariant.
\end{thm}


Now combining our results with the results of Halpern-Leistner and Pomerleano \cite{dans-hodge}, which verify the lattice conjecture for some classical algebraic stacks, we get:

\begin{thm}\label{thm:lattice-for-stacks}
Let $X$ be a derived stack over $\bbC$ satisfying any of the following hypotheses:
\begin{enumerate}
\item
it is of the form $[Y/G]$ where $Y$ is a derived, affine $G$-scheme over $\bbC$, $G$ is a reductive $\bbC$-group scheme such that $Y^{\mrm{cl}}$ is a smooth $\bbC$-scheme; 
\item
it is ANS derived stack such that $X^{\mrm{cl}}$ is a smooth $\bbC$-stack.
\end{enumerate}
Then, the lattice conjecture holds for $\Perf_X$.

\end{thm}

\begin{proof} 
Applying Corollary~\ref{cor:stacks} and Example~\ref{k:zero} (for case (1)), and Theorem~\ref{thm:global-nilexcision} (for (2)), in conjunction with Theorem~\ref{thm:andrei},
we reduce to showing that $\sL(X) = 0$ for $X=X^{\mrm{cl}}$. Note that we can apply our results in the $\bbC$-linear setting by Remark~\ref{rem:k-linear-invariants}.

In case (1) is satisfied, the result follows from \cite[Theorem 3.20]{dans-hodge} whose hypotheses are verified under the hypotheses on $Y^{\mrm{cl}}$ by \cite[Theorem 2.3]{dans-hodge}.

In case (2) is satisfied, an inductive argument on length of the ANS stack $X$, as in the proof of Theorem~\ref{thm:global-nilexcision}, together with Nisnevich descent for $\sL$ reduces to the case (1).
\end{proof}

\appendix

\section{$K$-theory of additive $\infty$-categories} \label{app:kadd}

Let $\sA$ be an additive $\infty$-category. In this section, we clarify what it means to take $K(\sA)$ and prove that in the situations we are interested in, all notions of $K$-theory agree.  To $\sA$ we can attach the following spectra:

\begin{enumerate}
\item the {\bf direct sum} $K$-theory in the style of Segal \cite{segal1974categories} and revisited by Gepner-Groth-Nikolaus \cite[Section 8]{ggn}
\[
K^{\oplus}(\sA).
\]
It is obtained by first taking the core of $\sA$ to get an $\mathbb{E}_{\infty}$-monoid in spaces, $\sA^{\simeq}$, whose operation is induced by direct sum. Then $K^{\oplus}(\sA)$ is the \emph{connective} spectrum obtained by taking group completion and invoking the identification between group-complete $\mathbb{E}_{\infty}$-monoids in spaces with connective spectra.
\item As we have done for most of this paper we can apply the $K$-theory functor as in \cite{blumberg2013universal}, characterized as the universal localizing invariant, to $\sA^{\mrm{fin}}$; we denoted this by
\[
K^\Sigma(\sA).
\]
\item Suppose that $\sA \subset \sC$ is an additive subcategory of a stable $\infty$-category (for example, the weight-heart of a weight structure on $\sC$). Then we can equip $\sC$ with the structure of a {\bf Waldhausen $\infty$-category} in the sense of \cite{BarwickKtheory} by the {\bf maximal pair structure} of \cite[Example 2.11]{BarwickKtheory} insisting that all maps are ingressives. We can then induce the structure of a Waldhausen $\infty$-category on $\sA$ where the ingressives are those morphisms with cofibers in $\sA$. To this, we can attach the {\bf Waldhausen-Barwick $K$-theory}:
\[
K^{WB}(\sA),
\]
as constructed in \cite[Part 3]{BarwickKtheory}.
\end{enumerate}

\begin{thm} \label{thm:all-agree} Let $\sC$ be a boundedly weighted stable $\infty$-category and let $\sC^{\heartsuit_w} \subset \sC$ be its weight heart. Then:
\begin{enumerate}
\item There are canonical equivalences of connective spectra
\[
K^{\oplus}(\sC^{\heartsuit_w}) \simeq K^{WB}(\sC^{\heartsuit_w}) \simeq K^{WB}(\sC)
\]
\item There is a canonical morphism
\[
K^{WB}(\sC^{\heartsuit_w}) \to K^{\Sigma}((\sC^{\heartsuit_w})^{\mrm{fin}})
\]
which identifies as a connective cover.
\end{enumerate}
\end{thm}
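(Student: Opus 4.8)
The plan is to split part~(1) into its two comparisons and treat them independently, then assemble part~(2). For $K^{WB}(\sC^{\heartsuit_w}) \simeq K^{\oplus}(\sC^{\heartsuit_w})$ I would first check that the Waldhausen structure the heart inherits from $\sC$ is \emph{split}: an ingressive is a map $x \to y$ whose cofiber $z$ lies in $\sC^{\heartsuit_w}$, so the cofiber sequence $x \to y \to z$ has all three terms in the heart, and its classifying map $z \to x[1]$ is an element of $\Maps_{\sC}(z, x[1])$ with $z \in \sC_{w \le 0}$ and $x[1] \in \sC_{w \ge 1}$, hence zero by axiom~(2) of \defref{def:weight}; therefore $y \simeq x \oplus z$ and every ingressive is the inclusion of a direct summand. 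For a Waldhausen $\infty$-category whose cofibrations are exactly the split inclusions, the $S_\bullet$-construction computes the group completion of the symmetric monoidal groupoid of objects under $\oplus$ --- a standard consequence of the additivity theorem (cf.\ \cite{BarwickKtheory}, \cite{blumberg2013universal}, \cite{gepner-groth-nikolaus}) --- and that group completion is by definition $K^{\oplus}(\sC^{\heartsuit_w})$.

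For the comparison $K^{WB}(\sC^{\heartsuit_w}) \simeq K^{WB}(\sC)$ (induced by the inclusion, $\sC$ carrying its maximal pair structure) the content is that this exact functor of Waldhausen $\infty$-categories is a $K$-equivalence; this is the theorem of the heart for bounded weight structures, which I would import from \cite{vova-negative}. A self-contained argument would go: each object of $\sC$ admits a finite weight filtration with associated graded in shifts of $\sC^{\heartsuit_w}$, and one applies the Waldhausen additivity theorem to the (exact) $\infty$-category of weight-filtered objects to see that the restriction-to-the-heart functors induce equivalences on $K$-theory. The non-functoriality of weight truncations (\remref{rem:t}) is exactly why one must pass through $S_\bullet$ and additivity rather than writing a strict quasi-inverse, and managing this is the main technical obstacle in the whole statement --- but it is already handled in \cite{vova-negative}.

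For part~(2), the map is the composite $K^{WB}(\sC^{\heartsuit_w}) \to K^{WB}((\sC^{\heartsuit_w})^{\fin}) \xrightarrow{\eta} K((\sC^{\heartsuit_w})^{\fin})$, where the first arrow is induced by the Waldhausen functor $\sC^{\heartsuit_w} \hookrightarrow (\sC^{\heartsuit_w})^{\fin}$ (target with all maps ingressive; cf.\ \remref{rem:sw-pshfin}) and $\eta$ is the canonical comparison from connective Waldhausen--Barwick $K$-theory to the universal localizing invariant of \cite{blumberg2013universal}. Now $(\sC^{\heartsuit_w})^{\fin}$ carries a bounded weight structure by \remref{rem:constructing_weights} --- $\sC^{\heartsuit_w}$ is negatively self-orthogonal because its mapping spectra are connective --- so part~(1) applied to it identifies $K^{WB}(\sC^{\heartsuit_w})$ (after identifying the weight heart of $(\sC^{\heartsuit_w})^{\fin}$) with the connective $K$-theory of $(\sC^{\heartsuit_w})^{\fin}$; it then remains to invoke the Thomason--Trobaugh comparison (as packaged in \cite{blumberg2013universal}) that the map from connective $K$-theory of a stable $\infty$-category to its nonconnective $K$-theory is a connective cover, the negative homotopy groups being negative $K$-theory. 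This identification is cleanest, with no cofinality bookkeeping on $\pi_0$, precisely when $\sC$ is idempotent complete --- by \thmref{thm:vova} and \remref{rem:idem} one then has $(\sC^{\heartsuit_w})^{\fin} \simeq \sC$ --- which covers all of the applications ($\DM_{\mrm{gm}}$, $\Perf$ of stacks).
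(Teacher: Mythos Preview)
Your proposal is correct and matches the paper's argument: split ingressives in the heart give $K^{\oplus} \simeq K^{WB}$ (the paper cites \cite[Proposition~A.19]{aron} for this), and the heart inclusion is a $K$-equivalence (the paper cites \cite[Theorem~5.1]{fontes2019weight} or \cite[Theorem~A.15]{aron} rather than \cite{vova-negative}). For part~(2) the paper routes through $\sC$ itself --- using \thmref{thm:vova} to get $K(\sC) \simeq K((\sC^{\heartsuit_w})^{\fin})$ and then the connective-cover comparison $K^{WB}(\sC) \to K(\sC)$ of \cite{blumberg2013universal} --- rather than equipping $(\sC^{\heartsuit_w})^{\fin}$ with its own weight structure and reapplying part~(1); the ingredients are the same, and both routes implicitly use Morita invariance of nonconnective $K$-theory, exactly the idempotent-completion point you flag at the end.
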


\begin{proof} Using \cite[Theorem 5.1]{fontes2019weight} or \cite[Theorem A.15]{aron}, we have an equivalence of connective spectra
\[
K^{WB}(\sC) \simeq K^{WB}(\sC^{\heartsuit_w}).
\]
Using \cite[Proposition A.19]{aron}, noting that ingressives are split in $\sC^{\heartsuit_w}$, we have a further identification:
\[
K^{\oplus}(\sC^{\heartsuit_w}) \simeq K^{WB}(\sC^{\heartsuit_w}).
\]

For the second statement we note that, by Theorem~\ref{thm:vova}, we have an equivalence of nonconnective spectra:
\[
K(\sC) \simeq K((\sC^{\heartsuit_w})^{\mrm{fin}}).
\]
On the other hand, the connective version of $K$-theory of the stable $\infty$-category $\sC$ in the sense of \cite{blumberg2013universal} is, by construction, the same as $K^{WB}(\sC)$ with the maximal pair structure. Therefore, we have a natural map to the nonconnective $K$-theory of \cite{blumberg2013universal}
\[
K^{WB}(\sC) \rightarrow K(\sC)
\]
which witnesses a connective cover. Therefore, the map induced by the equivalence of the first part:
\[
K^{WB}(\sC^{\heartsuit_w}) \simeq K^{WB}(\sC) \rightarrow K((\sC^{\heartsuit_w})^{\mrm{fin}}),
\]
is indeed a connective cover.
\end{proof}


%
%
%
%
%
%
%
\let\mathbb=\mathbf

{\small
\bibliography{references}
}

\parskip 0pt

\end{document}